\documentclass[]{article}

\usepackage{setspace} 
\onehalfspacing  

\usepackage[backend=bibtex, style=alphabetic, minalphanames=10, maxalphanames=10, maxbibnames=10]{biblatex}

\usepackage{amsmath}
\usepackage{amsthm}
\usepackage{thmtools}
\usepackage{amssymb}
\usepackage{enumitem}
\usepackage{tikz-cd}
\usepackage{mathtools}
\usepackage{cleveref}
\usepackage{xcolor}
\usepackage{import}
\usepackage{graphicx}

\usepackage[a4paper, portrait]{geometry}
\newtheorem{theorem}{Theorem}
\numberwithin{theorem}{section} 
\newtheorem{lemma}[theorem]{Lemma}
\numberwithin{lemma}{section} 
\newtheorem{corollary}[theorem]{Corollary}
\numberwithin{corollary}{section} 
\newtheorem{proposition}[theorem]{Proposition}
\numberwithin{proposition}{section} 
\newtheorem{claim}[theorem]{Claim}
\numberwithin{claim}{section} 
\theoremstyle{definition}
\newtheorem{definition}[theorem]{Definition}
\numberwithin{definition}{section} 
\theoremstyle{definition}

\numberwithin{notation}{section} 
\theoremstyle{definition}
\newtheorem{remark}[theorem]{Remark}
\numberwithin{remark}{section} 
\theoremstyle{definition}

\numberwithin{example}{section} 
\theoremstyle{definition}
\newtheorem{question}[theorem]{Question}
\numberwithin{question}{section} 
\newtheorem{conjecture}[theorem]{Conjecture}
\numberwithin{conjecture}{section} 
\newtheorem{fact}[theorem]{Fact}
\numberwithin{fact}{section}

\newcommand{\uq}[1]{\mathcal{U}_q({#1})}
\newcommand{\Sl}[1]{\mathfrak{sl}_{#1}}

\newcommand{\Arr}{\mathrm{Arr}}
\newcommand{\Ed}{\mathrm{Ed}}
\newcommand{\tor}{\mathrm{tor}}

\newcommand{\Sym}{\mathrm{Sym}}
\newcommand{\hmu}{\hat{\mu}}
\newcommand{\uqslinf}{\uq{\widehat{\Sl{\infty}}}}
\newcommand{\uqsl}[1]{\uq{\widehat{\Sl{#1}}}}
\newcommand{\uqhat}[1]{\uq{\widehat{\mathfrak{#1}}}}
\newcommand{\uqtor}[1]{\uq{{\Sl{{#1,\tor}}}}}
\newcommand{\Oint}{\mathcal{O}_{\mathrm{int}}}
\newcommand{\preqchoose}{\genfrac{[}{]}{0pt}{}}
\newcommand{\qchoose}[2]{\preqchoose{#1}{#2}_q}
\newcommand{\Yuq}[1]{\mathcal{Y}(\uqhat{#1})}

\newcommand{\Ytor}[1]{\mathcal{Y}(\uqtor{#1})}
\newcommand{\Auq}[1]{\tilde{A}(\uqhat{#1})}

\newcommand{\Ysub}[1]{\mathcal{Y}_{#1}}
\newcommand{\Asub}[1]{\tilde{A}_{#1}}

\bibliography{ref.bib}

\title{Folding of cluster algebras and quantum toroidal algebras}
\author{Lior Silberberg}
\date{}

\begin{document}

\maketitle

\begin{abstract}
	In this paper, we study the relationship between the representation theory of the quantum affine algebra \(\uqslinf\) of infinite rank, and that of the quantum toroidal algebra \(\uqtor{2n}\). Using monoidal categorifications due to Hernandez-Leclerc and Nakajima, we establish a cluster-theoretic interpretation of the folding map \(\phi_{2n}\) of \(q\)-characters, introduced by Hernandez. To this end, we introduce a notion of foldability for cluster algebras arising from infinite quivers and study a specific case of cluster algebras of type \(A_\infty\). \par
	Using this interpretation of \(\phi_{2n}\), we prove a conjecture of Hernandez in new cases. Finally, we study a particular simple \(\uqtor{2n}\)-module whose \(q\)-character is not a cluster variable, and conjecture that it is imaginary. 
\end{abstract}

\setcounter{tocdepth}{1} 
\tableofcontents

\section{Introduction}
In the 1980s, Drinfeld and Jimbo associated to any complex symmetrizable Kac--Moody algebra \(\mathfrak{g}\) a Hopf algebra \(\uq{\mathfrak{g}}\) with generic \(q \in \mathbb{C}^*\), called the quantum Kac--Moody algebra associated to \(\mathfrak{g}\). In turn, to \(\uq{\mathfrak{g}}\), one can associate an affinization \(\uqhat{g}\). It is a \(\mathbb{C}\)-algebra which is in general not known to be a Hopf algebra. Nonetheless, the category \(\Oint(\uqhat{\mathfrak{g}})\) of integrable \(\uqhat{g}\)-modules admits a fusion product, which equips its Grothendieck group with a commutative ring structure, see \cite{H05,H07}. \par 
One important notion in the representation theory of affinized quantum Kac--Moody algebras is that of \(q\)-characters, first introduced by Frenkel and Reshetihkin in \cite{FR} and extended to the general case in \cite{H05}. In brief, the \(q\)-character of a module \(V\) in \(\Oint(\uqhat{g})\) is (in general) an infinite sum that encodes the dimensions of its \(l\)-weight spaces. \par
For a complex finite-dimensional simple Lie algebra \(\mathfrak{g}_\mathrm{fin}\), let \(\mathfrak{g}\) be the associated affinized Lie algebra. It turns out that \(\uq{\mathfrak{g}} \simeq \uq{\hat{\mathfrak{g}}_{\mathrm{fin}}}\), and these are the ``smallest" examples of affinized Kac--Moody algebras. We call the algebras arising in this way \emph{quantum affine algebras}. They have been studied extensively in the past few decades, see for example \cite{CP91,CP95,FR,N01,FM01}. \par
For an affine Lie algebra \(\mathfrak{g}\), however, the ``double affine" quantum algebra \(\uqhat{g}\) is not isomorphic to any quantum Kac--Moody algebra. We call these algebras \emph{quantum toroidal algebras}. They were first introduced in type \(A\) in \cite{GKV}. Their structure and representation theory are considerably more complex than those of the quantum affine algebras. For example, only recently a tensor product for their integrable representations was defined, see \cite{Lau25}. \par
One may ask the following natural question:
\begin{question}\label{main-question}
	Can we use the well studied representation theory of quantum affine algebras in order to understand the representation theory of quantum toroidal algebras?
\end{question}
A first result in this direction was given in \cite{H-folding}. In this paper, Hernandez regards the Dynkin diagram of affine type \(A_n^{(1)}\) as a folding of the bi-infinite Dynkin diagram \(A_\infty\) (infinite in both directions). Although there is no direct way to ``fold" the quantum affine algebra \(\uqslinf\) onto the quantum toroidal algebra \(\uqtor{n+1} \coloneq \uq{\widehat{\widehat{\Sl{n+1}}}}\), he proves that there is a homomorphism \(\phi_n\) that ``folds" \(q\)-characters of integrable \(\uqslinf\)-modules onto virtual \(q\)-characters of integrable \(\uqtor{n+1}\)-modules. Moreover, he proves that \(\phi_n\) maps \(q\)-characters of Kirillov--Reshetikhin \(\uqslinf\)-modules to \(q\)-characters of Kirillov--Reshetikhin \(\uqtor{n+1}\)-modules. Finally, he gives the following conjecture.
\begin{conjecture}[Conjecture 5.3 in \cite{H-folding}]\label{conjecture-intro}
	The homomorphism \(\phi_n\) maps \(q\)-characters of integrable \(\uqslinf\)-modules to \(q\)-characters of actual (non-virtual) integrable \(\uqtor{n+1}\)-modules. 
\end{conjecture}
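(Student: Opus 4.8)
The plan is to route $\phi_{2n}$ through monoidal categorifications on both sides and exhibit it, after identifying Grothendieck rings with cluster algebras, as a \emph{folding of cluster algebras}. First I would fix a monoidal subcategory $\mathcal{C}_\infty \subset \Oint(\uqslinf)$ that contains the Kirillov--Reshetikhin modules and is closed under the relevant fusion products, and invoke a Hernandez--Leclerc / Nakajima-type monoidal categorification to identify its Grothendieck ring with a cluster algebra $\mathcal{A}_\infty$ of type $A_\infty$, in such a way that the classes of the \emph{real} simple modules are the cluster monomials, the fundamental modules give an initial seed, and multiplication is the fusion product. Dually, I would realize a suitable monoidal subcategory $\bar{\mathcal{C}}$ of $\Oint(\uqtor{2n})$ so that its Grothendieck ring is a cluster algebra $\bar{\mathcal{A}}$, again with real simples corresponding to cluster monomials.

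Next comes the combinatorial heart of the argument. The translation by $2n$ on the bi-infinite $A_\infty$ Dynkin diagram, whose quotient is the affine diagram $A_{2n-1}^{(1)}$ underlying $\uqtor{2n}$, lifts to an admissible automorphism of the (infinite) quiver that defines $\mathcal{A}_\infty$. I would verify that this quiver is \emph{foldable} in the sense developed in the paper --- that forming orbit sums of cluster variables commutes with mutation in the directions fixed by the symmetry --- which produces a surjection $\mathcal{A}_\infty \twoheadrightarrow \bar{\mathcal{A}}$ carrying cluster monomials to cluster monomials, up to frozen factors. The step I expect to be the most delicate is then to prove that this cluster-algebraic folding map \emph{coincides} with Hernandez's $\phi_{2n}$ under the two categorification isomorphisms. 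I would do this by checking agreement on generators --- the images of Kirillov--Reshetikhin (hence fundamental) modules already match by the explicit computation of \cite{H-folding} --- and then propagating the identity to all cluster monomials using that both maps are ring homomorphisms and that the cluster side is rigidly determined by an initial seed together with the exchange relations, which on the representation-theoretic side are the truncated $q$-character identities among Kirillov--Reshetikhin modules that $\phi_{2n}$ respects.

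Granting this commuting square, the conjecture follows for every integrable $\uqslinf$-module lying in $\mathcal{C}_\infty$ \emph{whose $q$-character is a cluster monomial of} $\mathcal{A}_\infty$: its image under $\phi_{2n}$ is then a cluster monomial of $\bar{\mathcal{A}}$, which by the toroidal categorification is the $q$-character of an actual simple $\uqtor{2n}$-module, in particular non-virtual; multiplicativity of $\phi_{2n}$ extends this to arbitrary fusion products of such modules. This strictly enlarges the Kirillov--Reshetikhin case already handled by Hernandez, and is the precise sense in which the conjecture is proved ``in new cases''.

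\textbf{The main obstacle}, and the reason this does not settle the conjecture in full, is the possible presence of \emph{imaginary} simple modules --- simple $\uqslinf$-modules whose classes are not cluster monomials of $\mathcal{A}_\infty$. For such modules the categorification dictionary gives no handle on $\phi_{2n}$, and indeed the final part of the paper exhibits a simple $\uqtor{2n}$-module whose $q$-character is not even a cluster variable and conjectures it to be imaginary. So the cluster-algebraic method reduces the full conjecture to controlling the imaginary locus on both sides rather than eliminating it. A further, foundational difficulty sitting underneath everything is that all of this must be carried out for \emph{infinite} quivers and infinite-rank cluster algebras, which is why a notion of foldability tailored to cluster algebras of type $A_\infty$ has to be set up before any of the above can be made precise.
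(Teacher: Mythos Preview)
Your high-level strategy matches the paper's: categorify both sides as cluster algebras, fold $\mathcal{A}_\infty$ onto $\mathcal{A}_{2n,\tor}$ via the shift by $2n$, and identify the folding map with $\phi_{2n}$. But there is a genuine gap. You assert that folding ``carries cluster monomials to cluster monomials'' and conclude the conjecture for every simple in $\mathcal{C}_\infty$ whose class is a cluster monomial. Folding, however, only sends cluster variables lying in \emph{orbit-clusters} --- clusters reachable from the initial seed by sequences of \emph{orbit}-mutations --- to cluster variables of the folded algebra. Not every cluster variable of $\mathcal{A}_\infty$ lies in an orbit-cluster: for a positive root $\alpha$, the variable $x[\alpha]$ belongs to one iff $\alpha$ has odd length or even length $<2n$, so e.g.\ $x[\alpha_{1,2n}]$ does not. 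This orbit-cluster restriction is the crux of Theorems~\ref{folding-theorem-intro} and~\ref{theorem-ice-folding-intro}, and you omit it entirely. Your diagnosis of the obstacle is correspondingly off: in $\mathcal{C}_1(\uqslinf)$ there are \emph{no} imaginary modules --- every simple is already a cluster monomial by Hernandez--Leclerc in type $A$ --- so the obstruction is not imaginaries on the $\uqslinf$ side but precisely the cluster variables that fail to sit in any orbit-cluster. The imaginary module $L(\overline{m_{1,2n}})$ discussed at the end of the paper lives on the \emph{toroidal} side, and it arises because $\phi_{2n}$ applied to the non-orbit variable $\chi_q^\infty(m_{\alpha_{1,2n}})$ is not a simple $q$-character.

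You also underestimate one technical step. That the full \emph{ice} quiver $\Gamma_\infty$ (with its frozen vertices) is strongly globally foldable is not a purely combinatorial verification. The mutable part is handled by a surface model (triangulations of an infinite ribbon, Theorem~\ref{folding-theorem}), but controlling the arrows to frozen vertices after orbit-mutation requires a $q$-character argument (Lemma~\ref{lemma-frozen-determined} on $\mu$-dominated elements) comparing the exchange relations in $\mathcal{A}_\infty$ and $\mathcal{A}_{2n,\tor}$. In particular, the foldability of $\Gamma_\infty$ and the identity $\psi=\phi_{2n}$ on orbit-cluster variables are proved \emph{simultaneously} by induction on orbit-mutations (Theorem~\ref{theorem-ice-folding}), not by first establishing foldability combinatorially and then checking $\psi=\phi_{2n}$ on Kirillov--Reshetikhin generators and propagating, as you propose.
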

The idea of relating the \(q\)-character theory of different quantum groups appears also in \cite{H10kr, W23}, where it was used to obtain new multiplication formulas. Similar ideas not directly involving \(q\)-characters can be found in \cite{KKKO}. \par
An important notion in the representation theory of quantum affine algebras (and more generally affinized Kac--Moody algebras) is that of \emph{monoidal categorification}. It was introduced by Hernandez and Leclerc in \cite{HL10}, and studied further in \cite{N11, HL13, Q17, BriCha19, KKOP, KKOP2}, to name a few. In brief, the idea is to study an abelian monoidal category \(\mathcal{C}\) by identifying its Grothendieck ring \(\mathcal{R}(\mathcal{C})\) with a cluster algebra \(\mathcal{A}\), in such a way that cluster monomials in \(\mathcal{A}\) correspond to classes of real simple objects in \(\mathcal{C}\). An object is said to be a real simple object if its tensor product with itself is still simple. The tensor (or fusion) product of integrable \(\uq{\mathfrak{g}}\)-modules provides many examples of categories which are monoidal categorifications of cluster algebras. \par
In this paper, we build on the results of \cite{H-folding} and establish that we can understand \(\phi_n\) in a cluster-theoretic way. We first introduce a category \(\mathcal{C}_1\) of \(\uqslinf\)-modules which gives a monoidal categorification of a cluster algebra \(\mathcal{A}_\infty\), associated to an ice quiver \(\Gamma_\infty\) with principal part of type \(A_\infty\) with alternating orientation. We then introduce a notion of folding of cluster algebras suitable for (locally finite) infinite quivers with a group action, which we call \emph{strong global foldability}. With this concept in hand, we get special sets of cluster variables which we call \emph{orbit-clusters}, which correspond to the clusters of the folded cluster algebra. Our first main result is the following: let \(Q\) be a quiver whose underlying diagram is of type \(A_{n}^{(1)}\) (so it is an orientation of an \((n+1)\)-cycle), and let \(A_Q\) be the orientation of \(A_\infty\) corresponding to \(Q\) (see \Cref{section_folding_Ainf} for details). Let also \(G\) be the group of permutations of \((A_\infty)_0\) generated by shifting the vertices by \(n+1\), so that \(A_Q^G = Q\).
\begin{theorem}\label{folding-theorem-intro}
	In the setting above, the quiver \(A_Q\) is strongly globally foldable with respect to \(G\) if and only if \(Q\) is not cyclically oriented.  
\end{theorem}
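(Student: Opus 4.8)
\emph{Strategy.} I would prove the two implications separately, with essentially all the difficulty in the ``if'' direction. Write $m = n+1$, so that $m \ge 3$ and $G = \langle \sigma \rangle$, where $\sigma$ shifts $(A_\infty)_0 = \mathbb{Z}$ by $m$; the $G$-orbits are then exactly the residue classes $O_0, \dots, O_{m-1}$ modulo $m$. By definition (see \Cref{section_folding_Ainf}), strong global foldability of $A_Q$ with respect to $G$ requires in particular that no quiver obtained from $A_Q$ by a finite sequence of orbit-mutations (mutations at $G$-orbits) have an arrow joining two vertices of a common $G$-orbit — so that every orbit-mutation along the way is defined and the folded quiver stays an orientation of the $m$-cycle — and conversely, once this holds, that the resulting orbit-clusters match the clusters of the cluster algebra of $Q$. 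For $A_Q$ itself this non-adjacency is free of charge, since $A_\infty$ has arrows only between consecutive vertices while consecutive members of an orbit differ by $m \ge 3$; so the whole question is whether the property survives orbit-mutation.

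\emph{The ``only if'' direction.} Assume $Q$ is cyclically oriented. Unrolling the $m$-cycle shows $A_Q$ is the monotone quiver $\cdots \to v_{-1} \to v_0 \to v_1 \to \cdots$. I claim that applying the orbit-mutations $\mu_{O_0}, \mu_{O_1}, \dots, \mu_{O_{m-2}}$ in this order to $A_Q$ produces an arrow $v_{mk-1} \to v_{m(k+1)-1}$ for every $k \in \mathbb{Z}$; since $v_{mk-1}$ and $v_{m(k+1)-1}$ are distinct vertices of the orbit $O_{m-1}$, this shows $A_Q$ is not strongly globally foldable. The claim is a direct induction on $j$: $\mu_{O_0}$ reverses the arrows at each $v_{mk}$ and creates the arrows $v_{mk-1} \to v_{mk+1}$, leaving $O_1$ totally disconnected; and if, after $\mu_{O_0}, \dots, \mu_{O_{j-1}}$, one has the arrows $v_{mk-1} \to v_{mk+j}$ together with the still-present original arrows $v_{mk+j} \to v_{mk+j+1}$, then $\mu_{O_j}$ creates, through the two-step paths $v_{mk-1} \to v_{mk+j} \to v_{mk+j+1}$, the arrows $v_{mk-1} \to v_{mk+j+1}$; for $j \le m-3$ one checks $O_{j+1}$ is still totally disconnected (all new arrows at its vertices come from such paths), so the next orbit-mutation is defined. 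At $j = m-2$ the source $v_{mk-1}$ of the new arrow lies in $O_{m-1}$ and its target $v_{mk+m-1} = v_{m(k+1)-1}$ lies in the same orbit, which is the asserted obstruction.

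\emph{The ``if'' direction.} Now $Q$ is an acyclic orientation of the $m$-cycle, so it has at least one source and at least one sink; correspondingly $A_Q$ is a periodic (period $m$) orientation of $A_\infty$ having, inside every window of $m$ consecutive vertices, a local source and a local sink. Here the plan is to run an induction over \emph{all} orbit-mutation sequences simultaneously, using the combinatorial description of the mutation class of $A_Q$ set up in \Cref{section_folding_Ainf} (which one may picture as a periodic triangulation of an infinite strip — the universal cover of the annulus with $p$ and $q$ boundary marked points, $p+q=m$ and $p,q\ge1$, that models the cluster algebra of $Q$ — with $G$ acting by the deck transformation and orbit-mutations corresponding to $G$-equivariant families of flips). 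The key lemma is that no arrow ever joins two vertices of a common orbit; I would prove it by maintaining a stronger locality invariant along every orbit-mutation sequence — for instance that every arrow $v_i \to v_j$ satisfies $|i-j| < m$ — which holds for $A_Q$ and which I claim is preserved under orbit-mutation precisely because of the local source and sink in each period. Granting this, each orbit stays totally disconnected, every orbit-mutation is defined, and — reading off the model, the arcs of a strip having bounded support — each folded quiver is a genuine orientation of the $m$-cycle; the identification of the orbit-clusters with the clusters of the cluster algebra of $Q$ then goes through as in the general folding formalism, giving strong global foldability.

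\emph{Main obstacle.} The heart of the matter is the locality lemma. In the inductive step a new arrow produced by $\mu_{O_l}$ arises from a two-step path through $O_l$, and one must bound its span below $m$; the monotone computation of the ``only if'' direction shows the span genuinely grows without bound when no local source or sink is available, so the argument must exploit that structure — which is exactly what a non-cyclic orientation provides and a cyclic one does not. Making this rigorous needs a sufficiently tight combinatorial model for the infinite-rank mutation class of $A_Q$, and that, rather than the largely computational converse, is where the real work lies.
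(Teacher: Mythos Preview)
Your proposal has a genuine conceptual gap: throughout, you treat ``no arrow joins two vertices of a common $G$-orbit'' (condition~\ref{virt-loop} of \Cref{G-admissible}) as the obstruction to strong global foldability. But by \Cref{mutation-behavior}, conditions~\ref{G-inv}--\ref{virt-loop} are \emph{automatically} preserved by orbit-mutation of a strongly $G$-admissible quiver; the only thing that can fail is condition~\ref{virt-cyc}, the absence of \emph{virtual $2$-cycles} $\Arr(Gv_1,v_2)\cdot\Arr(v_2,Gv_1)=0$. You never mention virtual $2$-cycles, and this matters in both directions.

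For ``only if'', your explicit sequence does break foldability, but one step earlier and for a different reason: after $\hat\mu_{O_{m-3}}\cdots\hat\mu_{O_0}$ you already have $v_{mk-1}\to v_{mk+m-2}$ (in $O_{m-1}\to O_{m-2}$) together with the original $v_{mk+m-2}\to v_{mk+m-1}$ (in $O_{m-2}\to O_{m-1}$), a virtual $2$-cycle violating condition~\ref{virt-cyc}. Your step $\hat\mu_{O_{m-2}}$ is then not defined in the paper's framework, and in any case \Cref{mutation-behavior} forbids the within-orbit arrow you assert while the previous quiver is still admissible. The paper takes a completely different route here: it invokes that a cyclically oriented $n$-cycle is mutation-equivalent to a $D_n$ tree, and then \Cref{tree-proposition} forces any strongly $G$-admissible unfold of a tree to be finite, a contradiction.

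For ``if'', you correctly point to the strip/annulus model (which is exactly what the paper uses), but your proposed invariant $|i-j|<m$ for every arrow is aimed at the wrong target. Even granting it, it does not rule out virtual $2$-cycles: arrows $v_a\to v_b$ and $v_b\to v_c$ with $a\equiv c\pmod m$ and $|a-b|,|b-c|<m$ are perfectly compatible with $|a-c|=m$. (Your side remark that the folded quiver ``stays an orientation of the $m$-cycle'' is also incorrect: the fold ranges over the full mutation class of $Q$, i.e.\ all annulus triangulation quivers, which include double arrows and non-cycle shapes.) The paper instead argues \emph{directly} in the $\Sigma$-stable strip model that a virtual $2$-cycle would force two arcs $\beta_1,\beta_2=\Sigma^a\beta_1$ sharing an endpoint and bounding a rectangle, and rules this out by a short geometric case analysis on the fourth vertex of that rectangle. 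Redirecting your argument to exclude virtual $2$-cycles --- using the same geometric model you already invoked --- is what is missing.
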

This first result suggests that there is a strong connection between the cluster algebra of type \(A_\infty\) and the cluster algebras of type \(A_{n}^{(1)}\). \par 
In \cite{N11}, Nakajima constructs a category \(\mathcal{C}_1\) for any simply-laced symmetric affinized quantum Kac--Moody algebra, whose Dynkin diagram has no odd cycles. He shows it provides a monoidal categorifications of an associated cluster algebra. For the particular case of \(\uqtor{2n}\), the monoidal categorification is of a cluster algebra \(\mathcal{A}_{2n}\) associated to an ice quiver \(\Gamma_{2n}\), whose principal part is of type \(A_{2n-1}^{(1)}\) with alternating orientation. \par
We show that there is a group \(G\) acting on the vertices of \(\Gamma_\infty\), such that \(\Gamma_\infty^G = \Gamma_{2n}\). We then prove the following result, which is the main theorem of the paper (in what follows, we identify classes of modules in the Grothendieck ring with their \(q\)-characters). 
\begin{theorem}\label{theorem-ice-folding-intro}
	The ice quiver \(\Gamma_\infty\) is strongly globally foldable with respect to \(G\). \par
	If \(\chi_q(L)\) is a cluster variable belonging to an orbit-cluster of \(\mathcal{A}_\infty\), then \(\phi_{2n}(\chi_q(L))\) is a cluster variable of \(\mathcal{A}_{2n}\). Each cluster variable of \(\mathcal{A}_{2n}\) can be obtained in this way. \par
	In particular, \(\phi_{2n}(\chi_q(L))\) is the \(q\)-character of a simple \(\uqtor{2n}\)-module.
\end{theorem}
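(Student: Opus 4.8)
The plan is to split the statement into its three assertions and prove them in order: strong global foldability of \(\Gamma_\infty\), then the fact that \(\phi_{2n}\) realizes the folding of clusters, and finally simplicity of the target module, which will be essentially a corollary.

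\emph{Foldability of \(\Gamma_\infty\).} The principal part of \(\Gamma_\infty\) is a copy of \(A_\infty\) carrying the alternating orientation. Since \(G\) is generated by the shift by \(2n\) and \(2n\) is even, this orientation is \(G\)-equivariant and descends to the bipartite (sink--source) orientation of the \(2n\)-cycle \(A_{2n-1}^{(1)}\), which is not cyclically oriented. Hence \Cref{folding-theorem-intro} applies and the principal part of \(\Gamma_\infty\) is strongly globally foldable with respect to \(G\), with quotient the principal part of \(\Gamma_{2n}\). One then upgrades this to the ice quiver: from the explicit description of \(\Gamma_\infty\) the frozen vertices form a \(G\)-stable set, the arrows incident to them are \(G\)-equivariant, and — because distinct vertices of a single \(G\)-orbit are at distance \(2n \ge 2\) in \(A_\infty\), hence never adjacent and never made adjacent by a mutation at another vertex of the orbit — no orbit-mutation creates a \(2\)-cycle inside a \(G\)-orbit. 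Since the frozen vertices are never mutated, the orbit-mutation dynamics of the mutable part is unaffected, so strong global foldability lifts from the principal part to \(\Gamma_\infty\), with \(\Gamma_\infty^G = \Gamma_{2n}\).

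\emph{The initial orbit-cluster.} By the monoidal categorification of \(\mathcal{A}_\infty\) by \(\mathcal{C}_1\) (building on \cite{HL10}), the initial cluster of \(\mathcal{A}_\infty\) consists of the \(q\)-characters of a distinguished family of Kirillov--Reshetikhin \(\uqslinf\)-modules indexed by the mutable vertices of \(\Gamma_\infty\); likewise, by Nakajima's categorification \cite{N11}, the initial cluster of \(\mathcal{A}_{2n}\) consists of \(q\)-characters of Kirillov--Reshetikhin \(\uqtor{2n}\)-modules indexed by the mutable vertices of \(\Gamma_{2n}\). By the main results of \cite{H-folding}, \(\phi_{2n}\) carries the \(q\)-character of a Kirillov--Reshetikhin \(\uqslinf\)-module to that of the corresponding Kirillov--Reshetikhin \(\uqtor{2n}\)-module; comparing the two indexings along the quotient map on vertices, one sees that \(\phi_{2n}\) maps the initial orbit-cluster of \(\mathcal{A}_\infty\) onto the initial cluster of \(\mathcal{A}_{2n}\), collapsing each \(G\)-orbit of initial cluster variables to a single cluster variable of \(\mathcal{A}_{2n}\). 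This is the base case of an induction on the number of orbit-mutations separating an orbit-cluster from the initial one.

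\emph{Mutation compatibility — the main step.} For the inductive step one must show that if a \(G\)-stable seed \((\mathbf{x},\Gamma_\infty)\) of \(\mathcal{A}_\infty\) has the property that \(\phi_{2n}\) collapses each \(G\)-orbit of \(\mathbf{x}\) to a single cluster variable of \(\mathcal{A}_{2n}\) and maps the underlying orbit-cluster onto a cluster of \(\mathcal{A}_{2n}\), then the same holds after an orbit-mutation at a \(G\)-orbit \(O\) of mutable vertices. As \(\phi_{2n}\) is a ring homomorphism and each exchange relation is an identity of the form \(x_k x_k' = M_+ + M_-\) with \(M_\pm\) monomials in the remaining (cluster and frozen) variables, this reduces to checking that the family of exchange relations at the vertices of \(O\) is transported by \(\phi_{2n}\) to the single exchange relation of \(\Gamma_{2n}\) at the vertex \(\bar O\); equivalently, that the folding of Drinfeld/\(q\)-character monomials defining \(\phi_{2n}\) is compatible with the folding of the exchange matrix of \(\Gamma_\infty\) underlying strong global foldability — the well-definedness of the orbit-mutation having been recorded above. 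A short computation with the folded exchange relation then shows that the new cluster variables produced within \(O\) all share the same \(\phi_{2n}\)-image. I expect this to be the principal obstacle: the genuinely new ingredient is precisely the verification that the two ``foldings'' — the one on monomials built into \(\phi_{2n}\) and the one on exchange matrices built into strong global foldability — coincide (with due care, in the infinite setting, to the completions in which the relevant \(q\)-characters and cluster variables live).

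\emph{Conclusion.} With the induction complete, every orbit-cluster of \(\mathcal{A}_\infty\) is mapped by \(\phi_{2n}\) onto a cluster of \(\mathcal{A}_{2n}\), and each \(G\)-orbit of cluster variables occurring there is collapsed to a single cluster variable of \(\mathcal{A}_{2n}\); in particular \(\phi_{2n}(\chi_q(L))\) is a cluster variable of \(\mathcal{A}_{2n}\) whenever \(\chi_q(L)\) lies in an orbit-cluster. Conversely, since \(\Gamma_{2n}\) is connected, every cluster of \(\mathcal{A}_{2n}\) is obtained from the initial one by a finite sequence of mutations, each of which lifts, by strong global foldability, to an orbit-mutation of \(\mathcal{A}_\infty\); applying this sequence to the initial orbit-cluster yields an orbit-cluster whose \(\phi_{2n}\)-image is the prescribed cluster, so every cluster variable of \(\mathcal{A}_{2n}\) is of the form \(\phi_{2n}(\chi_q(L))\). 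Finally, by Nakajima's monoidal categorification \cite{N11} every cluster variable of \(\mathcal{A}_{2n}\) is the \(q\)-character of a real simple object of the category \(\mathcal{C}_1\) for \(\uqtor{2n}\); applied to \(\phi_{2n}(\chi_q(L))\), which we have just shown is such a cluster variable, this gives that it is the \(q\)-character of a simple \(\uqtor{2n}\)-module, completing the proof.
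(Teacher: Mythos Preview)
There is a genuine gap in your first step. You claim that strong global foldability of the ice quiver \(\Gamma_\infty\) ``lifts from the principal part'' because ``the frozen vertices are never mutated, the orbit-mutation dynamics of the mutable part is unaffected.'' This is not a valid argument. What must be checked is condition~\ref{virt-cyc} of \Cref{G-admissible} for pairs consisting of a mutable vertex and a frozen orbit: after an arbitrary sequence of orbit-mutations at mutable orbits, one needs \(\Arr(Gi',j)\cdot\Arr(j,Gi') = 0\) for every mutable \(j\) and frozen orbit \(Gi'\). Mutating at a mutable vertex \(k\) does change the arrows between other mutable vertices and frozen vertices (via composed paths through \(k\)), so there is no a priori reason why a virtual \(2\)-cycle between a mutable vertex and a frozen orbit cannot appear. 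Your remark that ``distinct vertices of a single \(G\)-orbit are at distance \(2n\ge 2\)'' only addresses virtual loops (condition~\ref{virt-loop}), which \Cref{mutation-behavior} already handles automatically; it says nothing about virtual \(2\)-cycles between two different orbits.

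In the paper this is exactly the hard point, and it is \emph{not} proved combinatorially. The argument runs the other way from what you expect: the paper first uses the monoidal categorifications on both sides (for \(\uqslinf\) and for \(\uqtor{2n}\)) to control the frozen exponents in the exchange relations. Concretely, one writes the exchange relation at a vertex \(i\) of \(\Gamma_\infty\) and at \([i]\) of \(\tilde\Gamma_{2n}\), applies \(\phi_{2n}\) to the first, and compares. From \Cref{folding-theorem} the mutable exponents already match; \Cref{mutation-behavior} gives \(\sum_{s\in[j]} b_s \ge b_{[j]}\) and \(\sum_{s\in[j]} d_s \ge d_{[j]}\) for the frozen exponents, and the crux is to prove equality. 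This is where the technical \Cref{lemma-frozen-determined} enters: both \(\phi_{2n}(X_i')\) and \(\tilde X_{[i]}'\) are ``\(\mu\)-dominated'' in \(\Ysub{2n}\), and the lemma forces one of the two frozen monomials to agree, after which \Cref{remark-vitrual-2-cycles} gives the other. Only \emph{then} does strong \(G\)-admissibility of the orbit-mutated ice quiver follow. So the representation theory is used to establish foldability, not the reverse; your ``main step'' (compatibility of \(\psi\) and \(\phi_{2n}\)) is, once foldability is in hand, the routine computation of \Cref{proposition-cluster-folding}, whereas the step you dismissed in one sentence is where the real content lies.
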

The theorem has the following consequences:
\begin{enumerate}
	\item The map \(\phi_{2n}\) is more than just a projection of \(q\)-characters, it relates the cluster structures associated to \(\uqslinf\) and \(\uqtor{2n}\). 
	\item We can use some known multiplicative formulas for \(q\)-characters of \(\uqslinf\) to deduce analogous ones for \(\uqtor{2n}\). 
	\item If \(L\) is a simple \(\uqslinf\)-module whose class belongs to an orbit-cluster in \(\mathcal{A}_\infty\), then \(\phi_{2n}(\chi_q(L))\) is the \(q\)-character of a simple \(\uqtor{2n}\)-module. In particular, we prove \Cref{conjecture-intro} in this case. These are the first examples of modules satisfying \Cref{conjecture-intro} outside the Kirillov--Reshetikhin case.
\end{enumerate} 
As an application, we compute some \(q\)-characters for \(\uqtor{2n}\)-modules that do not correspond to cluster variables. We obtain a few more examples of non-Kirillov--Reshetikhin modules that satisfy \Cref{conjecture-intro}. As a particular case, we study a simple \(\uqtor{2n}\)-module, which we denote by \(L(\overline{m_{1,2n}})\), and we conjecture the following.
\begin{conjecture}\label{imag-conj-intro}
	The simple \(\uqtor{2n}\)-module \(L(\overline{m_{1,2n}})\) is an imaginary module, that is, the fusion product \(L(\overline{m_{1,2n}})\ast L(\overline{m_{1,2n}})\) is not simple. 
\end{conjecture}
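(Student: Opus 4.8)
The plan is to prove \Cref{imag-conj-intro} by a direct computation with \(q\)-characters. Write \(m := \overline{m_{1,2n}}\), so that \(m^2\) is a dominant monomial occurring in \(\chi_q(L(m))^2\). Since \(\chi_q\) is a ring homomorphism for the fusion product, \(\chi_q\!\big(L(m)\ast L(m)\big) = \chi_q(L(m))^2\); and since a simple \(l\)-highest-weight module over a quantum affinization — in particular over \(\uqtor{2n}\) — has a \emph{unique} dominant monomial in its \(q\)-character, it is enough to exhibit a single dominant monomial \(m' \neq m^2\) lying in \(\Supp\!\big(\chi_q(L(m))^2\big)\). Indeed, if \(L(m)\ast L(m)\) were simple its \(q\)-character, namely \(\chi_q(L(m))^2\), would have a unique dominant monomial, whereas \(m^2\) and \(m'\) would be two of them, a contradiction. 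Note that once such an \(m'\) is produced there is nothing further to check: \(q\)-characters have non-negative coefficients, so the coefficient of \(m'\) in the square is automatically positive and cannot cancel.

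Concretely, I would first determine \(\Supp(\chi_q(L(m)))\) explicitly — from the explicit description of \(\chi_q(L(\overline{m_{1,2n}}))\) obtained in this paper, or by running the Frenkel--Mukhin algorithm for \(\uqtor{2n}\) directly and cross-checking it against \(\phi_{2n}\) applied to the relevant \(\uqslinf\) \(q\)-character (\(\overline{m_{1,2n}}\) being by construction the \(\phi_{2n}\)-image of a monomial \(m_{1,2n}\) of \(\uqslinf\)). Then I would search among products \(m_1 m_2\) with \(m_1,m_2 \in \Supp(\chi_q(L(m)))\) for one that is dominant and distinct from \(m^2\). The natural candidate is \(m\cdot\widetilde m\) with \(\widetilde m = m\,A_{i,a}^{-1}\) a monomial obtained from \(m\) by a single lowering step (equivalently the monomial \(m^2 A_{i,a}^{-1}\) of the square), or more generally the product of \(m\) with a low-lying monomial reached by a short chain of lowerings: since \(m^2\) has more ``room'' in its \(Y\)-exponents than \(m\), one expects that a short chain of lowerings starting from \(m^2\) stays within the dominant monomials, and one only needs to hit one of them.

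An alternative, more structural route goes through the monoidal categorification. By Nakajima's theorem the category \(\mathcal{C}_1\) for \(\uqtor{2n}\) categorifies \(\mathcal{A}_{2n}\), and — in the strong form of the Hernandez--Leclerc correspondence — the classes of real simple modules are precisely the cluster monomials, so that the class of a real \emph{prime} simple module is a cluster variable. Hence, if one proves that \(L(\overline{m_{1,2n}})\) is a prime simple module, i.e.\ does not factor as a fusion product of two non-trivial simples (this might be read off from the shape of its highest \(l\)-weight, or from a prime-factorisation criterion for \(q\)-characters), then, as \(\chi_q(L(\overline{m_{1,2n}}))\) is not a cluster variable, \(L(\overline{m_{1,2n}})\) cannot be a real prime simple, and being prime it must be imaginary.

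The main difficulty in the direct approach is to obtain \(\chi_q(L(\overline{m_{1,2n}}))\) in a form uniform in \(n\), explicit enough both to exhibit the offending dominant monomial and to be certain that it is not \(m^2\); it is quite possible that no monomial obtained from \(m\) by a single lowering multiplies with \(m\) to a dominant monomial, forcing one deeper into the \(q\)-character, where the combinatorics of the Frenkel--Mukhin procedure for a quantum toroidal algebra becomes delicate. A reasonable first step is to settle the smallest case \(n = 1\) (that is, \(\uqtor{2}\)), where \(\chi_q(L(\overline{m_{1,2}}))\) can be written out in full, to pinpoint the bad dominant monomial there, and then attempt to propagate it to arbitrary \(n\). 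For the structural route the obstacle is exactly the piece of the Hernandez--Leclerc/Nakajima picture that is not known for quantum toroidal algebras — that every prime real simple object of \(\mathcal{C}_1\) is a cluster variable — which, together with primeness of \(L(\overline{m_{1,2n}})\), is the real crux and is presumably why the assertion is left as a conjecture.
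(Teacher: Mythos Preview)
The statement you are attempting to prove is a \emph{conjecture} in the paper, and the paper does not prove it. What the paper establishes (for \(n\geq 3\)) is the conditional implication \Cref{conjecture-intro} \(\Rightarrow\) \Cref{imag-conj-intro}: by pushing exchange relations of \(\mathcal{A}_\infty\) through \(\phi_{2n}\) and comparing, one obtains
\[
\chi_q^\tor(\overline{m_{1,2n}})^2 = \chi_q^\tor(\overline{m_{1,4n}}) + \lambda\, f
\]
for some \(\lambda\in\{0,1,2\}\) and a monomial \(f\) in frozen variables, together with the observation that \(\lambda=0\) forces \(\overline{\chi_q^\infty(m_{1,4n})}\) to be a genuinely \emph{virtual} \(q\)-character, contradicting \Cref{conjecture-intro}. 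So the paper's route passes through the folding map and cluster exchange relations, not through a direct inspection of \(\chi_q(L(\overline{m_{1,2n}}))\).

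Your direct approach rests on a false premise: it is \emph{not} true that a simple \(l\)-highest-weight module over a quantum affinization has a unique dominant monomial in its \(q\)-character. This already fails for ordinary quantum affine algebras --- it is precisely the reason the Frenkel--Mukhin algorithm is not guaranteed to output the full \(q\)-character of an arbitrary simple module --- and there is no reason to expect it in the toroidal setting. Consequently, exhibiting a second dominant monomial in \(\chi_q(L(m))^2\) does not prove that \(L(m)\ast L(m)\) is reducible; one must show that \([L(m)]^2\) actually decomposes in the Grothendieck ring, which is exactly the hard part and is what the paper's \(\lambda\neq 0\) computation is aimed at. Your structural route correctly isolates the missing ingredient (that every real prime simple in \(\mathcal{C}_1\) corresponds to a cluster variable) and correctly diagnoses this as open; but that leaves you with a reduction to an open problem rather than a proof.
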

For \(n=1\), this module has been studied in Example 6.15 of \cite{N11}, where it was shown to be imaginary. We show that \Cref{conjecture-intro} implies \Cref{imag-conj-intro}.

\subsection*{Structure of the paper}
The paper is organized as follows. 
\begin{itemize}
	\item \Cref{Preliminaries} gives the necessary background on cluster algebras and quantum groups. 
	\item \Cref{uqslinf} recalls useful results on \(\uqslinf\).
	\item \Cref{folding} studies the folding of infinite quivers and their associated cluster algebras. 
	\item \Cref{section_folding_Ainf} contains the proof of \Cref{folding-theorem-intro}. 
	\item \Cref{C1infty} discusses the monoidal categorification coming from a category \(\mathcal{C}_1\) of \(\uqslinf\)-modules. 
	\item \Cref{uqtor} uses the results of \cite{N11} and those of the previous sections to give a proof for \Cref{theorem-ice-folding-intro}.
	\item \Cref{further} applies \Cref{theorem-ice-folding-intro} to study the \(q\)-characters of certain \(\uqtor{2n}\)-modules that are not cluster variables, discusses \Cref{imag-conj-intro}, and outlines further questions.
\end{itemize}

\subsection*{Acknowledgments}
I would like to thank my advisors David Hernandez and Yann Palu for their guidance and support. This project has received funding from the European Union’s Horizon Europe research and innovation programme under the Marie Skłodowska-Curie grant agreement n° 101126554. \par 
Co-Funded by the European Union. Views and opinions expressed are however those of the author only and do not necessarily reflect those of the European Union. Neither the European Union nor the granting authority can be held responsible for them. \def\svgwidth{3.6em}
\begingroup%
  \makeatletter%
  \providecommand\color[2][]{%
    \errmessage{(Inkscape) Color is used for the text in Inkscape, but the package 'color.sty' is not loaded}%
    \renewcommand\color[2][]{}%
  }%
  \providecommand\transparent[1]{%
    \errmessage{(Inkscape) Transparency is used (non-zero) for the text in Inkscape, but the package 'transparent.sty' is not loaded}%
    \renewcommand\transparent[1]{}%
  }%
  \providecommand\rotatebox[2]{#2}%
  \newcommand*\fsize{\dimexpr\f@size pt\relax}%
  \newcommand*\lineheight[1]{\fontsize{\fsize}{#1\fsize}\selectfont}%
  \ifx\svgwidth\undefined%
    \setlength{\unitlength}{2167.83728027bp}%
    \ifx\svgscale\undefined%
      \relax%
    \else%
      \setlength{\unitlength}{\unitlength * \real{\svgscale}}%
    \fi%
  \else%
    \setlength{\unitlength}{\svgwidth}%
  \fi%
  \global\let\svgwidth\undefined%
  \global\let\svgscale\undefined%
  \makeatother%
  \begin{picture}(1,0.18682214)%
    \lineheight{1}%
    \setlength\tabcolsep{0pt}%
    \put(0,0){\includegraphics[width=\unitlength,page=1]{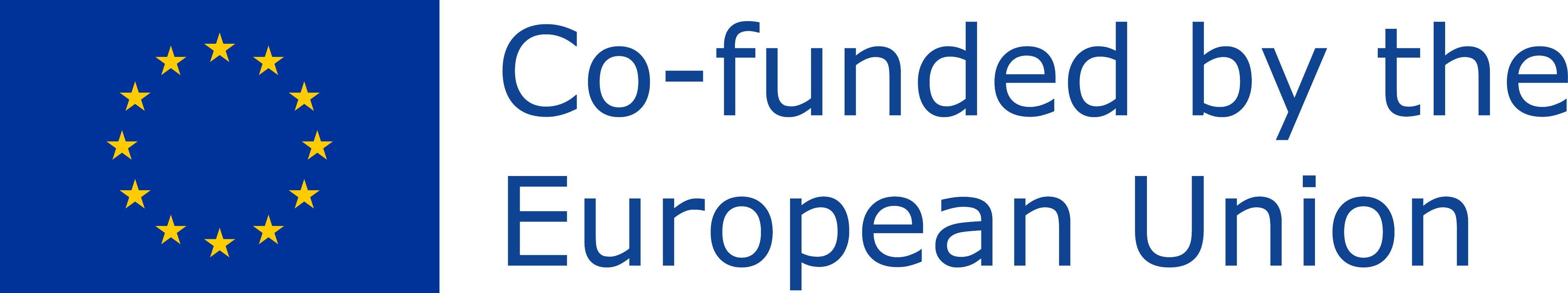}}%
  \end{picture}%
\endgroup%

\section{Preliminaries}\label{Preliminaries}

\subsection{Infinite quivers and cluster algebras}

This section covers the basics on cluster algebras and sets some notation we will need for this paper. A more detailed treatment can be found, for example, in \cite{FZ02,FWZ24} or in the survey \cite{KEl08}.

\begin{definition}
	A quiver \(Q\) is a directed graph with vertex set \(Q_0\) and arrow set \(Q_1\), equipped with two functions \(s,t : Q_1 \rightarrow Q_0\). Given \(\alpha \in Q_1\), we call \(s(\alpha)\) the source of \(\alpha\), and \(t(\alpha)\) the target of \(\alpha\).
\end{definition}
For a quiver \(Q\) and two subsets \(X,Y \subseteq Q_0\), we set 
\begin{align*}
	\Arr_Q(X,Y) \coloneq \#\{\alpha \in Q_1 \mid s(\alpha) \in X, t(\alpha) \in Y\} \in \mathbb{N} \cup \{\infty\}
\end{align*}
the number of arrows from \(X\) to \(Y\), and 
\begin{align*}
	\Ed_Q(X,Y) = \Arr_Q(X,Y) + \Arr_Q(Y,X)
\end{align*}
the number of edges between \(X\) and \(Y\). If \(X = \{x\}\) we will write \(\Arr_Q(x,Y)\) for \(\Arr_Q(X,Y)\) and \(\Ed_Q(x,Y)\) for \(\Ed_Q(X,Y)\) (and similarly for \(Y = \{y\}\)). \par
To describe a quiver \(Q\) up to isomorphism, it is thus enough to give its vertex set \(Q_0\) and \(\Arr_Q(x,y)\) for every pair of vertices \(x,y\in Q_0\). \par
In this paper, we assume our quivers do not have loops or \(2\)-cycles. This means \(\Arr_Q(x,y)\cdot \Arr_Q(y,x) = 0\) for all \(x,y \in Q_0\). We allow a quiver \(Q\) to have an infinite countable vertex set. However, we require it to be locally finite, that is, \(\Ed_Q(x, Q_0) < \infty\) for every \(x \in Q_0\). \par
An ice quiver \((Q,F)\) is a quiver \(Q\) together with a subset of its vertices \(F \subseteq Q_0\). We call the vertices in \(F\) frozen, and those in \(Q_0 \setminus F\) mutable. For simplicity, and given our application in the following sections, we assume there are no arrows between frozen vertices. Any quiver \(Q\) can be regarded as an ice quiver by setting \(F=\emptyset\). \par
Given an ice quiver \((Q,F)\) and a mutable vertex \(z \in Q_0\), we can define a new ice quiver \((\mu_z (Q), F)\) as follows. The vertex set of \(\mu_z (Q)\) is \(Q_0\). The arrows are given by
\begin{align*}
	\Arr_{\mu_z (Q)}(x,y) = \begin{cases}
		0 & \text{ if } x,y \in F \\
		\max\{0,b_{x,y}-b_{y,x}\} & \text{ if } x,y \notin F \cup \{z\} \\ 
		\Arr_Q(y,x) & \text{ otherwise}
	\end{cases},
\end{align*}
where \(b_{x,y} = \Arr_Q(x,y) + \Arr_Q(x,z) \cdot \Arr_Q(z,y)\). We call the new ice quiver the mutation of \((Q,F)\) at the vertex \(z\). We note that \(\mu_{z}(\mu_z(Q)) \simeq Q\). \par
Having defined the mutation operation, we can associate to an ice quiver \((Q,F)\) a commutative algebra as follows. We fix \((Q,F)\), and consider the field
\begin{align*}
	\tilde{\mathcal{F}} = \mathbb{Q}(x_v \mid v \in Q_0),
\end{align*}
where \(x_v\) for \(v \in Q_0\) are indeterminates. A seed \(((Q',F'),\underline{y})\) is a pair of an ice quiver \((Q',F')\), and \(\underline{u} = \{u_v \in \tilde{\mathcal{F}} \mid v \in Q_0\}\), a free generating set of \(\tilde{\mathcal{F}}\). \par
We extend the mutation operations to seeds. For a mutable \(z \in Q_0'\), we define the mutation of the seed \(((Q',F'),\underline{u})\) at \(z\) to be the seed \(((\mu_z (Q'),F'),\mu_z (\underline{u}))\), where \(\mu_z (\underline{u}) = \underline {u} \cup \{u'_z\} \setminus \{u_z\} \) and 
\begin{align*}
	u'_z = \frac{\prod_{w \in Q'_0} u_w^{\Arr_{Q'}(z,w)}+\prod_{w \in Q'_0} u_w^{\Arr_{Q'}(w,z)}}{u_z}.
\end{align*}
Notice that \(\mu_z(\mu_z (\underline{u})) = \underline{u}\). \par
To the ice quiver \((Q,F)\), we associate the set \(\underline{x} = \{x_v \mid v \in Q_0\}\). We call \(((Q,F),\underline{x})\) the initial seed. We let \(\mathcal{S}\) be the set of all seeds obtained from the initial seed by a finite sequence of mutation. Let \(s \in \mathcal{S}\) be some seed, so \(s = ((Q_s,F_s),\underline{u}_s)\). An easy observation is that \((Q_s)_0 = Q_0\) and \(F_s = F\). Moreover, \(\underline{u}_s\) contains all \(x_v\) such that \(v \in F\).  We call \(\underline{u}_s\) a cluster, and any \(u_v \in \underline{u}_s\) a cluster variable.
\begin{definition}
	The cluster algebra \(\mathcal{A}_{(Q,F)}\) is the subring of \(\tilde{\mathcal{F}}\) generated by all cluster variables. That is \(\mathcal{A}_{(Q,F)} \coloneq \mathbb{Z}[\mathcal{X}] \subseteq \tilde{\mathcal{F}}\) for \(\mathcal{X} = \bigcup_{s \in \mathcal{S}} \underline{u}_s\).
\end{definition}
The Laurent phenomenon is a very well known result which states that every cluster variable is a Laurent polynomial in the initial cluster with integral coefficient. In our context, we set \(\mathcal{F} = \mathbb{Z}[x_v \mid v\in F][x_v^{\pm 1} \mid v \in Q_0 \setminus F]\), so the Laurent phenomenon can be stated as \(\mathcal{A}_{(Q,F)} \subseteq \mathcal{F}\).

\subsection{Symmetric Kac-Moody quantum groups and their affinizations}\label{Section-quantum-groups}

We give here some necessary background on quantum groups and their representation theory. For the sake of concreteness, we restrict our attention to symmetric Cartan datum. For proofs and more details, see \cite{CP91,CP95,N01,H05}. \par
Let \(I\) be a finite index set, and \(\lvert I \rvert = n\). Let \(C = (C_{i,j})_{i,j \in I}\) be a symmetric matrix such that \(C_{i,i} = 2\) and \(C_{i,j} \in \mathbb{Z}_{\leq 0}\) for \(i\neq j\). To this information, we associate a realization \((\mathfrak{h},\Pi,\Pi^\vee)\)  (see \cite{K90}), with \(\mathfrak{h}\) a complex vector space of dimension \(2n-\mathrm{rank}(C)\), the set of simple roots \(\Pi = \{\alpha_i \mid i \in I\}\subseteq \mathfrak{h}^*\), and the set of simple coroots \(\Pi^\vee = \{\alpha_i^\vee \mid i \in I\} \subseteq \mathfrak{h}\), such that \(\alpha_j(\alpha_i^\vee) = C_{i,j}\). We let \(\Delta\) be the associated root system and we set \(Q^+ = \sum_{i\in I} \mathbb{N}\alpha_i\). We denote by 
\begin{align*}
	P^\vee = \{h \in \mathfrak{h} \mid \alpha_i(h) \in \mathbb{Z},\ i \in I\},
\end{align*}
the group of integral coweights, by
\begin{align*}
	P = \{\lambda \in \mathfrak{h}^* \mid \lambda(\alpha_i^\vee) \in \mathbb{Z},\ i \in I\}
\end{align*}
the group of integral weights, and by
\begin{align*}
	P^+ = \{\lambda \in \mathfrak{h}^* \mid \lambda(\alpha_i^\vee) \in \mathbb{N},\ i \in I\}
\end{align*}
its submonoid of dominant integral weights. \par
From now on we fix \(q \in \mathbb{C}^*\), not a root of unity. Given \(a,b \in \mathbb{N}\) such that \(a\leq b\) we set 
\begin{align*}
	[b]_q = \frac{q^b-q^{-b}}{q-q^{-1}}, && [b]_q! = [b]_q\cdots[1]_q, && \qchoose{b}{a} = \frac{[b]_q!}{[b-a]_q![a]_q!}.
\end{align*}
\begin{definition}
	The quantum Kac-Moody algebra \(\uq{\mathfrak{g}}\) is the \(\mathbb{C}\)-algebra with generators \(k_h\) and \(x_{i}^\pm\) with \(h \in P^\vee\) and \(i \in I\), subject to relations
	\begin{gather*}
		k_h k_{h'} = k_{h+h'}, \qquad k_0 = 1, \\
		k_h x_j^\pm k_{-h} = q^{\pm \alpha_j(h)} x_j^\pm, \\
		[x_i^+,x_j^-] = \delta_{i,j} \frac{k_{\alpha_i^\vee} - k_{-\alpha_i^\vee}}{q-q^{-1}}, \\
		\sum_{r=0}^{1-C_{i,j}}(-1)^r\qchoose{1-C_{i,j}}{r}(x_{i}^\pm)^{1-C_{i,j}-r}x_j^\pm (x_i^\pm)^r = 0.
	\end{gather*}
\end{definition}
Let \(\uq{\mathfrak{g}}^+\), \(\uq{\mathfrak{g}}^-\) and \(\uq{\mathfrak{h}}\) be the subalgebras of \(\uq{\mathfrak{g}}\) generated by the \(x_i^+\), \(x_i^-\) and \(k_h\), respectively. We have the triangular decomposition \(\uq{\mathfrak{g}} \simeq \uq{\mathfrak{g}}^-\otimes \uq{\mathfrak{h}} \otimes \uq{\mathfrak{g}}^+\).
\begin{definition}\label{qaffinization-def}
	\allowdisplaybreaks The quantum affinization of \(\uq{\mathfrak{g}}\) is the \(\mathbb{C}\)-algebra \(\uqhat{g}\) with generators \(k_h\), \(h_{i,m}\) and \(x_{i,r}^\pm\) with \(h\in P^\vee\), \(i \in I\), \(r\in\mathbb{Z}\) and \(m \in \mathbb{Z}_{\neq 0}\), subject to relations
	\begin{gather*}
		k_h k_{h'} = k_{h+h'}, \qquad k_0 = 1, \qquad [k_h,h_{i,m}] = 0, \qquad [h_{i,m},h_{j,m'}] = 0, \\
		k_h x_{j,r}^\pm k_{-h} = q^{\pm\alpha_j(h)}x_{j,r}^\pm, \\
		[h_{i,m},x_{j,r}^\pm] = \pm \frac{1}{m}[mC_{i,j}]_q x_{j,m+r}^\pm, \\
		[x_{i,r}^+,x_{j,r'}^-] = \delta_{i,j} \frac{\phi_{i,r+r'}^+-\phi_{i,{r+r'}}^-}{q-q^{-1}}, \\
		x_{i,r+1}^\pm x_{j,r'}^\pm - q^{\pm C_{i,j}} x_{j,r'}^\pm x_{i,r+1}^\pm = q^{\pm C_{i,j}} x_{i,r}^\pm x_{j,r'+1}^\pm - x_{j,r'+1}^\pm x_{i,r}^\pm, \\
		\sum_{\pi \in S_d}\sum_{k=0}^{d}(-1)^k \qchoose{d}{k} x^{\pm}_{i,r_{\pi(1)}}\cdots x^{\pm}_{i,r_{\pi(k)}} x_{j,r'}^\pm x^{\pm}_{i,r_{\pi(k+1)}}\cdots x^{\pm}_{i,r_{\pi(d)}} = 0,
	\end{gather*}
	where \(d=1-C_{i,j}\), \(r_1,...,r_{d}\) is any sequence of integers, and \(\phi_{i,m}^\pm\) are determined by
	\begin{align*}
		\sum_{m=0}^\infty \phi_{i,\pm m }^\pm z^{\pm m} = k_{\pm \alpha_i^\vee} \exp\left(\pm(q-q^{-1})\sum_{m'=1}^\infty h_{i,\pm m'} z^{\pm m'}\right),
	\end{align*}
	and \(\phi_{i,\pm m}^\pm = 0\) for \(m < 0\).
\end{definition}
Let \(\uqhat{g}^+\), \(\uqhat{g}^-\) and \(\uqhat{h}\) be the subalgebras of \(\uqhat{g}\) generated by the \(x_{i,r}^+\), \(x_{i,r}^-\) and \(k_h, h_{i,m}\), respectively. We have a triangular decomposition for \(\uqhat{g}\), that is, a linear isomorphism \(\uqhat{g} \simeq \uqhat{g}^-\otimes \uqhat{h} \otimes \uqhat{g}^+\). \par
The subalgebra of \(\uqhat{g}\) generated by \(x_i^\pm\) and \(k_h\) is isomorphic to \(\uq{\mathfrak{g}}\). Identifying these, we have \(\uq{\mathfrak{g}}^\pm \subseteq \uqhat{g}^\pm\) and \(\uq{\mathfrak{h}} \subseteq \uqhat{h}\). \par
A \(\uq{\mathfrak{h}}\)-module \(V\) is said to be \(\uq{\mathfrak{h}}\)-diagonalizable if
\begin{align*}
	V = \bigoplus_{\omega \in P} V_{\omega}, \qquad V_{\omega} \coloneq \{v \in V \mid k_h(v) = q^{\omega(h)}v,\ \forall h\in P^\vee\}.
\end{align*}
A \(\uq{\mathfrak{g}}\)-modules \(V\) is called integrable if it is \(\uq{\mathfrak{h}}\)-diagonalizable, its weight spaces \(V_\omega\) are all finite-dimensional, and for any \(\omega \in P\) and \(i \in I\), we have \(V_{\omega \pm r\alpha_i}=0\) for \(r\) large enough. \par
\begin{definition}\label{Oint-def}
	The category \(\Oint(\uqhat{g})\) is the full subcategory of \(\uqhat{g}\)-modules whose objects are the modules \(V\) satisfying
	\begin{enumerate}
		\item \(V\) is integrable as a \(\uq{\mathfrak{g}}\)-module,
		\item there are a finite number of weights \(\lambda_1,...,\lambda_s \in \mathfrak{h}^*\) for which
		\begin{align*}
			V_\omega \neq 0 \implies \omega \in \bigcup_{j=1}^{s} (\lambda_j - Q^+). 
		\end{align*}
	\end{enumerate}
\end{definition}
A pair \((\lambda,\gamma)\) with \(\lambda\in P\) and \(\gamma = (\gamma^\pm_{i,\pm m})_{i \in I, m \in \mathbb{N}}\) such that \(\gamma_{i,\pm m}^\pm \in \mathbb{C}\) and \(\gamma_{i,0}^\pm = q^{\pm \lambda(\alpha_i^\vee)}\) is called an \(l\)-weight. We denote by \(P_l\) the set of \(l\)-weights, and we view it as a refinement of the notion of a weight. \par
A \(\uqhat{g}\)-module \(V\) is said to be of \(l\)-highest weight \((\lambda,\gamma) \in P_l\) if there exists \(v \in V\) such that
\begin{gather*}
	\uqhat{g}^+(v) = 0, \qquad
	\uqhat{g}(v) = V, \qquad
	\phi_{i,\pm m}^\pm (v) = \gamma_{i,\pm m }^\pm v, \qquad
	k_h(v) = q^{\lambda(h)}v.
\end{gather*}
For an \(l\)-weight \((\lambda,\gamma) \in P_l\), we denote by \(L(\lambda,\gamma)\) the unique (up to isomorphism) simple \(\uqhat{g}\)-module of  \(l\)-highest weight \((\lambda,\gamma)\). \par
We let \(P_l^+\) bet the set of dominant \(l\)-weights, which are the \(l\)-weights \((\lambda,\gamma)\) for which there exist polynomials \((P_i)_{i\in I}\) with \(P_i \in 1+u\mathbb{C}[u]\) satisfying
\begin{align*}
	\sum_{m\in\mathbb{N}}\gamma_{i,\pm m}^\pm z^{\pm m} = q^{\deg(P_i)}\frac{P_i(zq^{-1})}{P_i(zq)} \qquad \text{in \(\mathbb{C}[[z^{\pm 1}]]\)}.
\end{align*}
We call the \((P_i)_{i\in I}\) the Drinfeld polynomials of \((\lambda, \gamma)\). We notice that if an \(l\)-weight \((\lambda,\gamma)\) belongs to \(P_l^+\), then \(\lambda \in P^+\).
\begin{theorem}[Theorem 4.9 and Proposition 5.2 in \cite{H05}]
	The simple objects of \(\Oint(\uqhat{g})\) are the modules \(L(\lambda,\gamma)\) with \((\lambda,\gamma) \in P_l^+\). \qed 
\end{theorem}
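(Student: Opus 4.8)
The plan is to prove the two halves of the statement in turn: first, that every simple object of \(\Oint(\uqhat{g})\) is of the form \(L(\lambda,\gamma)\) for some \(l\)-weight \((\lambda,\gamma)\); and second, that the \(l\)-weights arising this way are exactly the dominant ones.

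For the first half, I would take a simple object \(V\) of \(\Oint(\uqhat{g})\) and use condition (2) of \Cref{Oint-def}: the set of weights of \(V\) then has a maximal element \(\lambda\) for the order in which \(\omega \le \omega'\) iff \(\omega' - \omega \in Q^+\). The weight space \(V_\lambda\) is finite-dimensional by integrability, and it is stable under \(\uqhat{h}\) because all the generators \(k_h\) and \(h_{i,m}\) have weight zero; since \(\uqhat{h}\) is commutative, one can pick a common eigenvector \(v \in V_\lambda\). As \(x_{i,r}^+\) raises weights by \(\alpha_i\), maximality of \(\lambda\) forces \(\uqhat{g}^+(v)=0\), while simplicity gives \(\uqhat{g}(v)=V\). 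Reading off the eigenvalues of the \(\phi_{i,m}^\pm\) on \(v\) produces an \(l\)-weight \((\lambda,\gamma)\) — the constraint \(\gamma_{i,0}^\pm = q^{\pm\lambda(\alpha_i^\vee)}\) coming from \(\phi_{i,0}^\pm = k_{\pm\alpha_i^\vee}\) — so \(V \simeq L(\lambda,\gamma)\).

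For the second half I would argue both implications. If \(L(\lambda,\gamma)\in\Oint(\uqhat{g})\) with \(l\)-highest weight vector \(v\), fix \(i\in I\) and let \(\hat U_i \subseteq \uqhat{g}\) be the subalgebra generated by the \(x_{i,r}^\pm\), the \(h_{i,m}\), and \(\uqhat{h}\); by the relations of \Cref{qaffinization-def} with \(C_{i,i}=2\) it is a homomorphic image of \(\uqsl{2}\). The submodule \(\hat U_i(v)\) is an \(l\)-highest weight module over this image; it is integrable because \(L(\lambda,\gamma)\) is integrable as a \(\uq{\mathfrak{g}}\)-module, and its \(\mathfrak{sl}_2\)-weights are bounded above, so it is finite-dimensional. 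The classification of finite-dimensional \(l\)-highest weight \(\uqsl{2}\)-modules (Chari--Pressley) then forces the datum \((\lambda(\alpha_i^\vee),(\gamma_{i,\pm m}^\pm))\) to be encoded by a Drinfeld polynomial \(P_i \in 1+u\mathbb{C}[u]\) with \(\deg P_i = \lambda(\alpha_i^\vee)\); running this for all \(i\) gives \((\lambda,\gamma)\in P_l^+\). Conversely, given \((\lambda,\gamma)\in P_l^+\) with Drinfeld polynomials \((P_i)_{i\in I}\), I would form the universal \(l\)-highest weight module \(M(\lambda,\gamma)\) — the quotient of \(\uqhat{g}\) by the left ideal generated by \(\uqhat{g}^+\), by \(k_h - q^{\lambda(h)}\), and by \(\phi_{i,m}^\pm - \gamma_{i,m}^\pm\) — and then its Weyl-module quotient \(W(\lambda,\gamma)\) obtained by additionally killing \((x_i^-)^{\lambda(\alpha_i^\vee)+1}v\) for every \(i\). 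Using the triangular decomposition together with the Drinfeld-type relations, one checks that \(W(\lambda,\gamma)\) is integrable as a \(\uq{\mathfrak{g}}\)-module, with all weights in \(\lambda - Q^+\) and finite-dimensional weight spaces, hence \(W(\lambda,\gamma)\in\Oint(\uqhat{g})\); since \(\Oint\) is closed under subquotients, its simple quotient \(L(\lambda,\gamma)\) is also in \(\Oint(\uqhat{g})\).

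The hard part will be the last implication — verifying that \(W(\lambda,\gamma)\) is integrable, i.e.\ that each \(x_{i,r}^-\) acts locally nilpotently. This is the technical core: one must leverage the relation between the \(\phi_{i,m}^\pm\) and the \(h_{i,m}\) and the Drinfeld and quantum Serre relations to bound the \(\uqsl{2}\)-strings through an arbitrary weight vector, generalizing the corresponding step of the Chari--Pressley finite-dimensional classification to the category \(\Oint\). A secondary, less severe obstacle is making the rank-one reduction in the forward direction fully rigorous, namely checking that \(\hat U_i(v)\) really lands in the finite-dimensional representation theory of \(\uqsl{2}\) before the classification there can be invoked.
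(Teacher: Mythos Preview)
The paper does not give a proof of this statement: it is quoted from \cite{H05} (Theorem~4.9 and Proposition~5.2), marked with \qed, and used as a black box. There is therefore no proof in the paper to compare your proposal against.

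That said, your outline is the standard one and matches the strategy of the cited reference, which in turn extends the Chari--Pressley argument from quantum affine algebras to general quantum affinizations. Two remarks. First, in the forward direction of the second half, the step ``\(\hat U_i(v)\) is finite-dimensional'' deserves a word: the \(\mathfrak{sl}_2\)-weight spaces of \(\hat U_i(v)\) sit inside the \(V_{\lambda - k\alpha_i}\), which are finite-dimensional by integrability and vanish for large \(k\); this is what makes the reduction to the Chari--Pressley classification legitimate. Second, you are right that the converse --- integrability of the Weyl module \(W(\lambda,\gamma)\) --- is the genuine technical core; in \cite{H05} this occupies most of the work (and is where the general Kac--Moody setting differs nontrivially from finite type), so treating it as a black box to be filled in is appropriate at the level of a proof sketch.
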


For a module \(V\) in \(\Oint(\uqhat{g})\), we have a decomposition \(V = \bigoplus_{(\lambda,\gamma) \in P_l} V_{(\lambda,\gamma)}\) with \(V_{(\lambda,\gamma)}\) the subspace of \(V_{\gamma}\) on which \(\left(\phi_{i,\pm m}^{\pm} - \gamma_{i,\pm m}^{\pm}\right)\) acts nilpotently for all \(i\in I\), \(m\in \mathbb{N}\).

\begin{proposition}[Proposition 5.4 in \cite{H05}]\label{proposition-drinfeld}
	Let \(V\) be a module in \(\Oint(\uqhat{g})\) and \((\lambda,\gamma)\) be an \(l\)-weight for which \(V_{(\lambda,\gamma)} \neq 0\). Then there are polynomials \(Q_i,R_i \in 1+u\mathbb{C}[u]\) such that 
	\begin{align*}
		\sum_{m \in \mathbb{N}} \gamma_{i,\pm m}^\pm z^{\pm m} = q^{\deg(Q_i)-\deg(R_i)}\frac{Q_i(zq^{-1})R_i(zq)}{Q_i(zq)R_i(zq^{-1})}  \qquad \text{in \(\mathbb{C}[[z^{\pm 1}]]\)}
	\end{align*} 
	for every \(i \in I\). \qed
\end{proposition}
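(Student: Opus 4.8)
The statement is a rationality-and-shape assertion about the generating series $\gamma_i^\pm(z)\coloneq\sum_{m\in\mathbb N}\gamma^\pm_{i,\pm m}z^{\pm m}$: for an eigenvector $v$ spanning a line in $V_{(\lambda,\gamma)}$, it says that the two series $\gamma_i^+(z)$ and $\gamma_i^-(z)$ by which $\phi_i^\pm(z)$ acts on $v$ are the expansions at $0$ and at $\infty$ of one rational function of the form $q^{\deg Q_i-\deg R_i}Q_i(zq^{-1})R_i(zq)/(Q_i(zq)R_i(zq^{-1}))$; in $q$-character language, that every $l$-weight of $V$ is a monomial in the variables $Y_{i,a}^{\pm1}$, with the usual dictionary $Y_{i,a}\leftrightarrow q(1-zq^{-1}a)/(1-zqa)$. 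The plan is to follow the classical strategy of Chari--Pressley and Frenkel--Reshetikhin: reduce to rank one, and propagate the shape downward from the $l$-highest weight along the lowering currents.

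First I would reduce to the case $V=L(\lambda,\gamma^{\mathrm{top}})$ simple with $(\lambda,\gamma^{\mathrm{top}})\in P_l^+$: replace $V$ by a finitely generated submodule containing $v$ and then by a Jordan--Hölder subquotient still carrying the given $l$-weight (an honest common eigenvector exists because the $\phi^\pm_{i,m}$ pairwise commute on the finite-dimensional weight space $V_\lambda$). Then $V=\uqhat{g}^-\cdot v^{\mathrm{top}}$ for the $l$-highest-weight vector. For $v^{\mathrm{top}}$ the assertion already holds with $R_i=1$: by the defining property of $P_l^+$ one has $\gamma^{\mathrm{top},\pm}_i(z)=q^{\deg P_i}P_i(zq^{-1})/P_i(zq)$, so the top $l$-weight is the ``dominant'' monomial $\prod_i\prod_a Y_{i,a}$ (product over the roots of the Drinfeld polynomials). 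The remaining task is to show that applying a lowering current $x_{j,r}^-$ to an $l$-weight vector whose $l$-weight is a monomial in the $Y_{i,a}^{\pm1}$ again produces vectors whose $l$-weights are such monomials; then an induction on the number of lowering currents finishes the proof, with $Q_i$ collecting the positive powers of $Y_{i,\bullet}$ and $R_i$ the negative ones.

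This last point is where rank one enters. For fixed $j$ the operators $x_{j,r}^\pm$, $h_{j,m}$, $k_h$ satisfy the defining relations of $\uqsl{2}$, and the relations $[h_{i,m},x_{j,r}^-]=-\tfrac1m[mC_{i,j}]_q x_{j,m+r}^-$ and $k_hx_{j,r}^-k_{-h}=q^{-\alpha_j(h)}x_{j,r}^-$ package into an exchange relation in which $\phi_i^\pm(z)$ conjugates the current $x_j^-(w)=\sum_r x_{j,r}^-w^r$ by an explicit rational factor built from $(w-q^{\pm C_{i,j}}z)$. Combined with the classical $\uqsl{2}$ argument forcing the $\phi_j^\pm$-eigenvalue on a pseudo-$l$-highest-weight vector to have the shape $q^{\deg P}P(zq^{-1})/P(zq)$ --- obtained by pairing the $x_{j,r}^-$ against the $x_{j,r'}^+$ through $[x_{j,r}^+,x_{j,r'}^-]=\delta_{i,j}(\phi^+_{j,r+r'}-\phi^-_{j,r+r'})/(q-q^{-1})$ and using finiteness of the $\alpha_j$-string (integrability) to turn a linear dependence into a polynomial identity --- this would let me read off that each lowering step multiplies the $l$-weight by the monomial $A_{j,a}^{-1}=Y_{j,aq}^{-1}Y_{j,aq^{-1}}^{-1}\prod_{i\neq j}Y_{i,a}^{-C_{i,j}}$ for a suitable $a$, which is exactly what the induction needs. (Alternatively, once in rank one one could invoke the description of simple integrable $\uqsl{2}$-modules as subquotients of tensor products of evaluation modules, whose $l$-weights telescope into the stated shape and multiply under tensor product.)

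The hard part will be the bookkeeping in the induction step: showing that applying $x_{j,r}^-$ introduces only the factors $A_{j,a}^{-1}$ and nothing else requires controlling the commutator of $\phi_i^\pm(z)$ with $x_{j,r}^-$ inside $V$ precisely, together with the finiteness inputs that make the $\uqsl{2}$ argument go through. One cannot here simply quote the finite-dimensional $\uqsl{2}$ classification, since restricting to the $j$-th $\uqsl{2}$ leaves $\Oint(\uqhat{g})$ --- the $\Sl{2}$-weight spaces can become infinite-dimensional --- so all finiteness must be drawn from the (finite-dimensional) $\mathfrak g$-weight spaces of $V$ itself; and in the quantum toroidal case the simple objects of $\Oint(\uqhat{g})$ are infinite-dimensional, so the tensor-product shortcut is unavailable and the intrinsic descent is genuinely required. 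Everything else is the standard rank-one $\Sl{2}$ computation together with the already-quoted description of dominant $l$-weights via Drinfeld polynomials.
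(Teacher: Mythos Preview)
The paper does not prove this statement: it is quoted verbatim from \cite{H05} with a \qed and no argument. Your sketch is essentially the strategy carried out in that reference (extending the Frenkel--Reshetikhin/Frenkel--Mukhin argument from finite type to the general symmetric Kac--Moody setting), and you have correctly located both the rank-one reduction and the point where the general case genuinely differs from the quantum-affine one---namely that restriction to a single \(\uqsl{2}\) no longer lands in finite-dimensional modules, so all finiteness must be drawn from the \(\mathfrak g\)-weight spaces of \(V\) itself.

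One caution: your opening reduction ``pass to a Jordan--H\"older subquotient'' presupposes that a finitely generated object of \(\Oint(\uqhat{g})\) has finite length, which is not automatic in the toroidal setting. The argument in \cite{H05} avoids this by working directly with \(V\): one shows that the \(l\)-weight spaces of \(V\) are stable under a suitable descent along the \(A_{j,a}^{-1}\) without first passing to a simple subquotient. Apart from this, your outline matches the cited proof.
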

From this point on, we restrict our attention to types \(A_n\) and \(A^{(1)}_n\). In type \(A_n\), we take \(I = \{1,...,n\}\) as our index set, and in type \(A^{(1)}_n\) we take \(I = \mathbb{Z}/(n+1)\mathbb{Z}\). \par
We consider the formal variables \(Y_{i,a}\) and \(k_\omega\) with \(i \in I\), \(a \in \mathbb{C}^*\) and \(\omega \in \mathfrak{h}^*\). We let \(\Auq{g}\) be the commutative multiplicative group of monomials of the form 
\begin{align*}
	m = k_{\omega(m)} \prod_{i\in I, a \in \mathbb{C}^*} Y_{i,a}^{u_{i,a}(m)}
\end{align*}
for \(\omega(m) \in P\) and \(u_{i,a}(m) \in \mathbb{Z}\) almost all zero, satisfying \(\omega(m)(\alpha_i^\vee) = \sum_{a\in \mathbb{C}^*}u_{i,a}(m)\). The product of two monomials \(m_1, m_2 \in \Auq{g}\) is given by the rule \begin{align*}
\omega(m_1\cdot m_2) = \omega(m_1) + \omega(m_2), && u_{i,a}(m_1\cdot m_2) = u_{i,a}(m_1)+u_{i,a}(m_2).
\end{align*} 
\begin{remark}\label{remark-qchar-A}
	In the \(A_n\) case, the weight \(\omega(m)\) is completely determined by \((u_{i,a}(m))_{i\in I, a\in \mathbb{C}^*}\). We may thus omit the variables \(k_h\) from our notation without losing information. We will still use the notation \(\omega(m)\) whenever we need to refer to the weight of a monomial \(m\).
\end{remark}
For a module \(V\) in \(\Oint(\uqhat{g})\) and an \(l\)-weight \((\lambda,\gamma)\) such that \(V_{(\lambda,\gamma)} \neq 0\), we write \(Q_i(u) = \prod_{a\in\mathbb{C}^*} (1-ua)^{\mu_{i,a}}\) and \(R_i(u) = \prod_{a\in\mathbb{C}^*} (1-ua)^{\nu_{i,a}}\) for the polynomials of \Cref{proposition-drinfeld}, and set
\begin{align*}
	m_{(\lambda,\gamma)} = k_\lambda\prod_{i\in I,a\in\mathbb{C}^*}Y_{i,a}^{\mu_{i,a}-\nu_{i,a}} \in \Auq{g}. 
\end{align*}
We see that a monomial \(m \in \Auq{g}\) satisfies \(u_{i,a}(m) \geq 0\) if and only if \(m = m_{(\lambda,\gamma)}\) for (a unique) \((\lambda,\gamma) \in P_l^+\). In this case we say that \(m\) is a dominant monomial. Moreover, if \(m\) is dominant and \(m=m_{(\lambda,\gamma)}\) with \((\lambda,\gamma)\in P_l^+\), we write \(L(m)\) for \(L(\lambda,\gamma)\). \par
We let \(\Yuq{g}\) be the ring defined as follows:
\begin{enumerate}
	\item its elements are formal infinite sums of monomials in \(\Auq{g}\) with integer coefficients,
	\item for every such formal infinite sum \(\chi\), there are finitely many weights \(\lambda_1,...,\lambda_s \in \mathfrak{h}^*\) such that
	\begin{align*}
		\text{\(m\) is a monomial of \(\chi\)} \implies \omega(m) \in \bigcup_{j=1}^{s} (\lambda_j - Q^+). 
	\end{align*}
\end{enumerate}
\begin{definition}
	The \(q\)-character of a module \(V \in \Oint(\uqhat{g})\) is defined to be
	\begin{align*}
		\chi_q(V) = \sum_{(\lambda,\gamma) \in P_l} \dim V_{(\lambda,\gamma)} m_{(\lambda,\gamma)} \in \Yuq{g}.
	\end{align*}
\end{definition}
Let \(\mathcal{R}(\uqhat{g})\) be the Grothendieck group of finite length modules in \(\Oint(\uqhat{g})\).
\begin{theorem}[Theorem 5.15 and Theorem 6.2 in \cite{H05}]
	The \(q\)-characters give rise to an injective map \(\chi_q: \mathcal{R}(\uqhat{g}) \rightarrow \Yuq{g}\), and its image is a subring of \(\Yuq{g}\). \par
	Moreover, given two modules \(V_1,V_2 \in \Oint(\uqhat{g})\), there exists a module \(W \in \Oint(\uqhat{g})\) such that \(\chi_q(V_1)\cdot \chi_q(V_2) = \chi_q(W)\). \qed
\end{theorem}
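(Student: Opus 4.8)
I would establish the three assertions separately: that $\chi_q$ descends to an injective group homomorphism $\mathcal{R}(\uqhat g)\to\Yuq g$, that its image is a subring, and that for honest modules $V_1,V_2$ the product $\chi_q(V_1)\chi_q(V_2)$ is again a $q$-character. For well-definedness, the key observation is that the $l$-weight decomposition $V=\bigoplus_{(\lambda,\gamma)}V_{(\lambda,\gamma)}$ is compatible with submodules and quotients: the $\phi_{i,m}^\pm$ and $k_h$ lie in $\uqhat g$ and commute with one another, and each $V_\omega$ is finite-dimensional, so a submodule $U\subseteq V$ inherits the simultaneous generalized eigenspace decomposition and $\dim V_{(\lambda,\gamma)}=\dim U_{(\lambda,\gamma)}+\dim(V/U)_{(\lambda,\gamma)}$. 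Hence $\chi_q$ is additive on short exact sequences and descends to $\mathcal{R}(\uqhat g)$. As $\mathcal{R}(\uqhat g)$ is, by definition, the Grothendieck group of the length category of finite-length objects of $\Oint(\uqhat g)$, it is free abelian on $\{[L(\lambda,\gamma)]\mid(\lambda,\gamma)\in P_l^+\}$, so injectivity is equivalent to $\mathbb{Z}$-linear independence of the $q$-characters $\chi_q(L(\lambda,\gamma))$.

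This independence I would obtain by the classical highest-monomial argument. If $L=L(\lambda,\gamma)$ has $l$-highest weight vector $v$, then the triangular decomposition together with $\uqhat g^+ v=\mathbb{C}v$ gives $L=\uqhat g^- v$, and since $\uqhat g^-$ is generated by the weight-lowering $x_{i,r}^-$, every $l$-weight $(\mu,\delta)$ with $L_{(\mu,\delta)}\neq0$ has $\mu\in\lambda-Q^+$ while the weight-$\lambda$ space is $\mathbb{C}v$. In terms of monomials this says that $m_{(\lambda,\gamma)}$ appears in $\chi_q(L(\lambda,\gamma))$ with coefficient $1$ and every other monomial $m$ of $\chi_q(L(\lambda,\gamma))$ has $\omega(m)\in\omega(m_{(\lambda,\gamma)})-Q^+$ with $\omega(m)\neq\omega(m_{(\lambda,\gamma)})$. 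Now if $\sum_j n_j\chi_q(L_j)=0$ with the $L_j$ pairwise non-isomorphic simples and not all $n_j=0$, choose $j_0$ with $n_{j_0}\neq0$ and $\omega(m_{L_{j_0}})$ maximal (for the order induced by $Q^+$) among the finitely many $\omega(m_{L_j})$ with $n_j\neq0$. If $m_{L_{j_0}}$ occurred in some $\chi_q(L_{j'})$ with $j'\neq j_0$ and $n_{j'}\neq0$ then $\omega(m_{L_{j_0}})<\omega(m_{L_{j'}})$, contradicting maximality; hence the coefficient of $m_{L_{j_0}}$ in $\sum_j n_j\chi_q(L_j)$ equals $n_{j_0}\neq0$, a contradiction.

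For multiplicativity I would use the Drinfeld coproduct $\Delta_u$, twisted by a formal spectral parameter $u$ so that the infinite sums occurring in $\Delta_u(x_{i,r}^\pm)$ converge $u$-adically. Given $V_1,V_2\in\Oint(\uqhat g)$, forming $V_1\otimes V_2$ with the second factor carrying the shift by $u$ and letting $\uqhat g$ act through $\Delta_u$ produces a module over $\mathbb{C}[[u]]$ (or $\mathbb{C}((u))$) which one verifies lies in $\Oint$ over this base; because $\Delta_u$ is diagonal on the commuting generators, $\Delta_u(\phi_{i,m}^\pm)=\sum_k\phi_{i,k}^\pm\otimes\phi_{i,m-k}^\pm$ up to the shift, its joint generalized eigenspaces are tensor products of those of $V_1$ and $V_2$, so its $q$-character is $\chi_q(V_1)$ times the image of $\chi_q(V_2)$ under $Y_{i,a}\mapsto Y_{i,ua}$. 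Reducing modulo $(u-a)$ for a generic $a\in\mathbb{C}^*$ then yields an honest module $W_a\in\Oint(\uqhat g)$ with $\chi_q(W_a)=\chi_q(V_1)\,\chi_q(V_2(a))$, where $V_2(a)$ denotes the spectral shift of $V_2$. Since every object of $\Oint$ is such a shift of another object, applying this with $V_2$ replaced by its shift by $a^{-1}$ gives $W\in\Oint(\uqhat g)$ with $\chi_q(V_1)\chi_q(V_2)=\chi_q(W)$. Finally the image of $\chi_q$ is a subgroup of $\Yuq g$ that contains $1$ (the $q$-character of the trivial module) and is closed under multiplication, since $\bigl(\sum_i n_i\chi_q(L_i)\bigr)\bigl(\sum_j m_j\chi_q(L_j)\bigr)=\sum_{i,j}n_im_j\chi_q(L_i)\chi_q(L_j)$ and each factor $\chi_q(L_i)\chi_q(L_j)$ has just been shown to be a $q$-character; hence it is a subring.

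The main obstacle is the multiplicativity statement, and within it the two delicate analytic points: constructing $\Delta_u$ on the appropriate completion and checking that the $u$-twisted tensor product is genuinely an object of $\Oint$ over $\mathbb{C}[[u]]$ with the asserted $q$-character, and then showing that for generic $a$ the reduction at $u=a$ remains integrable, remains in $\Oint$, and preserves every $l$-weight multiplicity, so that the $q$-character is unchanged under specialization. The injectivity half, by contrast, is essentially bookkeeping once the compatibility of the $l$-weight decomposition with subquotients is recorded.
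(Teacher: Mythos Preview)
The paper does not give its own proof; the statement is quoted from \cite{H05} with a bare \qed. Your injectivity argument via highest-monomial triangularity is correct and standard. The gap is in the multiplicativity step. You specialize the $u$-deformed tensor product at a generic $a$ to get a module with $q$-character $\chi_q(V_1)$ times the $a$-shift of $\chi_q(V_2)$, then propose to undo the shift by first replacing $V_2$ with $V_2(a^{-1})$. But twisting the second tensor factor by the spectral-shift automorphism $\sigma_b$ has the same effect on $\Delta_u$ as rescaling $u$: acting on $V_1\otimes V_2(a^{-1})$ via $\Delta_u$ and specializing at $u=a$ is literally the same as acting on $V_1\otimes V_2$ via $\Delta_u$ and specializing at $u=1$. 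Equivalently, the set of bad specialization values for the pair $(V_1,V_2(a^{-1}))$ is the $a$-dilate of the set for $(V_1,V_2)$, so $a$ is admissible for the former if and only if $1$ is already admissible for the latter. The shift trick is circular and yields nothing when $u=1$ is a pole for the original pair.

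What \cite{H05} actually does is more delicate: one works over $\mathbb{C}(u)$, shows that the cyclic submodule generated by the tensor product of $l$-highest weight vectors carries an integral form on which the action is regular at $u=1$, and defines the fusion product $V_1*V_2$ as the specialization of that form. That this specialization lies in $\Oint(\uqhat{g})$ and has $q$-character $\chi_q(V_1)\chi_q(V_2)$ is the substance of Theorem~6.2 there. Your closing paragraph rightly names the analytic control of $\Delta_u$ and of the specialization as the crux, but the device you propose does not resolve it.
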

\begin{remark}\label{remark-fusion}
	The algebra \(\uqhat{g}\) is not, a priori, a Hopf algebra. The theorem above can be viewed as a consequence of the existence of the fusion product introduced in \cite{H05} 
\end{remark}
For \(i \in I\) and \(a \in \mathbb{C}^*\) we set 
\begin{align*}
	A_{i,a} \coloneq k_{\alpha_i} Y_{i,aq^{-1}}Y_{i,aq}Y^{-1}_{{i-1},a}Y^{-1}_{{i+1},a} \in \Yuq{g}.
\end{align*} 
In the \(A_n\) case, with \(I=\{1,...,n\}\), we have \(Y_{0,a} = 1\) and \(Y_{n+1,a} = 1\), whereas in the \(A^{(1)}_n\) case, with \(I = \mathbb{Z}/(n+1)\mathbb{Z}\), we have \(Y_{0,a} = Y_{n+1,a}\).
We define a partial order on \(\Auq{g}\) by 
\begin{align*}
	m' \leq m \iff m'm^{-1} \in \mathbb{Z}[A_{i,a}^{-1} \mid i \in I, a \in \mathbb{C}^*].
\end{align*}
We call it the Nakajima partial order. 
\begin{theorem}[Theorem 3.2 in \cite{H05-qt}]\label{Htheorem-char-order}
	Let \(m \in \Auq{g}\) be a dominant monomial and let \(m'\) be a monomial that appears in \(\chi_q(L(m))\). Then \(m' \leq m\).
\end{theorem}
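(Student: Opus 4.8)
The plan is to obtain this directly from the \(l\)-highest weight structure of \(L(m)\); the only substantial ingredient is the way the Drinfeld generators \(x_{j,r}^-\) move \(l\)-weights. Write \(m = m_{(\lambda,\gamma)}\) with \((\lambda,\gamma)\in P_l^+\) and let \(v\) be an \(l\)-highest weight vector of \(L(m)\), so \(\phi_{i,\pm s}^\pm v = \gamma_{i,\pm s}^\pm v\), \(k_h v = q^{\lambda(h)}v\) and \(\uqhat{g}^+(v)=0\). Since \(\uqhat{h}\) is commutative and acts on \(v\) by scalars, the triangular decomposition \(\uqhat{g}\simeq \uqhat{g}^-\otimes\uqhat{h}\otimes\uqhat{g}^+\) gives \(L(m)=\uqhat{g}(v)=\uqhat{g}^-(v)\); in particular \(L(m)\) is spanned by the vectors \(x_{i_1,r_1}^-\cdots x_{i_k,r_k}^- v\).

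The first step is the \emph{shift lemma}: for all \(i,j\in I\), \(r\in\mathbb Z\) and every \(l\)-weight \((\mu,\delta)\),
\[
x_{j,r}^-\cdot L(m)_{(\mu,\delta)}\ \subseteq\ \bigoplus_{\substack{(\mu',\delta')\in P_l\\ m_{(\mu',\delta')}=m_{(\mu,\delta)}A_{j,a}^{-1},\ a\in\mathbb C^*}} L(m)_{(\mu',\delta')}.
\]
This is the standard computation of Frenkel--Reshetikhin inside the Drinfeld presentation: the relation \([k_h,x_{j,r}^-]\) accounts for the weight dropping by \(\alpha_j\), and combining \([h_{i,s},x_{j,r}^-]=-\tfrac{1}{s}[sC_{i,j}]_q x_{j,s+r}^-\) with the definition of \(\phi_i^\pm(z)\) as an exponential in the \(h_{i,s}\) yields a relation \(\phi_i^\pm(z)x_{j,r}^- = f_{i,j}(z)\,x_{j,r}^-\phi_i^\pm(z) + (\text{terms of lower }h\text{-degree})\), where \(f_{i,j}\) is the rational function whose zeros and poles are placed so as to multiply a \(\phi_i\)-eigenvalue exactly by the \(Y_{i,\bullet}\)-part of some \(A_{j,a}^{-1}\); passing to generalized eigenspaces gives the stated inclusion. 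Note that for \(C_{i,j}=0\) the commutator vanishes, matching the fact that \(A_{j,a}\) involves only \(Y_{j-1,\bullet},Y_{j,\bullet},Y_{j+1,\bullet}\). These relations have the same shape for any symmetric Cartan datum, so the argument of \cite{FR} (see also \cite{H05}) applies verbatim here.

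Granting the shift lemma, the theorem follows by iteration. Since \(v\in L(m)_{(\lambda,\gamma)}\) and \(m_{(\lambda,\gamma)}=m\), applying the lemma \(k\) times shows that \(x_{i_1,r_1}^-\cdots x_{i_k,r_k}^-v\) lies in the sum of the spaces \(L(m)_{(\mu',\delta')}\) with \(m_{(\mu',\delta')}=m\,A_{i_1,a_1}^{-1}\cdots A_{i_k,a_k}^{-1}\) for some \(a_1,\dots,a_k\in\mathbb C^*\). As such vectors span \(L(m)\), any \((\mu',\delta')\) with \(L(m)_{(\mu',\delta')}\neq0\) satisfies \(m_{(\mu',\delta')}m^{-1}\in\mathbb Z[A_{i,a}^{-1}\mid i\in I,\ a\in\mathbb C^*]\), i.e. \(m_{(\mu',\delta')}\leq m\). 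Since \(\chi_q(L(m))=\sum_{(\mu',\delta')}\dim L(m)_{(\mu',\delta')}\,m_{(\mu',\delta')}\), every monomial \(m'\) occurring in \(\chi_q(L(m))\) satisfies \(m'\leq m\). (The argument uses only that \(L(m)\) is \(l\)-highest weight, not its simplicity.)

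The crux is the shift lemma: one must verify that \(x_{j,r}^-\) moves an \(l\)-weight monomial \emph{exactly} by some inverse \(A\)-monomial and never by anything else, and that the ``lower \(h\)-degree'' correction terms do not spoil this conclusion after passing to generalized eigenspaces. This is a genuine computation inside the Drinfeld presentation of \(\uqhat{g}\); everything downstream of it is formal manipulation with the Nakajima order.
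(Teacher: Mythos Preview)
The paper does not give its own proof of this statement: it is quoted as Theorem~3.2 of \cite{H05-qt} and used as a black box, so there is nothing in the paper to compare against directly. Your sketch is the standard argument going back to Frenkel--Reshetikhin and Frenkel--Mukhin in the quantum affine case, which Hernandez adapts in \cite{H05,H05-qt} to general quantum affinizations; in outline and in spirit it is correct, and your identification of the ``shift lemma'' as the only nontrivial step is accurate.

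Two comments are in order. First, your formulation of the shift lemma is slightly sharper than what one gets immediately from the commutation relations: a single \(x_{j,r}^-\) does not send a \(\phi\)-eigenvector to another \(\phi\)-eigenvector, and the subspace \(\sum_r \mathbb{C}\,x_{j,r}^- v\) is what is stable under \(\uqhat{h}\). One then has to argue that the generalized \(\phi\)-eigenvalues on that subspace are exactly those obtained from the original one by an \(A_{j,a}^{-1}\)-shift for some \(a\). This is where the rational-function identity for \(\phi_i^\pm(z)x_j^-(w)\) is actually used, and where one must check that the only possible new poles/zeros match the \(Y\)-content of \(A_{j,a}^{-1}\). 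You acknowledge this, but the phrase ``lower \(h\)-degree terms'' hides the real work.

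Second, in the present paper the theorem is applied uniformly to both \(\uqsl{n+1}\) and \(\uqtor{2n}\). For the toroidal case the modules in \(\Oint\) are infinite-dimensional, so the finite-dimensional arguments of \cite{FR,FM01} do not apply verbatim; the point of the reference \cite{H05-qt} is precisely that the argument goes through once one uses the weight-space finiteness built into \(\Oint\). Your sketch does not flag this, but nothing in your outline actually uses finite-dimensionality, so it survives.
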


\begin{remark}
	In this paper we work with the algebras \(\uqtor{2n}\). For \(n > 1\), they are given by \(\uqtor{2n} \coloneq \uq{\widehat{\widehat{\Sl{2n}}}}\). For \(n=1\), however, we use a modified definition which can be found in \cite{H-folding}. In short, the algebra \(\uqtor{2}\) has a presentation by generators and relations similar to \Cref{qaffinization-def}, but the relation
	\begin{align*}
		x_{i,r+1}^\pm x_{j,r'}^\pm - q^{\pm C_{i,j}} x_{j,r'}^\pm x_{i,r+1}^\pm = q^{\pm C_{i,j}} x_{i,r}^\pm x_{j,r'+1}^\pm - x_{j,r'+1}^\pm x_{i,r}^\pm
	\end{align*}
	is not imposed. \par
	As the results of this paper hold uniformly for \(\uqtor{2n}\) for all \(n \geq 1\), we will not make any distinction between the case \(n = 1\) and the cases \(n>1\). 
\end{remark}

\subsection{Recollection on \(\mathcal{C}_1(\uqsl{n+1})\)}\label{Recollection-C1}

We summarize some facts about the category \(\mathcal{C}_1(\uqsl{n+1})\) that will be of use later on in this paper. We follow \cite{HL10}. \par 
We let \(I_n = \{1,...,n\}\), and let \((I_n)_0\) (respectively \((I_n)_1\)) be the subset of \(I_n\) consisting of even (respectively odd) integers. We let \(\xi : I_\infty \rightarrow \{0,1\}\) be the map defined by 
\begin{align*}
	\xi(i) = \begin{cases}
		0 & i \in (I_n)_0 \\
		1 & i \in (I_n)_1
	\end{cases}.
\end{align*}
We denote by \(\mathcal{M}_n \subseteq \Auq{\Sl{n+1}}\) the subgroup of monomials generated by \(Y_{i,q^{\xi(i)}}\) and \(Y_{i,q^{\xi(i)+2}}\) for all \(i\in I_n\). We let \(\mathcal{C}_1(\uqsl{n+1})\) be the full subcategory of \(\Oint(\uqsl{n+1})\) consisting of modules of finite length, with composition factors isomorphic to \(L(m)\) for some \(m \in \mathcal{M}_\infty\). \par

\begin{proposition}[Proposition 3.2 of \cite{HL10}]\label{HL-grothendieck-polynomial}
	The category \(\mathcal{C}_1(\uqsl{n+1})\) is monoidal. The polynomial ring
	\begin{align*}
		\mathbb{Z}\left[[L(Y_{i,q^{\xi(i)}})],[L(Y_{i,q^{\xi(i)+2}})] \mid i \in I_n \right]
	\end{align*} 
	is isomorphic to the Grothendieck group \(\mathcal{R}(\mathcal{C}_1(\uqsl{n+1}))\). 	\qed
\end{proposition}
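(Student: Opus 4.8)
The plan is to prove the two assertions in turn: that \(\mathcal{C}_1(\uqsl{n+1})\) is a monoidal subcategory of \(\Oint(\uqsl{n+1})\), and then that its Grothendieck ring is the stated polynomial ring.

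\textbf{Reduction of monoidality.} By construction \(\mathcal{C}_1(\uqsl{n+1})\) is the full subcategory of finite-length objects of \(\Oint(\uqsl{n+1})\) all of whose composition factors are of the form \(L(m)\) with \(m\in\mathcal{M}_n\) dominant; this description is visibly stable under subobjects, quotients and extensions and contains the unit \(L(1)\). Hence the only point is closure under the (fusion) product \(\otimes\), and since \([V\otimes W]=[V]\,[W]\) and composition series are additive, this reduces to the case \(V=L(m_1)\), \(W=L(m_2)\) with \(m_1,m_2\in\mathcal{M}_n\) dominant. As \(\chi_q\) is a ring homomorphism, \(\chi_q(L(m_1)\otimes L(m_2))=\chi_q(L(m_1))\,\chi_q(L(m_2))\), and the composition factors of \(L(m_1)\otimes L(m_2)\) are exactly the \(L(m')\) for \(m'\) a dominant monomial of this product. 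So monoidality is equivalent to the claim that \emph{every dominant monomial occurring in \(\chi_q(L(m_1))\,\chi_q(L(m_2))\), with \(m_1,m_2\in\mathcal{M}_n\) dominant, lies in \(\mathcal{M}_n\)}.

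\textbf{The combinatorial heart (the main obstacle).} I would prove this by controlling the monomials of \(\chi_q(L(m))\) for dominant \(m\in\mathcal{M}_n\). By \Cref{Htheorem-char-order} each such monomial equals \(m\) times a product of \(A_{i,a}^{-1}\)'s, and the essential lemma is that only finitely many pairs \((i,a)\), explicitly determined by the height function \(\xi\), can occur here (for \(\mathfrak{sl}_2\), for instance, only \(A_{1,q^{\xi(1)+1}}\) and \(A_{1,q^{\xi(1)+3}}\)). Inspecting the \(Y\)-factors of these admissible \(A_{i,a}\), one finds that besides the generators \(Y_{i,q^{\xi(i)}},Y_{i,q^{\xi(i)+2}}\) of \(\mathcal{M}_n\) they involve only ``boundary'' variables, and that in every \(\chi_q(L(m))\) these boundary variables occur with non-positive exponent; consequently in a product \(\chi_q(L(m_1))\,\chi_q(L(m_2))\) each boundary variable still has non-positive total exponent, so a dominant monomial of the product contains no boundary variable and hence lies in \(\mathcal{M}_n\). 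Establishing the finite list of admissible \(A_{i,a}\) — equivalently, pinning down the \(q\)-characters of the fundamental and Kirillov--Reshetikhin modules occurring in \(\mathcal{C}_1\) — is the genuinely hard step; the rest is bookkeeping with spectral parameters.

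\textbf{The multiplication formula.} Granting monoidality, \(\{[L(m)]\mid m\in\mathcal{M}_n \text{ dominant}\}\) is a \(\mathbb{Z}\)-basis of the commutative ring \(\mathcal{R}(\mathcal{C}_1(\uqsl{n+1}))\), and the dominant monomials of \(\mathcal{M}_n\) form the free commutative monoid on \(\{Y_{i,q^{\xi(i)}},Y_{i,q^{\xi(i)+2}}\mid i\in I_n\}\). For a nontrivial factorization \(m=m_1m_2\) of such a monomial, \(m_1m_2\) occurs with coefficient \(1\) in \(\chi_q(L(m_1))\,\chi_q(L(m_2))\) — the only way to write it as a product of a monomial of \(\chi_q(L(m_1))\) and a monomial of \(\chi_q(L(m_2))\) is \((m_1,m_2)\), since by \Cref{Htheorem-char-order} both factors differ from \(m_1\), \(m_2\) by products of \(A_{i,a}^{-1}\)'s with non-negative exponents — and no \(\chi_q(L(m'))\) with \(m'<m_1m_2\) contributes to this coefficient. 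Hence
\[
[L(m_1)]\,[L(m_2)] = [L(m)] + \sum_{\substack{m'\in\mathcal{M}_n \text{ dominant}\\ \omega(m')<\omega(m)}} d^{\,m'}_{m_1,m_2}\,[L(m')],\qquad d^{\,m'}_{m_1,m_2}\in\mathbb{Z}_{\geq 0},
\]
where I used that every dominant monomial occurring strictly below \(m=m_1m_2\) in the Nakajima order has strictly smaller weight. Since for each weight \(\lambda\) there are only finitely many dominant \(m\in\mathcal{M}_n\) with \(\omega(m)\leq\lambda\), the weight order on these monomials is well-founded, and induction on \(\omega(m)\) using the displayed identity shows every \([L(m)]\) is a \(\mathbb{Z}\)-polynomial in the \(2n\) generator classes \([L(Y_{i,q^{\xi(i)}})]\), \([L(Y_{i,q^{\xi(i)+2}})]\).

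\textbf{Conclusion.} Define a ring homomorphism \(\Psi\) from \(R=\mathbb{Z}[z_i^{(0)},z_i^{(2)}\mid i\in I_n]\) to \(\mathcal{R}(\mathcal{C}_1(\uqsl{n+1}))\) by \(z_i^{(0)}\mapsto[L(Y_{i,q^{\xi(i)}})]\), \(z_i^{(2)}\mapsto[L(Y_{i,q^{\xi(i)+2}})]\); it is well defined because the target is commutative, and surjective by the previous paragraph. Iterating the multiplication formula shows that for the standard monomial \(\mu=\prod_i(z_i^{(0)})^{a_i}(z_i^{(2)})^{b_i}\) with associated dominant monomial \(m_\mu=\prod_i Y_{i,q^{\xi(i)}}^{a_i}Y_{i,q^{\xi(i)+2}}^{b_i}\in\mathcal{M}_n\) one has \(\Psi(\mu)=[L(m_\mu)]+\sum_{\omega(m')<\omega(m_\mu)} e_{m'}[L(m')]\) for some \(e_{m'}\in\mathbb{Z}\). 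Thus the transition matrix between the images of the standard monomial basis of \(R\) and the basis \(\{[L(m)]\}\) is unitriangular with respect to (a linearization of) the well-founded weight order, hence invertible over \(\mathbb{Z}\); so \(\Psi\) sends a \(\mathbb{Z}\)-basis of \(R\) to a \(\mathbb{Z}\)-basis of \(\mathcal{R}(\mathcal{C}_1(\uqsl{n+1}))\) and is an isomorphism of rings, which is the assertion.
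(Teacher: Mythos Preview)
The paper does not give its own proof of this proposition: it is stated with a terminal \qed and attributed to Proposition~3.2 of \cite{HL10}. So there is nothing in the present paper to compare against; your proposal is effectively a reconstruction of the Hernandez--Leclerc argument, and it follows the same strategy they use --- reduce monoidality to a statement about dominant monomials via \Cref{Htheorem-char-order}, then obtain the polynomial ring structure by unitriangularity of the change of basis from monomials in the generators to the classes \([L(m)]\).

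One small wrinkle: your ``induction on \(\omega(m)\)'' for surjectivity is not quite well-posed as written. In a nontrivial factorization \(m=m_1m_2\) the factors do \emph{not} in general satisfy \(\omega(m_i)<\omega(m)\) in the root order (for instance \(\omega_1\not<\omega_1+\omega_2\) in type \(A_2\), since fundamental weights are not in \(Q^+\)), so you cannot feed \(m_1,m_2\) back into the same induction. This is harmless, however, because your final paragraph already proves both surjectivity and injectivity at once: the identity \(\Psi(\mu)=[L(m_\mu)]+\sum_{\omega(m')<\omega(m_\mu)} e_{m'}[L(m')]\) exhibits \(\Psi\) as unitriangular between two \(\mathbb{Z}\)-bases, which is all you need. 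The intermediate surjectivity paragraph can simply be dropped. As you note yourself, the genuinely nontrivial input is pinning down which \(A_{i,a}^{-1}\) can occur; that is indeed the content of the lemma in \cite{HL10} that you are invoking.
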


We let \(\Gamma_n\) be the following quiver (see Section 4.2 of \cite{HL10}). Its vertex set is \(\{1,...,n\}\cup\{1',...,n'\}\). We think of the vertices \(i\) and \(i'\) as elements of \(I_n\). The arrows of \(\Gamma_n\) are given by
\begin{align*}
	&\Arr_{{\Gamma_n}}(i,j) = \begin{cases}
		\delta_{i,i+1}+\delta_{i,i-1} & i\in (I_n)_0 \\ 0 & i \in (I_n)_1
	\end{cases}, 
	&& \Arr_{{\Gamma_n}}(i',j') = 0, \\
	&\Arr_{{\Gamma_n}}(i',j) = \begin{cases}
		\delta_{i,j} & i\in (I_n)_0 \\ 0 & i \in (I_n)_1
	\end{cases},
	&& \Arr_{{\Gamma_n}}(i,j') = \begin{cases}
		0 & i\in (I_n)_0 \\ \delta_{i,j} & i \in (I_n)_1
	\end{cases}.
\end{align*}
The vertices \(i'\) are considered frozen.

\begin{figure}[h]
	\begin{center}
		\[\begin{tikzcd}
			\boxed{1'} & \boxed{2'} & \boxed{3'} & \boxed{4'} & \boxed{5'} \\
			1 & 2 & 3 & 4 & 5 
			\arrow[from=2-1, to=1-1]
			\arrow[from=2-2, to=2-1]
			\arrow[from=2-2, to=2-3]
			\arrow[from=2-3, to=1-3]
			\arrow[from=2-4, to=2-3]
			\arrow[from=2-4, to=2-5]
			\arrow[from=2-5, to=1-5]
			\arrow[from=1-2, to=2-2]
			\arrow[from=1-4, to=2-4]
		\end{tikzcd}\]
	\end{center}
	\caption{The quiver \(\Gamma_n\) with \(n = 5\). The square vertices are frozen}
\end{figure}
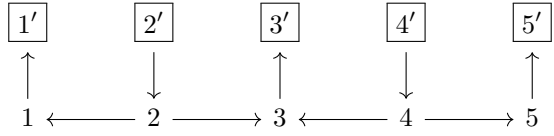 

We set \(\mathcal{F}_n = \mathbb{Z}[f_i, x_i^{\pm 1} \mid i \in I_n]\). We let \(\mathcal{A}_n \subseteq \mathcal{F}_n\) be the cluster algebra defined by \(\Gamma_n\), with initial cluster \((f_i,x_i)_{i\in I_n}\), with the rule that \(x_i\) corresponds to the mutable vertex \(i\) and \(f_i\) to the frozen vertex \(i'\). \par
For an almost positive root \(\alpha \in \Delta\) (of the root system of type \(A_n\)), let \(x[\alpha] \in \mathcal{A}_n\) be the associated (non-frozen) cluster variable. In particular \(x[-\alpha_i] = x_i\). \par
\begin{proposition}[Lemma 4.4 of \cite{HL10}]\label{HL-cluster-polynomial}
	The cluster algebra \(\mathcal{A}_n\) is isomorphic to
	\begin{align*}
		\mathbb{Z}\left[x[-\alpha_i], x[\alpha_i] \mid i \in I_n \right],
	\end{align*}
	a polynomial algebra in \(2n\) variables. \qed
\end{proposition}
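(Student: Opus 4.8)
The plan is to recognize $\mathcal{A}_n$ as a cluster algebra of finite Dynkin type $A_n$, exhibit $2n$ explicit polynomial generators, and then compare transcendence degrees. Since the mutable part of $\Gamma_n$ is the Dynkin quiver of type $A_n$ (with alternating orientation), the Fomin--Zelevinsky classification of finite-type cluster algebras shows $\mathcal{A}_n$ has finitely many cluster variables, indexed by the almost positive roots $\Delta_{\geq -1}$ of $A_n$; these are the $x[\alpha]$ above, with $x[-\alpha_i] = x_i$ the initial ones. As a ring $\mathcal{A}_n$ is generated by $\{x[\alpha] : \alpha \in \Delta_{\geq -1}\}$ together with the frozen variables $f_1,\dots,f_n$, so it suffices to prove that (a) all of these lie in the subring $R := \mathbb{Z}[x[-\alpha_i],x[\alpha_i] \mid i \in I_n] \subseteq \mathcal{A}_n$, and (b) the $2n$ listed elements are algebraically independent over $\mathbb{Z}$.

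For (a), I would first dispose of the frozen variables. Reading off the arrows of $\Gamma_n$, the single mutation of the initial seed at a mutable vertex $i$ yields an exchange relation of the form $x[-\alpha_i]\,x[\alpha_i] = f_i + (\text{a monomial in the } x[-\alpha_j])$, so $f_i = x[-\alpha_i]\,x[\alpha_i] - (\text{that monomial}) \in R$ (for instance $f_i = x[-\alpha_i]x[\alpha_i] - x[-\alpha_{i-1}]x[-\alpha_{i+1}]$, with the obvious convention at the ends). For a positive non-simple root $\alpha = \alpha_i + \alpha_{i+1} + \cdots + \alpha_j$ I would then induct on the height: there is an exchange (Ptolemy, or $T$-system) relation of $A_n$ of the form $x[\alpha]\,x[\beta] = x[\gamma_1]x[\gamma_2] + x[\gamma_3]x[\gamma_4]$, where $\beta,\gamma_1,\dots,\gamma_4$ are either simple roots or roots of height strictly less than that of $\alpha$, with some $\gamma_k$ possibly replaced by a frozen variable; by the inductive hypothesis and the previous step the right-hand side lies in $R$ and is divisible in $R$ by $x[\beta]$, whence $x[\alpha] \in R$. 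The divisibility is precisely what makes the Laurent-phenomenon denominator cancel, and it is already visible for $n=2$, where $x[\alpha_1+\alpha_2] = x[\alpha_1]x[\alpha_2] - 1$.

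For (b), note that $\mathcal{A}_n$ is an integral domain, being a subring of $\mathcal{F}_n = \mathbb{Z}[f_i, x_i^{\pm 1} \mid i \in I_n]$, and it contains all $x_i = x[-\alpha_i]$ and, by the computation above, all $f_i$; hence its fraction field equals $\mathrm{Frac}(\mathcal{F}_n) = \mathbb{Q}(x_1,\dots,x_n,f_1,\dots,f_n)$, which is purely transcendental of degree $2n$ over $\mathbb{Q}$. By part (a), $\mathcal{A}_n = R$ is generated by the $2n$ elements $x[-\alpha_i],x[\alpha_i]$; since its fraction field has transcendence degree $2n$, these $2n$ generators are algebraically independent over $\mathbb{Q}$, hence over $\mathbb{Z}$. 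Therefore the natural surjection from the polynomial ring in $2n$ indeterminates onto $\mathcal{A}_n$ is an isomorphism, which is the assertion.

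The main obstacle is the inductive step in the second paragraph: one must organize the exchange relations --- equivalently, choose a sequence of mutations introducing the positive roots in order of increasing height --- so that, after substituting the polynomial expressions already found for the lower roots and for the $f_i$, the exchange variable $x[\beta]$ genuinely divides the resulting polynomial in the $x[\pm\alpha_i]$. This is a finite but somewhat delicate bookkeeping; an alternative is to invoke the known combinatorial formulas for cluster variables of type $A_n$ (triangulations of an $(n+3)$-gon, or perfect matchings of snake graphs), which give each $x[\alpha]$ explicitly and make its polynomiality in $\{x[\pm\alpha_i]\}$ transparent. It is worth noting that the frozen variables are essential here: with trivial coefficients $\mathcal{A}_n$ is not a polynomial ring for $n \geq 2$, and the transcendence-degree count in (b) would correspondingly fail.
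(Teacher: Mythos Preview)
The paper does not actually give a proof of this proposition: it is stated with a terminal \qed and attributed to Lemma~4.4 of \cite{HL10}. So there is no in-paper argument to compare against; your proposal stands on its own and should be judged accordingly.

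Your part~(b) is clean and correct: once you know $\mathcal{A}_n$ is generated by the $2n$ elements $x[\pm\alpha_i]$, the transcendence-degree count forces algebraic independence. Your treatment of the frozen variables in part~(a) is also correct: from the quiver $\Gamma_n$ the one-step exchange at $i$ is exactly $x[-\alpha_i]\,x[\alpha_i] = f_i + x[-\alpha_{i-1}]x[-\alpha_{i+1}]$, so $f_i \in R$.

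The genuine gap is the inductive step for non-simple positive roots. You assert that in a relation $x[\alpha]\,x[\beta] = (\text{something in }R)$ the factor $x[\beta]$ divides the right-hand side \emph{in $R$}, but you never establish this; ``delicate bookkeeping'' is not a proof, and the combinatorial formulas you allude to would themselves require verification in the presence of these particular frozen coefficients. A much cleaner way to close part~(a) is to bypass the induction entirely: since the mutable part of $\Gamma_n$ is acyclic, the Berenstein--Fomin--Zelevinsky theorem (Cluster algebras~III) gives $\mathcal{A}_n$ equal to its lower bound, i.e.\ the subring generated by the initial cluster, the frozen variables, and the $n$ one-step mutations $x[\alpha_i]$. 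Combined with your formula $f_i \in \mathbb{Z}[x[\pm\alpha_j]]$, this yields $\mathcal{A}_n = \mathbb{Z}[x[-\alpha_i],x[\alpha_i]\mid i\in I_n]$ directly, and then your part~(b) finishes the argument.
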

To an almost positive root \(\alpha\in \Delta\) we associate a monomial \(m_\alpha \in \mathcal{M}_n\) as follows. We set
\begin{align*}
	m_{-\alpha_i} = \begin{cases}
		Y_{i,q^{\xi(i)+2}} & i \in (I_\infty)_0, \\
		Y_{i,q^{\xi(i)}} & i \in (I_\infty)_1,
	\end{cases} && \text{and} && m_{\alpha_i} = \begin{cases}
		Y_{i,q^{\xi(i)}} & i \in (I_\infty)_0, \\
		Y_{i,q^{\xi(i)+2}} & i \in (I_\infty)_1,
	\end{cases}
\end{align*}
for the simple roots. For a non-simple positive root \(\alpha = \sum_{i \in [i_1,i_2]} \alpha_i\) we set \(m_{\alpha} = \prod_{i\in[i_1,i_2]} m_{\alpha_i}\). \par
We let 
\begin{align*}
	\iota : \mathcal{A}_n \rightarrow \mathcal{R}(\mathcal{C}_1(\uqsl{n+1}))
\end{align*} 
be the ring homomorphism defined by \(\iota(x[\pm \alpha_i]) = [L(m_{\pm\alpha_i})]\). 
\begin{theorem}[Conjecture 4.6 and Section 10 of \cite{HL10}]\label{HR-sln-cluster}
	The map \(\iota\) is an isomorphism, and it satisfies
	\begin{align*}
		\iota(x[\alpha]) = [L(m_\alpha)] && \text{and} && \iota(f_i) = [L(Y_{i,q^{\xi(i)}}Y_{i,q^{\xi(i)+2}})]
	\end{align*}
	for any almost positive root \(\alpha\) and \(i \in I_n\). \qed
\end{theorem}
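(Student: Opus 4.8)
The plan is to exploit the two polynomial presentations already at our disposal and then lift the cluster combinatorics of type $A_n$ to identities in the Grothendieck ring. By \Cref{HL-cluster-polynomial} the algebra $\mathcal{A}_n$ is the polynomial ring $\mathbb{Z}[x[-\alpha_i],x[\alpha_i]\mid i\in I_n]$, while by \Cref{HL-grothendieck-polynomial} the ring $\mathcal{R}(\mathcal{C}_1(\uqsl{n+1}))$ is the polynomial ring $\mathbb{Z}[[L(Y_{i,q^{\xi(i)}})],[L(Y_{i,q^{\xi(i)+2}})]\mid i\in I_n]$. Since $\{m_{-\alpha_i},m_{\alpha_i}\}=\{Y_{i,q^{\xi(i)}},Y_{i,q^{\xi(i)+2}}\}$ by definition, the assignment $\iota(x[\pm\alpha_i])=[L(m_{\pm\alpha_i})]$ identifies the two sets of polynomial generators, so $\iota$ is a ring isomorphism with essentially no work. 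The actual content is the identification of $\iota$ on the remaining cluster variables $x[\alpha]$ and on the frozen variables $f_i$. My strategy is to show that the classes $[L(m_\alpha)]$ and $[L(Y_{i,q^{\xi(i)}}Y_{i,q^{\xi(i)+2}})]$, substituted for $x[\alpha]$ and $f_i$, satisfy \emph{every} exchange relation of $\mathcal{A}_n$; since every cluster variable is recursively determined from the initial seed and $\chi_q$ is injective, this forces the stated equalities.

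The base step handles the frozen variables and the first mutations. Mutating $\Gamma_n$ at the mutable vertex $i$ sends $x[-\alpha_i]$ to $x[\alpha_i]$, and (for both parities of $i$, with the convention $x[\pm\alpha_0]=x[\pm\alpha_{n+1}]=1$) the exchange relation reads $x[-\alpha_i]\,x[\alpha_i]=f_i+x[-\alpha_{i-1}]\,x[-\alpha_{i+1}]$. On the representation side $m_{-\alpha_i}m_{\alpha_i}=Y_{i,q^{\xi(i)}}Y_{i,q^{\xi(i)+2}}$, and a short computation with $A_{i,q^{\xi(i)+1}}=Y_{i,q^{\xi(i)}}Y_{i,q^{\xi(i)+2}}Y_{i-1,q^{\xi(i)+1}}^{-1}Y_{i+1,q^{\xi(i)+1}}^{-1}$ gives $Y_{i,q^{\xi(i)}}Y_{i,q^{\xi(i)+2}}A_{i,q^{\xi(i)+1}}^{-1}=m_{-\alpha_{i-1}}m_{-\alpha_{i+1}}$ (using $\xi(i\pm1)=1-\xi(i)$ and $Y_{0,\cdot}=Y_{n+1,\cdot}=1$). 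The required identity is then the $\Sl{2}$-type fusion relation
\[
[L(Y_{i,q^{\xi(i)}})][L(Y_{i,q^{\xi(i)+2}})]=[L(Y_{i,q^{\xi(i)}}Y_{i,q^{\xi(i)+2}})]+[L(m_{-\alpha_{i-1}})][L(m_{-\alpha_{i+1}})],
\]
equivalently that $L(Y_{i,q^{\xi(i)}})\otimes L(Y_{i,q^{\xi(i)+2}})$ has exactly the two composition factors $L(Y_{i,q^{\xi(i)}}Y_{i,q^{\xi(i)+2}})$ and $L(m_{-\alpha_{i-1}}m_{-\alpha_{i+1}})$, each with multiplicity one. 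This I would prove by a Frenkel--Mukhin-type computation of the relevant $q$-character together with \Cref{Htheorem-char-order} to bound the dominant monomials that can occur; it yields $\iota(f_i)=[L(Y_{i,q^{\xi(i)}}Y_{i,q^{\xi(i)+2}})]$ and $\iota(x[\alpha_i])=[L(m_{\alpha_i})]$.

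For a general almost positive root $\alpha$ I would induct using the model of the type-$A_n$ cluster complex as triangulations of an $(n+3)$-gon: each mutation is a flip of a diagonal, and under the dictionary $\alpha\mapsto m_\alpha$ every exchange relation becomes a generalized $T$-system identity of the shape $[L(m_\beta)][L(m_\gamma)]=[L(m_{\beta'})][L(m_{\gamma'})]+[L(m_\delta)][L(m_\varepsilon)]$ among the simples attached to the sides/diagonals of the flipped quadrilateral. Each such relation is \emph{local}: the monomials $m_\alpha$ are supported on intervals of nodes, so the identity reduces to a computation in rank $\le 3$, which one settles by the same $q$-character / Frenkel--Mukhin analysis, using \Cref{Htheorem-char-order} to pin down the dominant monomials of $L(m_\beta)\otimes L(m_\gamma)$ and weight counts to get the multiplicities. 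Inducting on the interval length of $\alpha$ (equivalently, on mutation distance from the initial seed) propagates $\iota(x[\alpha])=[L(m_\alpha)]$ to all almost positive roots, and along the way shows that every cluster monomial maps to the class of a single simple module, a tensor product of the interval-modules $L(m_\alpha)$ at these compatible spectral parameters being simple.

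The main obstacle is this last point: upgrading the abstract ring isomorphism to the assertion that the whole family of exchange relations of $\mathcal{A}_n$ lifts to honest three-term identities in $\mathcal{R}(\mathcal{C}_1(\uqsl{n+1}))$, i.e. that each relevant tensor product $L(m_\beta)\otimes L(m_\gamma)$ decomposes into exactly the two simple constituents dictated by the flip. This is precisely the monoidal-categorification content of the theorem; although the reduction to rank $\le 3$ and \Cref{Htheorem-char-order} make each individual check finite and essentially combinatorial, arranging the induction so that it covers all flips uniformly (rather than case by case) is where the real work lies.
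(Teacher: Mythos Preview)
The paper does not give a proof of this statement at all: it is quoted as an external result, with the attribution ``Conjecture 4.6 and Section 10 of \cite{HL10}'' and a bare \qed. So there is nothing in the present paper to compare your argument against; the authors simply import the theorem from Hernandez--Leclerc.

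That said, your outline is broadly in the spirit of how \cite{HL10} actually establishes the type-$A$ case: one uses the polynomial presentations to see that $\iota$ is an isomorphism, and then the substantive work is to check that all exchange relations lift to short exact sequences in $\mathcal{C}_1$, which in type $A_n$ can be organised via the triangulation/polygon model and reduced to explicit $q$-character computations in small rank. Where your write-up falls short of a proof is exactly where you flag it yourself: phrases like ``this I would prove by a Frenkel--Mukhin-type computation'' and ``I would induct'' leave the actual verifications undone, and the assertion that every flip relation ``reduces to a computation in rank $\le 3$'' needs a precise restriction argument (why the $q$-characters of the interval modules, and their dominant monomials, are insensitive to enlarging the Dynkin diagram). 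In \cite{HL10} these points are handled carefully; if you want a self-contained argument you would need to supply them rather than gesture at them.
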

In this setting, the simple modules in \(\mathcal{C}_1(\uqsl{n+1})\) are real, and their classes are in bijection with the cluster monomials.

\section{The quantum group \(\uqslinf\)}\label{uqslinf}
{In this section we follow \cite{H-folding}, where full details and proofs can be found. \par
	
	We let \(A_\infty\) be the Dynkin diagram with vertex set \(I_\infty \coloneq \mathbb{Z}\) and an edge between \(i\) and \(j\) for \(\lvert j-i \rvert = 1\). We view it as a limit of the Dynkin diagrams of type \(A_n\) when \(n\rightarrow \infty\). 
	
	\begin{figure}[h]
		\begin{center}
			\[\begin{tikzcd}
				\cdots & -2 & -1 & 0 & 1 & 2 & 3 & 4 & 5 & 6 & \cdots
				\arrow[from=1-2, to=1-1, no head]
				\arrow[from=1-3, to=1-2, no head]
				\arrow[from=1-3, to=1-4, no head]
				\arrow[from=1-4, to=1-5, no head]
				\arrow[from=1-5, to=1-6, no head]
				\arrow[from=1-7, to=1-6, no head]
				\arrow[from=1-8, to=1-7, no head]
				\arrow[from=1-9, to=1-8, no head]
				\arrow[from=1-9, to=1-10, no head]
				\arrow[from=1-10, to=1-11, no head]
			\end{tikzcd}\]
		\end{center}
		\caption{The Dynkin diagram \(A_\infty\)}
	\end{figure}
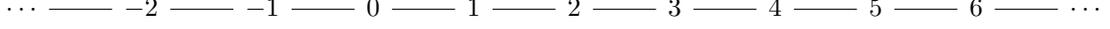 
	We let \(\mathfrak{h}\) and \(\mathfrak{h}^*\) be the associated spaces, \(\Delta \subseteq \mathfrak{h}^*\) be the root system, \(\{\alpha_i \mid i \in I_\infty\} \subseteq \Delta\) the set of simple roots, and set \(Q^+ = \sum_{i\in I_\infty} \mathbb{N}\alpha_i\). We also let \(P \subseteq \mathfrak{h}^*\) be the group of integral weights and \(P^+\) be its submonoid of dominant integral weights. We use \Cref{qaffinization-def} to associate to this Dynkin diagram an algebra \(\uqslinf\) with \(q \in \mathbb{C}^*\), not a root of unity. We think of it as a double-infinite analogue of the quantum affine algebras \(\uqsl{n}\). 
	We let \(\Oint(\uqslinf)\) be defined analogously to \Cref{Oint-def}. See also Definition 2.1 and Section 3.2 of \cite{H-folding}. We let
	\begin{align*}
		P_l = \{\gamma = (\gamma_{i,\pm m}^\pm)_{i\in I_\infty, m \in \mathbb{N}} \mid \gamma_{i,\pm m}^{\pm} \in \mathbb{C}\}
	\end{align*}
	be the set of \(l\)-weight. \par
	We also define \(P_l^+\), the set of dominant \(l\)-weights. These are the \(l\)-weights \(\gamma \in P_l\) for which there exist Drinfeld polynomials \((P_i)_{i\in I}\) such that \(P_i \in 1+u\mathbb{C}[u]\) and \(P_i = 1\) for almost all \(i \in I_\infty\), satisfying
	\begin{align*}
		\sum_{m\in\mathbb{N}}\gamma_{i,\pm m}^\pm z^{\pm m} = q^{\deg(P_i)}\frac{P_i(zq^{-1})}{P_i(zq)} \qquad \text{in \(\mathbb{C}[[z^{\pm 1}]]\)}. 
	\end{align*}
	
	For every \(n \in \mathbb{N}\), the algebra \(\uqslinf\) contains a subalgebra \(\hat{\mathcal{U}}_{[-n,n]}\), isomorphic to \(\uqsl{2n+2}\), such that \(\hat{\mathcal{U}}_{[-n,n]} \subseteq \hat{\mathcal{U}}_{[-(n+1),n+1]}\), and 
	\begin{align*}
		\uqslinf = \bigcup_{n\in\mathbb{N}}\hat{\mathcal{U}}_{[-n,n]}.
	\end{align*}
	\begin{theorem}[Theorem 3.8 in \cite{H-folding}]\label{htheorem-simples} The simple modules in \(\Oint(\uqslinf)\) are the simple \(l\)-highest weight modules \(L(\gamma)\) for \(\gamma \in P_l^+\). Moreover,
		\begin{align*}
			L(\gamma) = \bigcup_{n\in\mathbb{N}} L_n(\gamma),
		\end{align*}
		where \(L_n(\gamma) = \hat{\mathcal{U}}_{[-n,n]}v_\gamma\) and \(v_\gamma\) is an \(l\)-highest weight vector of \(L(\gamma)\). For every \(n \in \mathbb{N}\), the \(\uqsl{2n+2}\)-module \(L_n(\gamma)\) is the simple, finite-dimensional module associated to the Drinfeld polynomials \((P_i)_{i \in [-n,n]}\). \qed
	\end{theorem}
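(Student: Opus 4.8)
The plan is to deduce everything from the exhaustion $\uqslinf = \bigcup_n \hat{\mathcal{U}}_{[-n,n]}$ together with the Chari--Pressley classification of finite-dimensional simple modules over the quantum affine algebras $\uqsl{2n+2}$ and the general quantum-affinization arguments of \cite{H05}. I would split the statement into three parts: (a) the classification of the simple objects of $\Oint(\uqslinf)$ by $P_l^+$; (b) the identity $L(\gamma) = \bigcup_n L_n(\gamma)$; and (c) the identification of $L_n(\gamma)$ with the finite-dimensional simple $\uqsl{2n+2}$-module attached to $(P_i)_{i\in[-n,n]}$.

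For (a), given $\gamma\in P_l^+$, I would define $L(\gamma)$ as the unique simple quotient of the universal $l$-highest weight module $M(\gamma) = \uqslinf v_\gamma$; its existence follows, as in finite rank, from the triangular decomposition $\uqslinf \simeq \uqslinf^-\otimes\uqhat{h}\otimes\uqslinf^+$ and the fact that $M(\gamma)$ has a largest submodule meeting its one-dimensional top weight space trivially. Granting (b) and (c), $L(\gamma)$ is then a directed union of finite-dimensional integrable $\uqsl{2n+2}$-modules, hence integrable over $\uq{\Sl{\infty}}$, with weights in $\lambda - Q^+$ where $\lambda(\alpha_i^\vee) = \deg P_i$; since $P_i = 1$ for almost all $i$, the weight $\lambda$ has finite support, and so $L(\gamma) \in \Oint(\uqslinf)$. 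For the converse, if $L$ is a simple object of $\Oint(\uqslinf)$, the boundedness condition in the definition of $\Oint$ forces the poset of weights of $L$ to have a maximal element $\lambda$ (the set of weights above a fixed one being finite); then $\uqslinf^+$ annihilates the finite-dimensional weight space $L_\lambda$, on which the commutative algebra $\uqhat{h}$ acts, so $L_\lambda$ contains a common eigenvector for the $\phi_{i,m}^\pm$, i.e. an $l$-highest weight vector, whence $L\cong L(\gamma)$ for some $\gamma$. Finally, integrability of $L$ over $\uq{\Sl{\infty}}$ together with the $\uqsl{2}$-string argument of \cite{CP95} would show each $P_i\in 1+u\mathbb{C}[u]$, and the $\Oint$-condition forces $P_i = 1$ for almost all $i$, so $\gamma\in P_l^+$.

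Parts (b) and (c) would rest on the lemma that, \emph{if $L(\gamma)$ is simple over $\uqslinf$, then $L_n(\gamma) = \hat{\mathcal{U}}_{[-n,n]}v_\gamma$ is already simple over $\hat{\mathcal{U}}_{[-n,n]}\cong\uqsl{2n+2}$}. To see this, I would take a proper $\hat{\mathcal{U}}_{[-n,n]}$-submodule $W\subsetneq L_n(\gamma)$; since $\hat{\mathcal{U}}_{[-n,n]}^+ v_\gamma = 0$ one has $L_n(\gamma) = \hat{\mathcal{U}}_{[-n,n]}^- v_\gamma$, so $W$ misses the one-dimensional top weight space $\mathbb{C}v_\gamma$ (i.e. $W_\lambda = 0$) and the weights of $W$ lie in $\lambda - \sum_{|i|\le n}\mathbb{N}\alpha_i$. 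I would then argue that the $\uqslinf$-submodule $\uqslinf W$ still has $(\uqslinf W)_\lambda = 0$: decomposing $u\in\uqslinf$ triangularly and using that $\lambda$ is the top weight of $L(\gamma)$, any weight-$\lambda$ contribution of $uw$ (with $w\in W$ of weight $\mu$) forces the negative part of $u$ to act as a scalar and its positive part to have weight exactly $\lambda - \mu\in\sum_{|i|\le n}\mathbb{N}\alpha_i$; by linear independence of the simple roots that positive part lies in $\hat{\mathcal{U}}_{[-n,n]}^+$, which preserves $W$, so the contribution lies in $W_\lambda = 0$. Hence $\uqslinf W$ is a proper submodule of the simple module $L(\gamma)$, so $\uqslinf W = 0$ and $W = 0$. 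With the lemma, $L_n(\gamma)$ is a simple $l$-highest weight $\uqsl{2n+2}$-module with $l$-highest weight vector $v_\gamma$ and Drinfeld data $(P_i)_{i\in[-n,n]}$, each in $1+u\mathbb{C}[u]$; by \cite{CP91,CP95} it is therefore the finite-dimensional simple module attached to that data, which is (c), and (b) is immediate from $L(\gamma) = \uqslinf v_\gamma = \bigcup_n\hat{\mathcal{U}}_{[-n,n]}v_\gamma = \bigcup_n L_n(\gamma)$.

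The main obstacle I expect is bookkeeping rather than a conceptual difficulty: one must check that $\hat{\mathcal{U}}_{[-n,n]}\cong\uqsl{2n+2}$ compatibly with the triangular decompositions and with the nested inclusions $\hat{\mathcal{U}}_{[-n,n]}\subseteq\hat{\mathcal{U}}_{[-(n+1),n+1]}$, that $\uqslinf = \bigcup_n\hat{\mathcal{U}}_{[-n,n]}$, and that universal $l$-highest weight modules, Drinfeld data and integrability all behave well under these inductive limits; this is the content of the preparatory sections of \cite{H-folding}. Once that infrastructure is in place, the only genuinely new ingredient is the weight computation in the lemma above, and everything else is an adaptation of the finite-rank results of \cite{CP91,CP95} and the general quantum-affinization arguments of \cite{H05,H05-qt}. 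A secondary subtlety is the converse in (a): verifying that a simple object of $\Oint(\uqslinf)$ actually carries an $l$-highest weight vector relies on the boundedness condition built into the definition of $\Oint$, which is what guarantees that its weight poset has maximal elements.
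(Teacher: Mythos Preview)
The paper does not prove this statement: it is quoted verbatim from \cite{H-folding} and marked with a \qed, with the surrounding text saying ``full details and proofs can be found'' there. So there is no in-paper proof to compare against; your proposal is a reconstruction of how the cited result is established.

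As a reconstruction, your outline is sound and follows the expected route: use the exhaustion $\uqslinf=\bigcup_n \hat{\mathcal U}_{[-n,n]}$, reduce to the finite-rank Chari--Pressley classification, and bridge the two by the weight-support lemma showing $L_n(\gamma)$ is already simple over $\hat{\mathcal U}_{[-n,n]}$. The lemma's argument is correct once you note that after decomposing $u=u^-u^0u^+$ and further decomposing $u^+$ into weight components, the relevant component $u^+w$ already lands in $W_\lambda=0$, so the possibly ``foreign'' Cartan generators $h_{i,m}$ with $|i|>n$ in $u^0$ never get a chance to act; you should make that step explicit, since $\uqhat{h}$ is not contained in $\hat{\mathcal U}_{[-n,n]}$ and does not a priori preserve $W$. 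The identification of the weight-$\beta$ part of $\uqslinf^+$ with a subspace of $\hat{\mathcal U}_{[-n,n]}^+$ when $\beta\in\sum_{|i|\le n}\mathbb N\alpha_i$ is exactly the linear-independence-of-simple-roots point you invoke, and it does hold because $\uqslinf^+$ is spanned by ordered monomials in the $x^+_{i,r}$. The remaining items you flag as ``bookkeeping'' (compatibility of triangular decompositions under the nested inclusions, well-definedness of the inductive limit, the $\uqsl2$-string argument for polynomiality of the Drinfeld data) are indeed the infrastructure set up in \cite{H-folding}, and your proposal correctly isolates the one genuinely new step.
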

	
	\begin{proposition}[Proposition 3.11 and Corollary 3.12 in \cite{H-folding}]\label{Hproposition-drinfeld}
		Let \(V\) be a module in \(\Oint(\uqslinf)\) and let \(\gamma\) be an \(l\)-weight such that \(V_\gamma \neq 0\). For every \(i \in I_\infty\), there are polynomials \(Q_i,R_i \in 1+u\mathbb{C}[u]\) such that 
		\begin{align*}
			\sum_{m \in \mathbb{N}} \gamma_{i,\pm m}^\pm z^{\pm m} = q^{\deg(Q_i)-\deg(R_i)}\frac{Q_i(zq^{-1})R_i(zq)}{Q_i(zq)R_i(zq^{-1})}  \qquad \text{in \(\mathbb{C}[[z^{\pm 1}]]\)}
		\end{align*}
		with \(Q_i = 1\), \(R_i = 1\) for almost all \(i \in I_\infty\). \qed
		
	\end{proposition}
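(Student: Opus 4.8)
The plan is to reduce the existence part to the finite-rank statement \Cref{proposition-drinfeld} via the exhaustion $\uqslinf=\bigcup_n\hat{\mathcal{U}}_{[-n,n]}$, and to obtain the refinement ``$Q_i=R_i=1$ for almost all $i$'' by a separate, direct weight-space argument. First I would reduce to the case $V=L(\gamma')$ with $\gamma'\in P_l^+$: the asserted rational form of $\sum_m\gamma_{i,\pm m}^\pm z^{\pm m}$ depends only on $\gamma$, and if $V_\gamma\neq 0$ then $\gamma$ occurs in a simple subquotient of a suitable submodule of $V$, which by \Cref{htheorem-simples} has the form $L(\gamma')$; this is the step where one uses the structure theory of $\Oint(\uqslinf)$ from \cite{H-folding} to deal with objects that are not of finite length.

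For the existence of $Q_i,R_i$, I would fix $i$ and a nonzero $v\in V_\gamma$, lying in $V_\omega$ for the weight $\omega$ underlying $\gamma$. By \Cref{htheorem-simples}, $v$ lies in $L_n(\gamma')=\hat{\mathcal{U}}_{[-n,n]}v_{\gamma'}$ for some $n$, which we enlarge so that $|i|\le n$; crucially, $L_n(\gamma')$ is a \emph{finite-dimensional} $\uqsl{2n+2}$-module, hence belongs to $\Oint(\uqsl{2n+2})$. Since $\phi_{k,m}^\pm\in\hat{\mathcal{U}}_{[-n,n]}$ for $|k|\le n$ and acts on $v$ as it does in $\uqslinf$, the vector $v$ sits inside the generalized $l$-weight space of $L_n(\gamma')$ with $l$-weight $(\gamma_{k,\pm m}^\pm)_{|k|\le n}$, and \Cref{proposition-drinfeld} hands us $Q_i,R_i\in 1+u\mathbb{C}[u]$ with
\[
\sum_{m\in\mathbb{N}}\gamma_{i,\pm m}^\pm z^{\pm m}=q^{\deg Q_i-\deg R_i}\,\frac{Q_i(zq^{-1})R_i(zq)}{Q_i(zq)R_i(zq^{-1})}.
\]
Because both sides are determined by the action of the $\phi_{i,m}^\pm$ on $v$, the identity holds in $\uqslinf$ as well, and the underlying rational function does not depend on $n$.

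For the triviality at almost all $i$, I would argue directly on $V=L(\gamma')$. Let $\lambda$ be its highest weight; since $\gamma'\in P_l^+$, $\lambda(\alpha_k^\vee)=0$ for all but finitely many $k$. Writing $\omega=\lambda-\beta$ with $\beta\in Q^+$ and setting $S=\{k:\lambda(\alpha_k^\vee)\neq 0\}\cup\Supp(\beta)$, take any $j$ with $\{j-1,j,j+1\}\cap S=\emptyset$, which rules out only finitely many $j$. Then $\omega(\alpha_j^\vee)=0$, so $\gamma_{j,0}^\pm=1$; and $\omega+\alpha_j$ is not a weight of $L(\gamma')$ since $\lambda-(\omega+\alpha_j)=\beta-\alpha_j\notin Q^+$, so $x_{j,r}^+$ annihilates $V_\omega$ for all $r$. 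Restricting $V$ to the $\uq{\mathfrak{sl}_2}$-subalgebra generated by $x_{j,0}^\pm$ and $k_{\pm\alpha_j^\vee}$, integrability forces any weight-zero vector killed by $x_{j,0}^+$ to generate a trivial submodule, hence to be killed by $x_{j,0}^-$ as well, so $x_{j,0}^-$ also annihilates $V_\omega$. Finally, the Drinfeld relation gives $\phi_{j,m}^+=(q-q^{-1})[x_{j,m}^+,x_{j,0}^-]$ and $\phi_{j,-m}^-=-(q-q^{-1})[x_{j,-m}^+,x_{j,0}^-]$ for $m>0$, both of which therefore kill $V_\omega$; hence $\gamma_{j,\pm m}^\pm=0$ for $m>0$, the series is identically $1$, and one may take $Q_j=R_j=1$.

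The main obstacle I anticipate is the passage to finite rank: the restriction of an arbitrary object of $\Oint(\uqslinf)$ to $\hat{\mathcal{U}}_{[-n,n]}$ need not lie in $\Oint(\uqsl{2n+2})$ — under restriction the ``top weights'' can acquire unbounded $\alpha_{\pm n}$-components — so \Cref{proposition-drinfeld} cannot be invoked directly; reducing first to a simple $L(\gamma')$, for which $L_n(\gamma')$ is honestly finite-dimensional by \Cref{htheorem-simples}, is precisely what circumvents this. The remaining verifications (the reduction to simple subquotients, compatibility of the various $\phi_{i,m}^\pm$-actions, and the bookkeeping of supports) I expect to be routine.
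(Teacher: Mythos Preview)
The paper does not prove this proposition at all: it is stated with a terminal \qed and attributed verbatim to Proposition~3.11 and Corollary~3.12 of \cite{H-folding}, so there is no in-paper argument to compare your sketch against.

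That said, your outline is the expected one and is sound. Reducing to a simple $L(\gamma')$ and then, for each fixed $i$, passing to the finite-dimensional $\hat{\mathcal{U}}_{[-n,n]}$-module $L_n(\gamma')$ of \Cref{htheorem-simples} in order to invoke \Cref{proposition-drinfeld} is exactly how one transports the rational form from finite rank to $\uqslinf$. Your separate argument for ``$Q_j=R_j=1$ for almost all $j$'' is correct: once $\{j-1,j,j+1\}$ avoids both $\Supp(\beta)$ and the support of $\lambda$, one has $\omega(\alpha_j^\vee)=0$ and $\omega+\alpha_j\notin\lambda-Q^+$, so $x_{j,r}^+V_\omega=0$; integrability of the $\uq{\mathfrak{sl}_2}$-restriction then forces $x_{j,0}^-V_\omega=0$, and the Drinfeld commutator relation gives $\phi_{j,\pm m}^\pm\vert_{V_\omega}=0$ for $m>0$, hence $\gamma_{j,\pm m}^\pm=\delta_{m,0}$. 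The only point requiring care is the one you already flag: passing from an arbitrary $V\in\Oint(\uqslinf)$ (which need not have finite length) to a simple subquotient in which $\gamma$ still occurs. This is handled by the category-$\mathcal{O}$ type structure of $\Oint(\uqslinf)$ developed in \cite{H-folding}, to which you correctly defer.
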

	
	We consider the formal variables \(Y_{i,a}\) with \(i \in I_\infty\) and \(a \in \mathbb{C}^*\). We let \(\Asub{\infty}\) and \(\Ysub{\infty}\) be defined as in \Cref{Section-quantum-groups} (see also \Cref{remark-qchar-A}). \par 
	For a \(V\) in \(\Oint(\uqslinf)\) and an \(l\)-weight \(\gamma\) such that \(V_\gamma\), we write \(Q_i(u) = \prod_{a\in\mathbb{C}^*} (1-ua)^{\mu_{i,a}}\) and \(R_i(u) = \prod_{a\in\mathbb{C}^*} (1-ua)^{\nu_{i,a}}\) for the polynomials of \Cref{Hproposition-drinfeld}. We associate to \(\gamma\) the monomial
	\begin{align*}
		m_\gamma = \prod_{i\in I_\infty, \  a\in\mathbb{C}^*}Y_{i,a}^{\mu_{i,a}-\nu_{i,a}} \in \Asub{\infty},
	\end{align*}
	and define the \(q\)-character of \(V\) to be \(\chi_q(V) = \sum_{\gamma}\dim(V_\gamma)m_\gamma\). We identify the \(q\)-characters of \(\hat{\mathcal{U}}_{[-n,n]}\)-modules in \(\Oint(\uqsl{2n+1})\) as elements of \(\Ysub{\infty}\) in an obvious way. \par 
	For an element \(\chi = \sum_{j}c_j m_j \in \Ysub{\infty}\) with \(c_j \in \mathbb{Z}\) and \(m_j \in \Asub{\infty}\), and a weight \(\lambda \in P\), we set 
	\begin{align*}
		\chi_\lambda = \sum_{j,\  \omega(m_j)=\lambda} c_j m_j.
	\end{align*}
	\begin{proposition}[Proposition 3.11 in \cite{H-folding}]\label{hproposition-char}
		\emergencystretch 3em {Let \(L(\gamma)\) be a simple \(\uqslinf\)-module in \(\Oint(\uqslinf)\), and let \(L_n(\gamma)\) be the simple \(\uqsl{2n+2}\)-modules of \Cref{htheorem-simples}. Then for any weight \(\lambda \in P\),
		\begin{align*}
			\chi_q(L(\gamma))_\lambda = \chi_q(L_n(\gamma))_\lambda
		\end{align*}
		for \(n\) large enough.} \qed
	\end{proposition}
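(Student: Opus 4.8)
The plan is to compare the $q$-characters of $L(\gamma)$ and of $L_n(\gamma)$ one weight space at a time, using that for $n$ large these weight spaces coincide as vector spaces and that the Cartan operators $\phi_{i,\pm m}^\pm$ with $i$ far from the support of the weight act on them by fixed scalars. Fix $\lambda' \in P$ and let $\lambda$ be the weight of the $l$-highest weight vector $v_\gamma$ of $L(\gamma)$, so that all weights of $L(\gamma)$ lie in $\lambda - Q^+$. Choose $N$ with $P_i = 1$ for $|i| > N$. If $\lambda - \lambda' \notin Q^+$ then $L(\gamma)_{\lambda'} = 0$, while for $n \geq N$ the $l$-highest monomial of $L_n(\gamma)$ equals $m_\gamma$, so by \Cref{Htheorem-char-order} every monomial of $\chi_q(L_n(\gamma))$ has weight in $\lambda - Q^+$ and $\chi_q(L_n(\gamma))_{\lambda'} = 0$ as well; both sides vanish and we are done. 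Hence assume $\lambda - \lambda' = \sum_i c_i \alpha_i \in Q^+$, enlarge $N$ so that in addition the finite set $S = \{i : c_i > 0\}$ lies in $[-N,N]$, and take $n \geq N+1$.

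First I would show $L(\gamma)_{\lambda'} = L_n(\gamma)_{\lambda'}$ as vector spaces. The module $L(\gamma)$ is generated over $\uqslinf^-$ by $v_\gamma$ (since $\uqslinf^+(v_\gamma) = 0$ and $v_\gamma$ is a joint eigenvector for the $k_h$ and the $h_{i,m}$), so $L(\gamma)_{\lambda'}$ is spanned by vectors $x_{i_1,r_1}^- \cdots x_{i_k,r_k}^- v_\gamma$ with $\sum_j \alpha_{i_j} = \lambda - \lambda'$; linear independence of the $\alpha_i$ forces every index $i_j$ to lie in $S \subseteq [-n,n]$, so each spanning vector lies in $\hat{\mathcal{U}}_{[-n,n]} v_\gamma = L_n(\gamma)$ (see \Cref{htheorem-simples}), and the reverse inclusion $L_n(\gamma)_{\lambda'} \subseteq L(\gamma)_{\lambda'}$ is clear. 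Next I would show that for $i \notin [-n,n]$ the operator $\phi_{i,\pm m}^\pm$ acts on all of $L(\gamma)_{\lambda'}$ by the scalar $\gamma_{i,\pm m}^\pm$: since $n \geq N+1$, the set $\{i-1,i,i+1\}$ is disjoint from $S$, so $C_{i,j} = 0$ for every $j \in S$, and by $k_h x_{j,r}^\pm k_{-h} = q^{\pm\alpha_j(h)} x_{j,r}^\pm$ together with $[h_{i,m}, x_{j,r}^\pm] = \pm\tfrac{1}{m}[m C_{i,j}]_q x_{j,m+r}^\pm$ both $k_{\pm\alpha_i^\vee}$ and all the $h_{i,m}$ commute with $x_{j,r}^\pm$, hence with each factor of the spanning vectors above, and therefore act on each of them by the scalar by which they act on $v_\gamma$; as $\phi_{i,\pm m}^\pm$ is a polynomial expression in $k_{\pm\alpha_i^\vee}$ and the $h_{i,m}$, the claim follows, and since $P_i = 1$ this scalar is the trivial one, so $Y_{i,a}$ does not occur in the monomial $m_{\gamma'}$ of any $l$-weight $\gamma'$ of $L(\gamma)_{\lambda'}$. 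Combining the two points: on the common finite-dimensional space $L(\gamma)_{\lambda'} = L_n(\gamma)_{\lambda'}$ the operators $\phi_{i,\pm m}^\pm$ with $i \in [-n,n]$ are the restrictions of the same elements of $\uqslinf$ in both settings, while those with $i \notin [-n,n]$ act as scalars; hence the decomposition of this space into $l$-weight spaces (generalized joint eigenspaces of the $\phi_{i,\pm m}^\pm$) is the same whether computed in $\uqslinf$ or in $\hat{\mathcal{U}}_{[-n,n]}$, and the monomial $m_{\gamma'}$ attached to a given summand agrees in both cases. Summing the summands of weight $\lambda'$ yields $\chi_q(L(\gamma))_{\lambda'} = \chi_q(L_n(\gamma))_{\lambda'}$.

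The one delicate point is the second step: one must rule out that forgetting the Cartan operators $\phi_{i,\pm m}^\pm$ with $i$ outside a finite window could merge two distinct $l$-weight spaces inside the fixed finite-dimensional space $L(\gamma)_{\lambda'}$. This is exactly what the vanishing $[m C_{i,j}]_q = 0$ for non-adjacent $i,j$ delivers, but it genuinely relies on the explicit spanning set of $L(\gamma)_{\lambda'}$ from the first step; granting that, the rest is routine manipulation of the definitions of the $q$-character and of the monomials $m_\gamma$.
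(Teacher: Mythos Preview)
The paper does not give its own proof of this proposition: it is quoted from \cite{H-folding} (Proposition~3.11 there) and closed with a bare \qed. So there is nothing in the present paper to compare your argument against.

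That said, your argument is sound and is the expected one. For $n$ large the weight space $L(\gamma)_{\lambda'}$ is spanned by monomials $x_{i_1,r_1}^-\cdots x_{i_k,r_k}^- v_\gamma$ with all $i_j$ in the finite support $S\subseteq[-n,n]$ of $\lambda-\lambda'$, hence coincides with $L_n(\gamma)_{\lambda'}$; and for $|i|>n$ the operators $k_{\pm\alpha_i^\vee}$ and $h_{i,m}$ commute with every $x_{j,r}^-$ occurring (because $C_{i,j}=0$), so the $\phi_{i,\pm m}^\pm$ act by the trivial scalar on the whole space. This forces the generalized eigenspace decomposition for the full family $(\phi_{i,\pm m}^\pm)_{i\in I_\infty}$ to agree with that for the subfamily $i\in[-n,n]$, and the monomials $m_{\gamma'}$ attached to each piece to agree as well, since no $Y_{i,a}$ with $|i|>n$ appears. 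The only point worth tightening is the identification of $L_n(\gamma)_{\lambda'}$: one should note that distinct $\uqslinf$-weights in $\lambda-\sum_{i\in[-n,n]}\mathbb{N}\alpha_i$ restrict to distinct weights for $\hat{\mathcal{U}}_{[-n,n]}$, so no collapsing of weight spaces occurs upon restriction; but this is immediate from the linear independence of the $\alpha_i$.
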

	
	\begin{remark}\label{remark-char-product}
		Let \(L(\gamma_1)\) and \(L(\gamma_2)\) be two simple \(\uqslinf\)-modules in \(\Oint(\uqslinf)\). Then 
		\begin{align*}
			(\chi_q(L(\gamma_1))\chi_q(L(\gamma_2)))_\lambda = (\chi_q(L_n(\gamma_1))\chi_q(L_n(\gamma_2)))_\lambda 
		\end{align*}
		for \(n\) large enough. In particular, we can use known multiplication formulas for \(\uqsl{n}\) to deduce analogous formulas for \(\uqslinf\). 
	\end{remark}
	
	For every dominant monomial \(m \in \Asub{\infty}\) there exists a unique \(\gamma \in P_l^+\) such that \(m = m_{\gamma}\); we will write \(L(m) = L(\gamma)\). For \(i\in I_\infty\), \(a\in \mathbb{C}^*\), we let \(A_{i,a} \in \Asub{\infty}\) be the monomial \(Y_{i,aq^{-1}}Y_{i,aq}Y_{{i-1},a}^{-1}Y_{{i+1},a}^{-1}\). We define a partial order on \(\Asub{\infty}\)  by
	\begin{align*}
		m \leq m' \iff m'm^{-1} \in \mathbb{Z}[A_{i,a}^{-1} \mid i \in I_\infty, a\in \mathbb{C}^*].
	\end{align*}
	
	\begin{corollary}\label{corollary-order}
		Let \(m \in \Asub{\infty}\) be a dominant monomial and let \(m'\) be a monomial which appears in \(\chi_q(L(m))\). Then \(m' \leq m\).
	\end{corollary}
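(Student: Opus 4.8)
The plan is to deduce \Cref{corollary-order} from its finite-rank counterpart \Cref{Htheorem-char-order}, using the exhaustion \(L(m)=\bigcup_{n}L_n(m)\) of \Cref{htheorem-simples} together with the stabilization of \(q\)-characters recorded in \Cref{hproposition-char}. The first step is a small bookkeeping lemma: for a monomial \(p=\prod_{i,a}Y_{i,a}^{u_{i,a}(p)}\in\Asub{\infty}\), set \(\deg(p)=\sum_{i,a}u_{i,a}(p)\in\mathbb{Z}\), so that \(\deg(p)=\sum_i\omega(p)(\alpha_i^\vee)\). I would check that every monomial \(m'\) occurring in \(\chi_q(L(m))\) satisfies \(\deg(m')=\deg(m)\): since \(\omega(m')\in\omega(m)-Q^+\), it suffices to note that in type \(A_\infty\) subtracting a simple root \(\alpha_j\) changes each pairing \(\omega(\,\cdot\,)(\alpha_i^\vee)\) by \(-C_{i,j}\), and the \(j\)-th column of the Cartan matrix has entries summing to \(2-1-1=0\); hence \(\deg\) is unchanged. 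One also notes that each \(A_{i,a}\in\Asub{\infty}\) has \(\deg(A_{i,a})=0\), being a product of two \(Y\)'s and two inverse \(Y\)'s.

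Next, fix a monomial \(m'\) of \(\chi_q(L(m))\) and set \(\lambda=\omega(m')\). As \(m\) and \(m'\) involve only finitely many of the variables \(Y_{i,a}\), I would choose \(n\) large enough that both are supported on nodes in \([-(n-1),n-1]\) and that \Cref{hproposition-char} gives \(\chi_q(L(m))_\lambda=\chi_q(L_n(m))_\lambda\); in particular \(m'\) appears in \(\chi_q(L_n(m))\). By \Cref{htheorem-simples}, \(L_n(m)\) is the finite-dimensional simple \(\uqsl{2n+2}\)-module with highest \(\ell\)-weight encoded by the dominant monomial \(m\in\Asl{2n+2}\). Applying \Cref{Htheorem-char-order} to \(\uqsl{2n+2}\) then yields \(m'm^{-1}=\prod_{i\in[-n,n],\,a}A_{i,a}^{-c_{i,a}}\) for integers \(c_{i,a}\geq 0\), almost all zero, where the \(A_{i,a}\) are the generators of \(\Asl{2n+2}\).

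The one genuinely delicate point — and the step I expect to be the main obstacle — is comparing these finite-rank generators \(A_{i,a}\in\Asl{2n+2}\) with the infinite-rank ones \(A_{i,a}\in\Asub{\infty}\): they agree for interior nodes \(-n<i<n\), but at the boundary nodes \(i=\pm n\) the \(\Asl{2n+2}\)-version omits the factor \(Y_{\pm(n+1),a}^{-1}\), so it has degree \(1\) instead of \(0\). This is precisely where the degree lemma closes the argument: taking \(\deg\) of the displayed identity and using \(\deg(m')=\deg(m)\) forces \(\sum_a(c_{n,a}+c_{-n,a})=0\), hence \(c_{\pm n,a}=0\) for all \(a\) by positivity. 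Thus only interior \(A_{i,a}\) occur, for which the \(\Asl{2n+2}\)- and \(\Asub{\infty}\)-versions coincide, giving \(m'm^{-1}\in\mathbb{Z}[A_{i,a}^{-1}\mid i\in I_\infty,\,a\in\mathbb{C}^*]\), i.e.\ \(m'\leq m\), as claimed. A secondary point to keep in mind is that the identification of \(\chi_q(L_n(m))\) as an element of \(\Ysub{\infty}\) used in \Cref{hproposition-char} is the one matching the formal variables \(Y_{i,a}\) on the nose, but this is built into the conventions of \cite{H-folding} and requires no extra work.
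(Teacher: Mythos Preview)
Your proof is correct and follows the same strategy as the paper's one-line argument, which simply cites \Cref{hproposition-char} and \Cref{Htheorem-char-order}. You have been more careful than the paper in making explicit the passage from the finite-rank Nakajima order to the \(A_\infty\) one: your degree homomorphism neatly shows that the boundary generators \(A_{\pm n,a}\) cannot occur, a point the paper leaves implicit.
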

	\begin{proof}
		This follows from \Cref{hproposition-char} and \Cref{Htheorem-char-order}.
	\end{proof}
	
}

\section{Folding of quivers and cluster algebras}\label{folding}
In this section, we will discuss a certain notion of folding of locally finite infinite quivers and associated cluster algebras. Because we will work with infinite quivers and their mutations, we need to discuss the folding in a relatively general setting. Most of this section is inspired by \cite{FWZ21}. \par

\begin{definition}\label{G-admissible}
	Let \(Q\) be a locally finite quiver and let \(G\subseteq \Sym(Q_0)\) be a group acting on the vertex set \(Q_0\). We say that \(Q\) is \emph{strongly \(G\)-admissible} if the following hold
	\begin{enumerate}
		\item\label{G-inv} \(\Arr_Q(v_1,v_2) = \Arr_Q(gv_1,gv_2)\) for any \(v_1,v_2 \in Q_0\), \(g\in G\),
		\item\label{stab} the action of \(G\) on \(Q_0\) is free, that is, the stabilizer of any vertex \(v\in Q_0\) is trivial, 
		\item\label{virt-loop} \(\Arr_Q(Gv,Gv) = 0\) for any \(v \in Q_0\),
		\item\label{virt-cyc} \(\Arr_Q(Gv_1,v_2)\cdot\Arr_Q(v_2,Gv_1) = 0\) for any \(v_1,v_2 \in Q_0\). 
	\end{enumerate}
	If, moreover \(Q\) has a frozen part \(F\), then the ice quiver \((Q,F)\) is said to be \emph{strongly \(G\)-admissible} if \(Q\) is strongly \(G\)-admissible and \(G(F) = F\). In that case, each \(G\)-orbit is either mutable or frozen.
\end{definition}

\begin{definition}
	Let \(Q\) be a quiver and \(G\) a group acting on \(Q_0\). Suppose \(Q\) is \(G\)-admissible. We define a new quiver \(Q^G\) with set of vertices \(Q_0/G\) and arrows \(\Arr_{Q^G}(Gv_1,Gv_2) = \Arr_Q(Gv_1,v_2)\) for any \(v_1,v_2 \in Q_0\). We say \(Q^G\) is \emph{the folding of \(Q\) with respect to \(G\)}. \par
	If \(Q\) has a frozen part \(F\), we let \(F^G\) be the set of frozen vertices \(F/G\) of \(Q^G\). We say \((Q^G,F^G)\) is \emph{the folding of \((Q,F)\) with respect to \(G\)}. 
\end{definition}

\begin{remark}
	Conditions \ref{G-inv} and \ref{stab} imply \(\Arr_Q(Gv_1,v_2) = \Arr_Q(v_1,Gv_2)\) for any \(v_1,v_2 \in Q_0\). Indeed,
	\begin{align*}
		\Arr_Q(Gv_1,v_2) & = \sum_{v \in Gv_1}\Arr(v,v_2) = \sum_{g \in G}\Arr(gv_1,v_2) \\ & = \sum_{g \in G}\Arr(v_1,g^{-1}v_2) = \sum_{v' \in Gv_2}\Arr(v_1,v') = \Arr_Q(v_1,Gv_2).
	\end{align*}
\end{remark}

\begin{remark}
	Our definition is different than that of \cite{FWZ21}. Here we allow the quiver \(Q\) to have infinitely many vertices, but we require that it folds onto a (non-valued) quiver (condition \ref{stab}). There is no evident way to fold an infinite quiver onto a (maybe valued) quiver without imposing some symmetry conditions. We use the word ``strongly" to distinguish between our notion of admissibility and the one in \cite{FWZ21}.
\end{remark}
\begin{remark}
	Conditions \ref{virt-loop} and \ref{virt-cyc} ensure that the quiver \(Q\) has no ``virtual" loops or \(2\)-cycles. This means that there are no arrows nor oriented 2-paths between two vertices belonging to the same \(G\)-orbit. In particular, this ensures that the folded quiver has no loops or \(2\)-cycles.
\end{remark}

\begin{lemma}
	Let \(Q\) be a quiver and \(G\subseteq \Sym(Q_0)\), such that \(Q\) is strongly \(G\)-admissible. Then the following hold.
	\begin{enumerate}
		\item Let \(v_1\) and \(v_2\) be two vertices in the same \(G\)-orbit, then \(\mu_{v_1}\mu_{v_2}Q = \mu_{v_2}\mu_{v_1}Q\). 
		\item Let \(w_1\) and \(w_2\) be two vertices in \(Q\), then \(\Arr_Q(w_1,Q_0)\cdot \Arr_Q(Q_0,w_2) < \infty\).
		\item Let \(w_1\), \(w_2\) and \(v\) be vertices in \(Q\), then \(\Arr_{\mu_{gv}Q}(w_1,w_2) = \Arr_{Q}(w_1,w_2)\) for almost all \(g \in G\). 
	\end{enumerate}
\end{lemma}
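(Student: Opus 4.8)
The three assertions should all be reduced to finiteness statements that follow from local finiteness of $Q$ together with the freeness of the $G$-action. I would prove them in the order (2), (3), (1), since the later parts rely on the earlier finiteness facts.

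For part (2): fix $w_1, w_2 \in Q_0$. The product $\Arr_Q(w_1, Q_0) \cdot \Arr_Q(Q_0, w_2)$ is zero unless both factors are nonzero, i.e.\ unless there is at least one arrow out of $w_1$ and at least one arrow into $w_2$. But local finiteness says precisely that $\Ed_Q(w_1, Q_0) < \infty$ and $\Ed_Q(w_2, Q_0) < \infty$, so in particular $\Arr_Q(w_1, Q_0) \leq \Ed_Q(w_1, Q_0) < \infty$ and $\Arr_Q(Q_0, w_2) \leq \Ed_Q(w_2, Q_0) < \infty$; the product of two finite numbers is finite. (No symmetry hypothesis is actually needed here — just local finiteness.)

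For part (3): the point is that mutating at $gv$ only changes arrows incident to neighbours of $gv$. More precisely, $\Arr_{\mu_{gv}Q}(w_1,w_2) = \Arr_Q(w_1,w_2)$ whenever $gv$ is adjacent to neither $w_1$ nor $w_2$, because the mutation rule at $gv$ leaves $\Arr(x,y)$ untouched when $\Arr_Q(x,gv) = \Arr_Q(gv,x) = 0$ or $\Arr_Q(y,gv) = \Arr_Q(gv,y) = 0$ — indeed then $b_{x,y} = \Arr_Q(x,y)$ and the "$\max\{0, b_{x,y} - b_{y,x}\}$" branch (or the frozen branch) returns the original value. So it suffices to show that $gv$ is adjacent to $w_1$ or $w_2$ for only finitely many $g \in G$. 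By local finiteness, $w_1$ has finitely many neighbours, say the set $N_1$, and $w_2$ has finitely many neighbours $N_2$. If $gv$ is adjacent to $w_1$ then $gv \in N_1$; since the $G$-action is free (condition \ref{stab}), the map $g \mapsto gv$ is injective, so there are at most $\#N_1$ such $g$, and similarly at most $\#N_2$ with $gv$ adjacent to $w_2$. Hence the exceptional set of $g$ has size at most $\#N_1 + \#N_2 < \infty$.

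For part (1): let $v_1, v_2$ be in the same $G$-orbit, so $v_2 = g v_1$ for some $g \in G$. Condition \ref{virt-loop} gives $\Arr_Q(v_1, v_2) = \Arr_Q(v_2, v_1) = 0$, so $v_1$ and $v_2$ are non-adjacent, and condition \ref{virt-cyc} gives that there is no oriented $2$-path between them through any vertex, i.e.\ for every $w$ we have $\Arr_Q(v_1,w)\Arr_Q(w,v_2) = \Arr_Q(v_2,w)\Arr_Q(w,v_1) = 0$. These are exactly the hypotheses under which mutations at $v_1$ and $v_2$ commute: one checks from the mutation formula that $\mu_{v_1}$ does not alter the arrows incident to $v_2$ (since $\mu_{v_1}$ only changes $\Arr(x,y)$ for $x,y$ both neighbours of $v_1$, and if one of them is $v_2$ then $v_2$ is a neighbour of $v_1$, contradicting non-adjacency), and symmetrically $\mu_{v_2}$ does not alter arrows incident to $v_1$; the absence of oriented $2$-paths through $v_1$ or $v_2$ then guarantees that the two mutation rules act on disjoint data, so $\mu_{v_1}\mu_{v_2}Q = \mu_{v_2}\mu_{v_1}Q$. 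I expect the only mildly delicate point to be verifying carefully from the piecewise mutation formula that "$v_1$ non-adjacent to $v_2$ and no $2$-path between them" really does imply commutativity — this is the standard fact that mutations at non-adjacent vertices commute, adapted to the ice-quiver convention used here, and it is the one step worth writing out in full.
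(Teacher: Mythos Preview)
Your proposal is correct and follows essentially the same approach as the paper, which is very terse (for part (1) it simply says ``follows immediately from condition \ref{virt-loop}'' and cites \cite{FWZ21}). Two minor comments on your treatment of part (1). First, invoking condition \ref{virt-cyc} is unnecessary: non-adjacency of $v_1$ and $v_2$ (which follows from \ref{virt-loop} alone) already suffices for mutations at $v_1$ and $v_2$ to commute, as the paper indicates. Second, your ``disjoint data'' intuition is slightly misleading: a vertex $w$ can be a common neighbour of both $v_1$ and $v_2$, so both mutations may alter arrows incident to $w$. Commutativity holds not because the mutations act on disjoint arrows, but because the signed arrow-count $\Arr(x,y)-\Arr(y,x)$ is changed additively by $\mu_{v_1}$ and $\mu_{v_2}$ (each adds a term depending only on $Q$, since non-adjacency guarantees that $\mu_{v_1}$ leaves arrows at $v_2$ unchanged and vice versa), and the $\max\{0,-\}$ only sees this difference. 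This is exactly the standard computation you flag as ``worth writing out in full.''
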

\begin{proof}
	\begin{enumerate}
		\item This follows immediately from condition \ref{virt-loop} of \Cref{G-admissible}. See also \cite{FWZ21}. 
		\item The inequality \(\Arr_Q(w_1,Q_0)\cdot \Arr_Q(Q_0,w_2) < \infty\) is an immediate consequence of the local finiteness of \(Q\). 
		\item The inequality \(\Arr_Q(w_1,Q_0)\cdot \Arr_Q(Q_0,w_2) < \infty\) implies \(\Arr_Q(w_1,gv)\cdot \Arr_Q(gv,w_2) = 0\) for almost all \(g \in G\). The equality \(\Arr_{\mu_{gv}Q}(w_1,w_2) = \Arr_{Q}(w_1,w_2)\) for almost all \(g \in G\) is a consequence of the mutation rule. \qedhere
	\end{enumerate}
\end{proof}

\begin{definition}
	Let \(Q\) be a quiver and \(G\subseteq \Sym(Q_0)\), such that \(Q\) is strongly \(G\)-admissible. Given a \(G\)-orbit \(K\), we define \(\hmu_K Q\), \emph{the orbit-mutation of \(Q\) at \(K\)} as the quiver obtained by applying mutation at all vertices of \(K\). In view of the lemma above, this operation is well defined.
\end{definition}

\begin{lemma}\label{mutation-behavior}
	Let \(Q\) be a quiver and \(G\subseteq \Sym(Q_0)\). Suppose \(Q\) is strongly \(G\)-admissible and let \(K\) be a \(G\)-orbit. Then the following statements hold.
	\begin{enumerate}
		\item The quiver \(\hmu_K Q\) satisfies conditions \ref{G-inv}, \ref{stab} and \ref{virt-loop} of \Cref{G-admissible} with respect to \(G\). 
		\item \(\Arr_{\mu_K (Q^G)} (Gv_1,Gv_2) \leq \Arr_{\hmu_K Q}(Gv_1,v_2)\) for every \(v_1,v_2 \in Q_0\).
		\item \(\hmu_K Q\) satisfies condition \ref{virt-cyc} of \Cref{G-admissible} if and only if \begin{align*}
			\Arr_{\hmu_K Q}(Gv_1,v_2) = \Arr_{\mu_K (Q^G)} (Gv_1,Gv_2)
		\end{align*} 
		for every \(v_1,v_2 \in Q_0\). In this case \(\mu_K(Q^G) = (\hmu_K Q)^G\).
	\end{enumerate}
\end{lemma}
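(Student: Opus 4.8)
The plan is to work through the three statements in order, since each builds on the previous. For statement (1), the key point is that orbit-mutation commutes with the $G$-action: because $Q$ is strongly $G$-admissible, for any $g \in G$ the vertex set $K$ is $G$-stable, and applying $g$ to $\hmu_K Q$ permutes the mutations being performed, so condition \ref{G-inv} survives. Condition \ref{stab} is immediate since the vertex set is unchanged. For condition \ref{virt-loop}, I would argue that a new arrow between two vertices $v_1, v_2 \in Gv$ created by mutating at some $k \in K$ would arise from a path $v_1 \to k \to v_2$; but since $Q$ already satisfies \ref{virt-loop}, there is no arrow $v_1 \to k$ or $k \to v_2$ unless $k \notin Gv$, and in fact one needs to chase that no such path can exist because $k$ lies in the orbit $K$ which is itself a single $G$-orbit — the relevant observation is that creating an arrow inside $Gv$ requires a two-path through $K$, and I would show the contributions from the various $k \in K$ that could produce such a two-path either vanish (by local finiteness only finitely many contribute) or cancel. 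This is the place where I would be most careful; it is essentially the statement that orbit-mutation of a quiver with no virtual loops has no virtual loops, which should follow from \ref{virt-cyc} of the original $Q$ applied pathwise.

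For statement (2), I would compute both sides. The left side $\Arr_{\mu_K(Q^G)}(Gv_1, Gv_2)$ is obtained by a single mutation of the folded quiver $Q^G$ at the vertex $K$, using the mutation formula with $b_{Gv_1, Gv_2} = \Arr_{Q^G}(Gv_1, Gv_2) + \Arr_{Q^G}(Gv_1, K)\cdot \Arr_{Q^G}(K, Gv_2)$, then taking $\max\{0, b_{Gv_1,Gv_2} - b_{Gv_2,Gv_1}\}$. The right side $\Arr_{\hmu_K Q}(Gv_1, v_2)$ counts arrows from the orbit $Gv_1$ to the single vertex $v_2$ after mutating at all of $K$. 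Performing the mutations at the vertices of $K$ one at a time (they commute by \ref{virt-loop}), each mutation at $k \in K$ contributes a term to the arrow count from $Gv_1$ to $v_2$ of the form involving $\Arr(Gv_1, k)\cdot\Arr(k, v_2)$, summed over $k \in K$, and using the identity $\Arr_Q(Gv_1, v_2) = \Arr_Q(v_1, Gv_2)$ from the remark together with $\sum_{k \in K}\Arr_Q(Gv_1, k)\cdot\Arr_Q(k, v_2) = \Arr_Q(Gv_1, K)\cdot\Arr_Q(K, v_2)$ one matches these with the folded quantities $\Arr_{Q^G}(Gv_1, K)\cdot\Arr_{Q^G}(K, Gv_2)$. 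The inequality then comes from the fact that on the folded side we take $\max\{0, b - b'\}$ where both $b$ and $b'$ are computed \emph{after} subtraction, whereas performing the mutations at $K$ one by one in $\hmu_K Q$ can only remove $2$-cycles \emph{after} all of $K$ has been processed, so the intermediate quiver may retain more arrows; in other words, $\hmu_K Q$ is a ``quiver before cancellation'' and the folded mutation is ``after cancellation,'' which gives $\leq$.

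Statement (3) is then a bookkeeping conclusion: the inequality in (2) is an equality exactly when no spurious arrows survive, i.e. when there is no pair $v_1, v_2$ with arrows in both directions between their orbits in $\hmu_K Q$, which is precisely condition \ref{virt-cyc} for $\hmu_K Q$. If condition \ref{virt-cyc} holds for $\hmu_K Q$ then $\hmu_K Q$ is strongly $G$-admissible (combining with (1)), so $(\hmu_K Q)^G$ is defined, and by the arrow-count identity just established, $\Arr_{(\hmu_K Q)^G}(Gv_1, Gv_2) = \Arr_{\hmu_K Q}(Gv_1, v_2) = \Arr_{\mu_K(Q^G)}(Gv_1, Gv_2)$, giving $\mu_K(Q^G) = (\hmu_K Q)^G$. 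Conversely, if the equality of arrow counts holds for all $v_1, v_2$, then since $\mu_K(Q^G)$ has no $2$-cycles (the folded quiver mutation always produces a genuine quiver), the orbit-level arrow counts of $\hmu_K Q$ have no opposing pairs, which forces \ref{virt-cyc}. I expect the main obstacle to be the careful pathwise analysis in statement (1) — tracking exactly which two-paths through $K$ can create virtual loops and confirming they cannot, given only that the \emph{original} $Q$ satisfies \ref{virt-loop} and \ref{virt-cyc} — since everything else is a direct unwinding of the mutation formula combined with the averaging identity $\Arr_Q(Gv_1, v_2) = \Arr_Q(v_1, Gv_2)$.
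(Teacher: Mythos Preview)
Your plan follows the same route as the paper, and part (3) matches it essentially verbatim. Two places where the paper is sharper than your sketch are worth noting.

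For part (1), the paper crystallizes your mutate-one-at-a-time idea into a closed formula: for $v_1,v_2\notin K$,
\[
\Arr_{\hmu_K Q}(v_1,v_2)=\max\{0,\,a_{v_1,v_2}-a_{v_2,v_1}\},\qquad a_{w_1,w_2}=\Arr_Q(w_1,w_2)+\sum_{k\in K}\Arr_Q(w_1,k)\Arr_Q(k,w_2),
\]
from which conditions \ref{G-inv} and \ref{virt-loop} are read off. Your instinct that \ref{virt-loop} for $\hmu_K Q$ needs \ref{virt-cyc} of $Q$ is right: for $v_1,v_2$ in the same orbit, each term $\Arr_Q(v_1,k)\Arr_Q(k,v_2)$ vanishes outright (one factor is zero by \ref{virt-cyc}), rather than contributions cancelling.

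For part (2), the closed formula also tidies up your argument. Summing the \emph{signed} arrow count over the orbit gives
\[
\Arr_{\hmu_K Q}(Gv_1,v_2)-\Arr_{\hmu_K Q}(v_2,Gv_1)=\sum_{g\in G}\bigl(a_{gv_1,v_2}-a_{v_2,gv_1}\bigr),
\]
and the paper checks this equals the folded signed difference $b_{Gv_1,Gv_2}-b_{Gv_2,Gv_1}$; the inequality is then just subadditivity of the positive part, $\max\{0,\sum_g x_g\}\le\sum_g\max\{0,x_g\}$. This is what your ``before/after cancellation'' language is reaching for, though note that $\hmu_K Q$ already \emph{has} vertexwise cancellation---what remains uncancelled is only at the orbit level. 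One notational slip: your identity $\sum_{k\in K}\Arr_Q(Gv_1,k)\Arr_Q(k,v_2)=\Arr_Q(Gv_1,K)\Arr_Q(K,v_2)$ is off by a factor of $|G|$; the correct right-hand side is $\Arr_{Q^G}(Gv_1,K)\cdot\Arr_{Q^G}(K,Gv_2)=\Arr_Q(v_1,K)\cdot\Arr_Q(K,v_2)$.
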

\begin{proof} \begin{enumerate}
	\item Condition \ref{stab} holds for \(\hmu_K Q \) trivially. We can calculate the arrows in the orbit-mutated quiver using the following formula
	\begin{align*}
		\Arr_{\hmu_KQ}(v_1,v_2) = \begin{cases}
			\Arr_{Q}(v_2,v_1) & \text{if \(v_1 \in K\) or \(v_2 \in K\)} \\
			\max\{0, a_{v_1,v_2} -a_{v_2,v_1}\} & \text{otherwise}
		\end{cases},
	\end{align*}
	where \(a_{w_1,w_2} = \Arr_{Q}(w_1,w_2) + \sum_{k \in K} \Arr_{Q}(w_1,k)\cdot \Arr_{Q}(k,w_2)\) for \(w_1,w_2\in Q_0\). Conditions \ref{G-inv} and \ref{virt-loop} then follow immediately.  \par
	\item If \(v_1\) or \(v_2\) belong to \(K\), it is clear that \(\Arr_{\hmu_K Q}(Gv_1,v_2) =  \Arr_{\mu_K (Q^G)} (Gv_1,Gv_2)\).
	So suppose \(v_1,v_2 \notin K\). If \(\Arr_{\mu_K(Q^G)}(Gv_1,Gv_2) = 0\) the statement is trivial. Otherwise, following the definition of mutation, we must have
	\begin{align*}
		\Arr_{\mu_K(Q^G)}(Gv_1,Gv_2) =\ & \Arr_{Q}(Gv_1,v_2) - \Arr_{Q}(v_2,Gv_1) \\ 
		& + \Arr_{Q}(v_1,K)\Arr_{Q}(K,v_2) - \Arr_{Q}(v_2,K)\Arr_{Q}(K,v_1) \\
		=\ & \Arr_{Q}(Gv_1,v_2) - \Arr_{Q}(v_2,Gv_1) \\ 
		& + \sum_{k' \in K}\sum_{k \in K} \left(\Arr_{Q}(v_1,k')\Arr_{Q}(k,v_2)-\Arr_{Q}(v_2,k)\Arr_{Q}(k',v_1)\right) \\
		=\ & \Arr_{Q}(Gv_1,v_2) - \Arr_{Q}(v_2,Gv_1) \\
		& + \sum_{g\in G}\sum_{k \in K} \left(\Arr_{Q}(gv_1,k)\Arr_{Q}(k,v_2)-\Arr_{Q}(v_2,k)\Arr_{Q}(k,gv_1)\right) \\
		=\ & \Arr_{\hmu_KQ}(Gv_1,v_2) - \Arr_{\hmu_KQ}(v_2,Gv_1) \\
		\leq\ & \Arr_{\hmu_KQ}(Gv_1,v_2). 
	\end{align*}
	This proves the desired inequality. \par
	\item Following the previous part, we only need to show the if and only if statement for vertices \(v_1,v_2 \notin K\). \par
	If \(\Arr_{\hmu_KQ}(Gv_1,v_2)\Arr_{\hmu_KQ}(v_2,Gv_1) = 0\), then without loss of generality \(\Arr_{\hmu_KQ}(Gv_1,v_2) = 0\), which implies \(0 = \Arr_{\hmu_KQ}(Gv_1,v_2) \geq \Arr_{\mu_K (Q^G)} (Gv_1,Gv_2)\), in particular 
	\begin{align*}
		\Arr_{\hmu_KQ}(Gv_1,v_2) = \Arr_{\mu_K (Q^G)} (Gv_1,Gv_2).
	\end{align*}
	From the previous part we then deduce that 
	\begin{align*}
		\Arr_{\mu_K (Q^G)} (Gv_2,Gv_1) = \Arr_{\hmu_KQ}(Gv_2,v_1) - \Arr_{\hmu_KQ}(Gv_1,v_2) = \Arr_{\hmu_KQ}(Gv_2,v_1).
	\end{align*} 
	In the other direction, we have 
	\begin{align*}
		\Arr_{\hmu_KQ}(Gv_1,v_2)\Arr_{\hmu_KQ}(v_2,Gv_1) = \Arr_{\mu_K(Q^G)}(Gv_1,Gv_2)\Arr_{\mu_K(Q^G)}(Gv_2,Gv_1) = 0.
	\end{align*}
	When the above holds, the equality \(\mu_K(Q^G) = (\hmu_KQ)^G\) follows immediately. \qedhere
	\end{enumerate}
\end{proof}

\begin{definition}\label{strongly-gf}
	Let \(Q\) be a quiver and \(G\subseteq \Sym(Q_0)\) such that \(Q\) is strongly \(G\)-admissible. Then \(Q\) is said to be \emph{strongly globally foldable with respect to \(G\)} if for any finite sequence of orbits \(K_1,...,K_l\), the quiver \((\hmu_{K_i}\circ \cdots \circ \hmu_{K_1})(Q)\) is strongly \(G\)-admissible, for all \(1 \leq i \leq l\). \par
	Let \(F \subseteq Q_0\) be a frozen part such that \((Q,F)\) is strongly \(G\)-admissible. The ice quiver \((Q,F)\) is said to be \emph{strongly globally foldable with respect to \(G\)} if for any finite sequence of mutable orbits \(K_1,...,K_l\), the quiver \((\hmu_{K_i}\circ \cdots \circ \hmu_{K_1})(Q)\) is strongly \(G\)-admissible, for all \(1 \leq i \leq l\).
\end{definition}

\begin{remark}\label{remark-vitrual-2-cycles}
	According to \Cref{mutation-behavior}, for a strongly \(G\)-admissible quiver \(Q\), the quiver \(\hmu_{K} Q\) is strongly \(G\)-admissible if and only if it satisfies condition \ref{virt-cyc} of \Cref{G-admissible}. That is, if and only if we have not created ``virtual" \(2\)-cycles while mutating at an orbit. Therefore, to verify that a quiver \(Q\) is strongly globally foldable with respect to \(G\), it suffices to verify that we cannot create those virtual \(2\)-cycles by process of orbit-mutation. \par
	Moreover, we notice that the equality 
	\begin{align*}
		\Arr_{\mu_K (Q^G)}(Gv_2,Gv_1) - \Arr_{\mu_K (Q^G)}(Gv_1,Gv_2) = \Arr_{\hmu_K (Q)}(Gv_2,v_1) - \Arr_{\hmu_K (Q)}(Gv_1,v_2)
	\end{align*}
	always holds, so
	\begin{align*}
		\Arr_{\mu_K (Q^G)}(Gv_1,Gv_2) = \Arr_{\hmu_K (Q)}(Gv_1,v_2)  \iff \Arr_{\mu_K (Q^G)}(Gv_2,Gv_1) = \Arr_{\hmu_K (Q)}(Gv_2,v_1).
	\end{align*}
	This will be our strategy later to prove that the infinite quivers belonging to a certain class are strongly globally foldable. \par
	
\end{remark}

Let \((Q,F)\) be an ice quiver and \(G \subseteq \Sym(Q_0)\) such that \((Q,F)\) is strongly globally foldable with respect to \(G\). Let \((Q^G,F^G)\) be the folded quiver. We consider the algebras
\begin{align*}
	\mathcal{F} & = \mathbb{Z}[x_v | v \in F][x_v^{\pm1} \mid v \in Q_0\setminus F], \\
	\mathcal{F}^G & = \mathbb{Z}[x_{Gv} | Gv \in F^G][x_{Gv}^{\pm1} \mid Gv \in Q_0^G\setminus F^G].
\end{align*} 
We associate to \((Q,F)\) and \((Q^G,F^G)\) the cluster algebras \(\mathcal{A}\) and \(\mathcal{A}^G\) with initial clusters \((x_v)_{v\in Q_0}\) and \((x_{Gv})_{Gv \in Q_0^G}\), respectively. We consider these cluster algebras as subalgebras of \(\mathcal{F}\) and \(\mathcal{F}^G\), respectively. \par
Any set of cluster variables of \(\mathcal{A}\) obtained from the initial cluster by an iterated sequence of orbit-mutations is called an \emph{orbit-cluster}. In particular, every cluster variable of an orbit-cluster is a cluster variable in the usual sense. \par
We have the obvious homomorphism \(\psi : \mathcal{F} \rightarrow \mathcal{F}^G\) defined by \(\psi(x_v) = x_{Gv}\). Given a cluster variable \(x\) of \(\mathcal{A}\) which belongs to an orbit-cluster, we call \(\psi(x)\) the \emph{folded cluster variable} of \(x\) in \(\mathcal{A}^G\). This terminology is justified by the following proposition.

\begin{proposition}\label{proposition-cluster-folding}
	We keep the notation as above. Let \(K_1,...,K_l\) be a sequence of mutable orbits in \((Q,F)\). The map \(\psi\) maps the orbit-cluster of \(\mathcal{A}\) obtained by the sequence of orbit-mutations at \(K_1,...,K_l\) to the cluster of \(\mathcal{A}^G\) obtained by the sequence of mutations at \(K_1,...,K_l\).
\end{proposition}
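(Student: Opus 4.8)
The plan is to induct on the length $l$ of the sequence of orbit-mutations. The base case $l=0$ is the statement that $\psi$ sends the initial cluster $(x_v)_{v\in Q_0}$ of $\mathcal{A}$ to the initial cluster $(x_{Gv})_{Gv\in Q_0^G}$ of $\mathcal{A}^G$, which is immediate from the definition $\psi(x_v)=x_{Gv}$. For the inductive step, suppose the orbit-cluster $\underline{u}$ obtained from $K_1,\dots,K_{l-1}$ maps under $\psi$ to the cluster $\underline{u}^G$ of $\mathcal{A}^G$ obtained by mutating at $K_1,\dots,K_{l-1}$, and that moreover (this is part of what the induction must carry) the underlying ice quiver $Q'=(\hmu_{K_{l-1}}\circ\cdots\circ\hmu_{K_1})(Q)$ satisfies $(Q')^G = \mu_{K_{l-1}}\cdots\mu_{K_1}(Q^G)$ — this equality holds by \Cref{mutation-behavior}(3) and strong global foldability, which guarantees each intermediate quiver is strongly $G$-admissible so that the hypothesis of that lemma applies at every stage. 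Then I would compute $\psi$ of the orbit-mutation $\hmu_{K_l}(\underline{u})$ and check it agrees with the ordinary mutation $\mu_{K_l}(\underline{u}^G)$.

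The heart of the argument is therefore a single computation: for the unique mutable vertex (up to the $G$-action) $z\in K_l$, and writing $K=K_l$, I must show
\begin{align*}
\psi\!\left(\prod_{k\in K} u'_k\right) = \left(u^G_{Gz}\right)'\cdot\psi\!\left(\prod_{k\in K}u_k\right),
\end{align*}
or rather, since the orbit-mutation multiplies each cluster variable $u_k$ ($k\in K$) by $u'_k/u_k$ and these all have the same image under $\psi$ (because $u_k$ for $k\in K$ are $G$-translates of $u_z$ when $Q'$ is $G$-admissible, which forces $\psi(u_k)=\psi(u_z)$ — this uses condition \ref{G-inv} and the exchange relation), it suffices to verify $\psi(u'_z/u_z) = (u^G_{Gz})'/u^G_{Gz}$. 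Unwinding the exchange relation for $u'_z$ in the seed $((Q',F'),\underline{u})$ and applying $\psi$ gives a ratio of monomials whose exponents are $\sum_{w}\Arr_{Q'}(z,w)$ and $\sum_w\Arr_{Q'}(w,z)$ collected over each $G$-orbit $Gw$; by the remark following the definition of folding (i.e. $\Arr_{Q'}(z,Gw)=\Arr_{Q'}(Gz,w)=\Arr_{(Q')^G}(Gz,Gw)$) these collected exponents are exactly the exponents appearing in the exchange relation for $(u^G_{Gz})'$ in the seed of $\mathcal{A}^G$ determined by $(Q')^G=\mu_{K_{l-1}}\cdots\mu_{K_1}(Q^G)$.

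I expect the main obstacle to be purely bookkeeping rather than conceptual: one must be careful that the $G$-orbit $K$ really contributes each of its vertices $u_k$ with the same $\psi$-image (this is where one uses that $Q'$ is strongly $G$-admissible, in particular that $G$ acts freely and the exchange monomials are $G$-equivariant), and one must correctly match the index sets, since on the $\mathcal{A}^G$ side a single vertex $Gw$ absorbs the contributions of the whole orbit $Gw$ in $Q'$. A subtle point is that one should also record in the induction that $\psi$ is compatible with the quiver-folding at each step — i.e. carry the statement "$(Q_i')^G$ equals the $i$-th mutation of $Q^G$" alongside the cluster statement — since without it one cannot even name the seed of $\mathcal{A}^G$ that $K_l$ is to be mutated in; but this is supplied directly by \Cref{mutation-behavior}(3) together with the hypothesis of strong global foldability. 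Once these indexing issues are handled, the exchange relations on the two sides match term by term and the induction closes.
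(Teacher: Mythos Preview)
Your proposal is correct and follows essentially the same route as the paper: induct on $l$, reduce to a single orbit-mutation, write out the exchange relation for $u'_z$, apply $\psi$, and regroup the exponents over $G$-orbits using $\Arr_{Q'}(z,Gw)=\Arr_{(Q')^G}(Gz,Gw)$ to match the exchange relation in $\mathcal{A}^G$. The only extra care you take---explicitly carrying the compatibility $(Q')^G=\mu_{K_{l-1}}\cdots\mu_{K_1}(Q^G)$ in the induction via \Cref{mutation-behavior}(3)---is implicit in the paper's argument and is the right thing to flag.
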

\begin{proof}
	By recursion, it suffices to prove the case of a single orbit-mutation. So let \(K\) be a mutable \(G\)-orbit of \((Q,F)\). Orbit-mutating at \(K\), we get the new cluster \((x_v')_{v\in Q_0}\) such that \(x_v' = x_v\) if \(v\notin K\), and 
	\begin{align}\label{orbit-mutated-cluster}
		x_v' = \frac{\prod_{w\in Q_0}x_w^{\Arr_Q(v,w)} + \prod_{w\in Q_0}x_w^{\Arr_Q(w,v)}}{x_v}
	\end{align} 
	if \(v\in K\). Similarly, the cluster \((x_{Gv}')_{Gv \in Q_0^G}\) obtained by mutating \((Q^G,F^G)\) at \(K\) is defined by \(x_{Gv}' = x_{Gv}\) for \(Gv \neq K\) and
	\begin{align}\label{folded-mutated-cluster}
		x_K' = \frac{\prod_{Gw\in Q_0^G}x_{Gw}^{\Arr_{Q^G}(K,Gw)} + \prod_{Gw\in Q_0^G}x_{Gw}^{\Arr_{Q^G}(Gw,K)}}{x_K}.
	\end{align}
	Applying \(\psi\) to \eqref{orbit-mutated-cluster}, we obtain
	\begin{align*}
		\psi(x_v') & = \frac{\prod_{w\in Q_0}x_{Gw}^{\Arr_Q(v,w)} + \prod_{w\in Q_0}x_{Gw}^{\Arr_Q(w,v)}}{x_K} \\ 
		& = \frac{\prod_{Gw\in Q_0^G}x_{Gw}^{\sum_{g \in G}\Arr_Q(v,gw)} + \prod_{Gw\in Q_0^G}x_{Gw}^{\sum_{g \in G}\Arr_Q(gw,v)}}{x_K} \\
		& = \frac{\prod_{Gw\in Q_0^G}x_{Gw}^{\Arr_Q(v,Gw)} + \prod_{Gw\in Q_0^G}x_{Gw}^{\Arr_Q(Gw,v)}}{x_K} \\ 
		& = \frac{\prod_{Gw\in Q_0^G}x_{Gw}^{\Arr_{Q^G}(K,Gw)} + \prod_{Gw\in Q_0^G}x_{Gw}^{\Arr_{Q^G}(Gw,K)}}{x_K}. 
	\end{align*}
	Comparing this with \eqref{folded-mutated-cluster}, the result follows.
\end{proof}

We have the following immediate corollary.

\begin{corollary}
	In the notation above, every cluster variable in \(\mathcal{A}^G\) is the folded cluster variable of some cluster variable in \(\mathcal{A}\). \qedhere
\end{corollary}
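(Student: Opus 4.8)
The plan is to deduce this immediately from \Cref{proposition-cluster-folding}, which contains all the substance; the corollary is a bookkeeping statement about what that proposition says. Let \(y\) be a cluster variable of \(\mathcal{A}^G\). By definition of a cluster algebra, \(y\) lies in some cluster of \(\mathcal{A}^G\), and that cluster is obtained from the initial cluster \((x_{Gv})_{Gv\in Q_0^G}\) by a finite sequence of mutations at mutable vertices of \(Q^G\). Since the vertices of \(Q^G\) are exactly the \(G\)-orbits of \(Q_0\), and (as noted after \Cref{G-admissible}) each orbit is either entirely mutable or entirely frozen, this sequence of vertices is a sequence \(K_1,\dots,K_l\) of mutable \(G\)-orbits of \((Q,F)\). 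Strong global foldability of \((Q,F)\) guarantees that the orbit-mutations \(\hmu_{K_1},\dots,\hmu_{K_l}\) may be performed and that all intermediate quivers remain strongly \(G\)-admissible, so the orbit-cluster of \(\mathcal{A}\) obtained by orbit-mutating at \(K_1,\dots,K_l\) is well defined.

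Next I would apply \Cref{proposition-cluster-folding} to this sequence \(K_1,\dots,K_l\): it states precisely that \(\psi\) carries that orbit-cluster onto the cluster of \(\mathcal{A}^G\) reached by mutating at \(K_1,\dots,K_l\) — i.e.\ onto the cluster containing \(y\). Hence there is a cluster variable \(x\) of the orbit-cluster with \(y=\psi(x)\). Finally, since every element of an orbit-cluster is a cluster variable of \(\mathcal{A}\) in the usual sense (an orbit-mutation is a composition of ordinary vertex mutations, so an iterated sequence of orbit-mutations is in particular a finite sequence of ordinary mutations applied to genuine seeds), \(x\) is a cluster variable of \(\mathcal{A}\), and \(y=\psi(x)\) is by definition its folded cluster variable. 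This completes the argument.

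I do not expect a genuine obstacle here: the only points requiring a word of care are (i) that every cluster of \(\mathcal{A}^G\) can be reached by mutating only at \emph{mutable} orbits of \(Q\) — immediate since frozen orbits are never mutated — and (ii) that the notion ``the orbit-cluster obtained by orbit-mutating at \(K_1,\dots,K_l\)'' is unambiguous, which is exactly the well-definedness of orbit-mutation already established via \Cref{mutation-behavior} and strong global foldability. Everything else is a direct reading of \Cref{proposition-cluster-folding}.
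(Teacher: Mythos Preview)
Your proposal is correct and is exactly the intended unpacking of why the corollary is ``immediate'' from \Cref{proposition-cluster-folding}: the paper gives no proof beyond the \qedhere, and what you wrote is the natural one-paragraph justification.
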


\section{Folding the \(A_\infty\) quiver}\label{section_folding_Ainf}

We let \(Q\) be a quiver (without loops or 2-cycles) whose underlying graph is a cycle with \(n\) vertices, \(n\geq 2\), indexed by \(\mathbb{Z}/n\mathbb{Z}\). We let \(A_Q\) be the quiver whose underlying diagram is \(A_\infty\), so its vertices are indexed by \(\mathbb{Z}\). There is an arrow from \(i\) to \(i\pm1\) in \(A_Q\) if and only if there is an arrow from \([i]\) to \([i\pm1]\) in \(Q\), see \Cref{AQ-to-Ainfty}. Let \(\Sigma \in \Sym((A_Q)_0)\) be given by \(\Sigma(i) = i+n\). Let \(G\) be the subgroup of \(\Sym((A_Q)_0)\) generated by \(\Sigma\). 

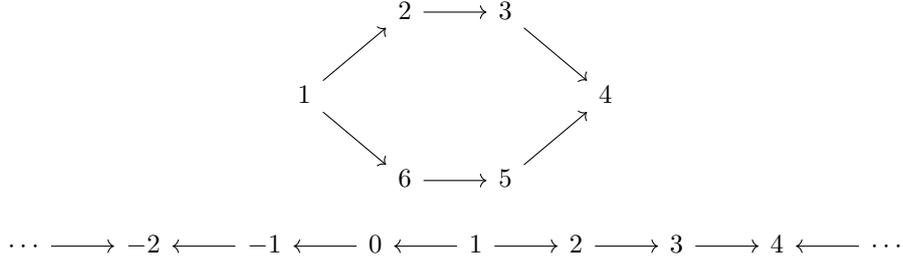
\begin{figure}[h]
	\begin{center}
		\[\begin{tikzcd}
			& 2 & 3 \\
			1 &&& 4 \\
			& 6 & 5
			\arrow[from=1-2, to=1-3]
			\arrow[from=1-3, to=2-4]
			\arrow[from=2-1, to=1-2]
			\arrow[from=2-1, to=3-2]
			\arrow[from=3-2, to=3-3]
			\arrow[from=3-3, to=2-4]
		\end{tikzcd}\]
		\[\begin{tikzcd}
			\cdots & {-2} & {-1} & 0 & 1 & 2 & 3 & 4 &  \cdots
			\arrow[from=1-1, to=1-2]
			\arrow[from=1-3, to=1-2]
			\arrow[from=1-4, to=1-3]
			\arrow[from=1-5, to=1-4]
			\arrow[from=1-5, to=1-6]
			\arrow[from=1-6, to=1-7]
			\arrow[from=1-7, to=1-8]
			\arrow[from=1-9, to=1-8]
		\end{tikzcd}\]
	\end{center}
	\caption{A cyclic quiver with 6 vertices and the associated orientation of \(A_\infty\)}\label{AQ-to-Ainfty}
\end{figure}

\begin{proposition}
	The quiver \(A_Q\) is strongly \(G\)-admissible, and \(Q \simeq A_Q^G\). 
\end{proposition}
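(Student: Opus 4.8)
The plan is to verify directly the four conditions in \Cref{G-admissible} and then to exhibit the isomorphism \(A_Q^G\simeq Q\). I first record the shape of \(A_Q\) and of the \(G\)-action. Since the underlying diagram of \(A_Q\) is \(A_\infty\), we have \(\Arr_{A_Q}(i,j)\in\{0,1\}\), with \(\Arr_{A_Q}(i,i+1)+\Arr_{A_Q}(i+1,i)=1\) for every \(i\) and \(\Arr_{A_Q}(i,j)=0\) whenever \(|i-j|\neq 1\); moreover the single arrow between \(i\) and \(i+1\) points from \(i\) to \(i+1\) exactly when \(\Arr_Q([i],[i+1])>0\), so its multiplicity depends only on the residues of \(i\) and \(i+1\) modulo \(n\). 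In particular \(A_Q\) has no loops and no \(2\)-cycles and is locally finite (each vertex has two neighbours), so it is a quiver in the sense of this paper. The group \(G=\langle\Sigma\rangle\) acts on \((A_Q)_0=\mathbb{Z}\) by \(\Sigma^k(i)=i+kn\), so the orbit of \(i\) is \(Gi=i+n\mathbb{Z}\), and the two neighbours \(v\pm 1\) of a vertex \(v\) lie in the orbit of a vertex \(w\) precisely when \([v\pm 1]=[w]\) in \(\mathbb{Z}/n\mathbb{Z}\).

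Now for the four conditions. Condition \ref{G-inv} is immediate, since shifting by a multiple of \(n\) preserves residues modulo \(n\), and the arrow multiplicities of \(A_Q\) depend only on those residues. For condition \ref{stab}, \(\Sigma^k=\mathrm{id}\) forces \(kn=0\), hence \(k=0\) because \(n\geq 2\). For condition \ref{virt-loop}, two distinct vertices of one \(G\)-orbit differ by a nonzero multiple of \(n\), hence by at least \(n\geq 2\), so they are never adjacent in \(A_\infty\) and \(\Arr_{A_Q}(Gv,Gv)=0\). For condition \ref{virt-cyc}, both \(\Arr_{A_Q}(Gv_1,v_2)\) and \(\Arr_{A_Q}(v_2,Gv_1)\) are supported on whichever of \(v_2-1,v_2+1\) lie in \(Gv_1\); if at most one of them does, the product collapses to \(\Arr_{A_Q}(v_2\pm 1,v_2)\cdot\Arr_{A_Q}(v_2,v_2\pm 1)=0\) because \(A_Q\) has no \(2\)-cycles, and the only way both can lie in \(Gv_1\) is \(n\mid 2\), i.e.\ \(n=2\), in which case \([v_2-1]=[v_2+1]=[v_1]\) and \(\Arr_{A_Q}(Gv_1,v_2)=2\,\Arr_{A_Q}(v_2+1,v_2)\), \(\Arr_{A_Q}(v_2,Gv_1)=2\,\Arr_{A_Q}(v_2,v_2+1)\), whose product again vanishes because \(A_Q\) has no \(2\)-cycles. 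This establishes strong \(G\)-admissibility of \(A_Q\).

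For \(A_Q^G\simeq Q\) I take the bijection \((A_Q)_0/G\to\mathbb{Z}/n\mathbb{Z}=Q_0\) sending \(Gi\) to \([i]\), and check that \(\Arr_{A_Q^G}(Gv_1,Gv_2)=\Arr_{A_Q}(Gv_1,v_2)\) equals \(\Arr_Q([v_1],[v_2])\) by the same neighbour analysis. If \([v_1]=[v_2]\), neither \(v_2-1\) nor \(v_2+1\) lies in \(Gv_1\), so both sides vanish (no loops in \(Q\)). If \([v_1]\neq[v_2]\), then for \(n\geq 3\) at most one of \(v_2\pm 1\) lies in \(Gv_1\) and \(\Arr_{A_Q}(Gv_1,v_2)\) reproduces \(\Arr_Q([v_1],[v_2])\in\{0,1\}\) (the cyclic quiver \(Q\) being simple when \(n\geq 3\)), while for \(n=2\) both \(v_2-1\) and \(v_2+1\) lie in \(Gv_1\) and the two \(\{0,1\}\)-valued arrows of \(A_Q\) they carry add up exactly to \(\Arr_Q([v_1],[v_2])\in\{0,2\}\). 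Hence the arrow multiplicities agree and \(Gi\mapsto[i]\) is an isomorphism of quivers.

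The step I expect to need the most care is the degenerate case \(n=2\): there the underlying graph of \(Q\) is a doubled edge, the two \(G\)-orbits are the even and the odd integers, and every vertex of \(A_Q\) has both of its neighbours in the opposite orbit, so one must simultaneously rule out a virtual \(2\)-cycle (both neighbours lying in one orbit) and verify that folding two \(\{0,1\}\)-valued arrows of \(A_Q\) reconstructs the multiplicity-two arrow of \(Q\); both points reduce to \(A_Q\) having no \(2\)-cycles together with \(Q\) having none. For \(n\geq 3\) the verification is completely routine, because \(Q\) is then a simple quiver and only one neighbour of any vertex lies in a given \(G\)-orbit.
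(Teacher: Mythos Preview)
Your proof is correct and takes the same approach as the paper, namely a direct verification of the four conditions in \Cref{G-admissible} and of the isomorphism from the definition of \(A_Q\); the paper simply declares all of this ``trivial'' in two sentences, whereas you spell out each step, including the degenerate \(n=2\) case. Your extra care there is justified and the argument is sound, but it does not constitute a different route.
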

\begin{proof}
	The quiver \(A_Q\) and the group \(G\) trivially satisfy the assumptions of \Cref{G-admissible}. The isomorphism \(Q \simeq A_Q^G\) follows from the definition of \(A_Q\) above.
\end{proof}

We wish to understand for what orientations of \(Q\) the quiver \(A_Q\) is strongly globally foldable with respect to \(G\). A partial result will follow from the following general proposition.

\begin{proposition}\label{tree-proposition}
	Let \(A\) be a connected quiver and \(G \subseteq \Sym(A_0)\) a group of permutations of the vertices of \(A\) such that \(A\) is strongly \(G\)-admissible. If \(A^G\) is a finite tree then \(G = \{\mathrm{Id}_A\}\) and \(A = A^G\).
\end{proposition}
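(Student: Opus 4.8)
The plan is to argue that a connected quiver $A$ with a free action of an infinite group $G$ (or even a nontrivial finite group) cannot fold onto a finite tree, essentially for two complementary reasons: a counting/cardinality reason when $G$ is infinite, and a cycle-creation reason when $G$ is finite and nontrivial. Since $A^G$ is a finite tree, $A_0/G$ is finite. If $G$ is infinite, then $A_0 = \bigsqcup_{Gv}Gv$ is a finite union of infinite orbits, hence $A_0$ is infinite; but a finite tree has finitely many vertices, contradiction via the bijection $A_0/G \simeq (A^G)_0$. So $G$ must be finite. If $G$ is finite and nontrivial, I want to derive a contradiction with the tree (in particular, acyclicity) of $A^G$, or with strong $G$-admissibility.

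First I would reduce to the finite case as above, so assume $1 < |G| < \infty$ and $A$ is a finite connected quiver. The key observation is that $A \to A^G$ is a covering-type map: the underlying graph $\overline{A}$ of $A$ is connected, $G$ acts freely on its vertices, and (using conditions \ref{G-inv}, \ref{stab}, together with $\Ed_A(Gv,Gv)=0$ from \ref{virt-loop}) the quotient graph map $\overline{A}\to \overline{A^G}$ is a genuine graph covering of degree $|G|$. Now a covering of a tree by a connected graph must be an isomorphism, because a tree is simply connected; equivalently, one can compare Euler characteristics. Here is the explicit edge-count I would run: writing $|A_0| = |G|\cdot|(A^G)_0|$ and, because $G$ acts freely and there are no edges within a $G$-orbit, $\Ed_A(A_0,A_0) = |G|\cdot \Ed_{A^G}((A^G)_0,(A^G)_0)$ (each edge of $A^G$ lifts to exactly $|G|$ edges of $A$, by the displayed computation $\Arr_A(Gv_1,v_2)=\Arr_A(v_1,Gv_2)$ in the remark after the folding definition, together with freeness). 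Since $A^G$ is a finite tree, $\Ed_{A^G}((A^G)_0,(A^G)_0) = |(A^G)_0| - 1$, so $\Ed_A(A_0,A_0) = |G|(|(A^G)_0| - 1) = |A_0| - |G| < |A_0| - 1$ whenever $|G|>1$. But $A$ is connected, so $\Ed_A(A_0,A_0) \geq |A_0| - 1$. This forces $|G| = 1$, and then $A = A^G$ as claimed.

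I expect the main obstacle to be justifying cleanly that each edge of $A^G$ has exactly $|G|$ preimages in $A$ — i.e., that the folding map is an honest $|G|$-fold covering on underlying graphs and not something that could collapse or multiply edges. This needs: freeness of the $G$-action (condition \ref{stab}) to see that the $|G|$ translates $gv_1$ are distinct vertices; $G$-invariance (condition \ref{G-inv}) to see the edge multiplicities are constant along orbits; and the absence of intra-orbit edges (condition \ref{virt-loop}) so that an edge of $A^G$ between two \emph{distinct} orbits $Gv_1 \neq Gv_2$ (distinct because a tree has no loops) pulls back to $\sum_{g}\Ed_A(gv_1, v_2) = \Ed_A(Gv_1,v_2) = \Ed_{A^G}(Gv_1,Gv_2)$ edges, distributed as exactly one "sheet" per element of $G$ by freeness. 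Once that bookkeeping is pinned down, the inequality $\Ed_A(A_0,A_0) = |G|(|(A^G)_0|-1)$ versus connectivity $\Ed_A(A_0,A_0)\geq |A_0|-1 = |G|\,|(A^G)_0| - 1$ is immediate and finishes the proof. (One could alternatively phrase the last step as: a connected covering space of a contractible space is a homeomorphism, but the edge count is more self-contained and avoids invoking topological machinery for infinite-looking but actually finite graphs.)
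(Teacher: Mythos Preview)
Your reduction to the finite-$G$ case is flawed. You argue that if $G$ is infinite then $A_0$ is infinite, and then claim a contradiction with $A^G$ being a finite tree ``via the bijection $A_0/G \simeq (A^G)_0$''. But that bijection only tells you the \emph{quotient} $A_0/G$ is finite; it says nothing about $|A_0|$. There is no contradiction here. And since the inequality $\Ed_A(A_0,A_0)\geq |A_0|-1$ is vacuous when $|A_0|=\infty$, your edge-count argument does not cover this case either. The infinite-$G$ case is therefore genuinely unhandled.

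The covering-space remark you tuck away at the end is exactly what would close this gap: the underlying-graph map $\overline{A}\to\overline{A^G}$ is a connected covering of a tree, trees are simply connected, hence the covering is trivial and $|G|=1$. This works uniformly for finite and infinite $G$, and makes the separate cardinality step unnecessary. Your Euler-characteristic count in the finite case is correct and is really just the combinatorial shadow of this fact.

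For comparison, the paper's proof is entirely elementary and handles both cases at once with no split on $|G|$: take two distinct vertices $v_1,v_2$ in a common orbit (possible as soon as $|G|>1$), connect them by a finite path in $A$ using connectedness, and look at the first time this path revisits the orbit $Gv_1$. Projecting to $A^G$ produces either a multiple edge (if the return occurs after two steps) or a closed walk through distinct orbits (if after more), contradicting the tree hypothesis. Your approach is more conceptual once repaired, but the paper's path-projection argument has the virtue of needing no topology and no finiteness of $G$.
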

\begin{proof}
	Let \(\{O_1,...,O_n\}\) be the \(G\)-orbits in \(A\). If \(G\) is not trivial, then \(\lvert O_1 \rvert = \lvert G \rvert > 1\). Let \(v_1,v_2 \in O_1\) be two distinct vertices of \(A\). \par
	Since \(A\) is connected, we have a (not necessarily directed) path connecting \(v_1\) and \(v_2\). Moreover, we can assume this path does not go through a given vertex more than once. In particular, this means that there is a sequence of distinct vertices \((a_1,...,a_k)\) of \(A\) with \(a_1 = v_1\), \(a_k = v_2\) such that
	\begin{align*}
		\Ed_A(a_1,a_2)\cdot...\cdot\Ed_A(a_{k-1},a_k) > 0. 
	\end{align*} 
	But now we can find \(1 < j \leq k\) such that \(Ga_1 = Ga_j\) and \(Ga_1 \neq Ga_s\) for any \(1 < s < j\). Notice that \(j>2\), otherwise we would have \(\Ed_A(a_1,a_2) > 0\) for \(a_1,a_2\) in the same \(G\)-orbit, which contradicts the strong \(G\)-admissibility of \(A_Q\). \par
	If \(j = 3\), then \(\Ed_A(a_2,O_1) \geq \Ed_A(a_2,a_1) + \Ed_A(a_2,a_3)\geq 2\). So \(\Ed_{A^G}(G{a_{2}},O_1) \geq 2\), which contradicts \(A^G\) being a tree. \par
	If \(j > 3\), we have by construction
	\begin{align*}
		\Ed_A(a_1,a_{2})\cdot...\cdot\Ed_A(a_{j-1},a_j) > 0. 
	\end{align*}
	As \(\Ed_A(a_i,a_{i+1}) \leq \Ed_{A^G}(Ga_i,Ga_{i+1})\), we obtain
	\begin{align*}
		\Ed_{A^G}(O_1,Ga_{2})\cdot...\cdot\Ed_{A^G}(Ga_{j-1},O_1) > 0,
	\end{align*}
	which is again a contradiction to \(A^G\) being a tree.
\end{proof}

\begin{corollary}
	If \(Q\) is cyclically oriented, then \(A_Q\) is not strongly globally foldable with respect to \(G\). 
\end{corollary}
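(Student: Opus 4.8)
\section*{Proof proposal}

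The plan is to argue by contradiction. Since $Q$ is cyclically oriented we have $n \ge 3$, and by construction $A_Q$ is the orientation of $A_\infty$ with all arrows pointing the same way, say $i \to i+1$ for every $i \in \mathbb{Z}$; recall also $A_Q^G = Q$ under the identification of $(A_Q)_0/G$ with $\mathbb{Z}/n\mathbb{Z}$. Suppose $A_Q$ were strongly globally foldable with respect to $G$. I would then consider the orbit-mutation sequence $\hmu_{G0}, \hmu_{G1}, \dots, \hmu_{G(n-3)}$ and set $P_i = (\hmu_{G(i-1)} \circ \cdots \circ \hmu_{G0})(A_Q)$ for $0 \le i \le n-2$ (so $P_0 = A_Q$). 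By the foldability hypothesis every $P_i$ is strongly $G$-admissible, so each orbit-mutation above is well defined and, by \Cref{mutation-behavior}, applying $\hmu_{G(i-1)}$ to the strongly $G$-admissible quiver $P_{i-1}$ yields $(P_i)^G = \mu_{[i-1]}\bigl((P_{i-1})^G\bigr)$. Iterating and using $A_Q^G = Q$, we obtain $(P_{n-2})^G = \mu_{[n-3]} \circ \cdots \circ \mu_{[0]}(Q)$.

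The combinatorial core is to show that this iterated mutation unravels the oriented cycle into a finite tree. I would prove by induction on $j$, for $1 \le j \le n-3$, that $Q_j := \mu_{[j-1]} \circ \cdots \circ \mu_{[0]}(Q)$ consists of a ``tail'' $[0] \to [1] \to \cdots \to [j-1]$ attached at $[j-1]$ to a $3$-cycle $[j] \to [j-1] \to [n-1] \to [j]$, which in turn shares the arrow $[n-1] \to [j]$ with a cyclically oriented $(n-j)$-cycle $[j] \to [j+1] \to \cdots \to [n-1] \to [j]$. The inductive step reduces to checking that $\mu_{[j-1]}$ reverses the three arrows incident to $[j-1]$, creates one new arrow $[n-1] \to [j]$ coming from the oriented $2$-path $[n-1] \to [j-1] \to [j]$, and that the other new arrow $[n-1] \to [j-2]$ is cancelled by the existing arrow $[j-2] \to [n-1]$. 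A final mutation at $[n-3]$ collapses the remaining $3$-cycle: the new arrows it produces are all cancelled by existing ones, so $\mu_{[n-3]} \circ \cdots \circ \mu_{[0]}(Q)$ is acyclic --- in fact an orientation of the Dynkin diagram $D_n$ (with $D_3 = A_3$) --- and hence a tree on $n$ vertices. Call it $T$.

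It then remains to combine the two parts. The quiver $A_Q$ is connected (its underlying graph is $A_\infty$), and mutation preserves connectedness --- an edge that disappears under mutation at $z$ has both of its endpoints still joined to $z$ afterwards --- hence so does orbit-mutation; therefore $P_{n-2}$ is a connected, strongly $G$-admissible quiver whose folding $(P_{n-2})^G = T$ is a finite tree. By \Cref{tree-proposition} this forces $G = \{\mathrm{Id}\}$, contradicting the fact that $G$ is generated by the nontrivial permutation $\Sigma \colon i \mapsto i+n$. Hence $A_Q$ is not strongly globally foldable with respect to $G$. The step I expect to require the most care is the inductive computation in the second paragraph: verifying that the prescribed mutation sequence keeps the quiver in the stated normal form at each stage, and in particular that at every mutation exactly one of the two oriented $2$-paths through the mutated vertex produces a cancelling $2$-cycle while the other new arrow survives.
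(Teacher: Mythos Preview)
Your proof is correct and follows essentially the same strategy as the paper: reduce the cyclically oriented $n$-cycle to a tree by a sequence of mutations, then invoke \Cref{tree-proposition} to derive a contradiction. The only difference is that the paper dispatches the combinatorial step in one line by citing the well-known fact that an oriented $n$-cycle is mutation-equivalent to a quiver of type $D_n$, whereas you construct the explicit mutation sequence $\mu_{[0]},\dots,\mu_{[n-3]}$ and verify the resulting shape by hand; you are also more careful than the paper in arguing that connectedness survives orbit-mutation, which is needed to apply \Cref{tree-proposition}.
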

\begin{proof}
	It is well known (see for example \cite{FWZ21}) that \(Q\) is of type \(D_n\), that is, \(Q\) is mutation-equivalent to a quiver \(\tilde{Q}\) whose underlying graph is \(D_n\). \par
	Assume that \(A_Q\) is strongly globally foldable with respect to \(G\), and folds onto \(Q\). Then a finite sequence of orbit-mutations applied to \(A_Q\) yields a quiver \(\tilde{A}\) such that \(\tilde{A}^G = \tilde{Q}\). By the above proposition, \(\tilde{A}\) is finite, which is a contradiction.
\end{proof}

We can now give a precise statement for when \(A_Q\) is strongly globally foldable with respect to \(G\).  

\begin{theorem}\label{folding-theorem}
	The quiver \(A_Q\) is strongly globally foldable with respect to \(G\) if and only if \(Q\) is not cyclically oriented.  
\end{theorem}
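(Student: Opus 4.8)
The ``only if'' direction is the preceding corollary, read contrapositively, so assume $Q$ is not cyclically oriented. Since the underlying graph of $Q$ is a cycle, $Q$ is then acyclic and has at least one source and at least one sink. The plan is to prove strong global foldability by induction on the length of an orbit-mutation sequence applied to $A_Q$. By the first item of Lemma~\ref{mutation-behavior}, an orbit-mutation of a strongly $G$-admissible quiver always again satisfies conditions \ref{G-inv}--\ref{virt-loop} of Definition~\ref{G-admissible}; so by Remark~\ref{remark-vitrual-2-cycles} it suffices to show that no such sequence ever creates a \emph{virtual $2$-cycle} --- a vertex $w$ and two distinct vertices $v,v'$ in a common $G$-orbit with $\Arr(v,w)>0$ and $\Arr(w,v')>0$ (condition \ref{virt-cyc} of Definition~\ref{G-admissible}).

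To control the quivers that arise I would use a surface model. A non-cyclically oriented cycle on $n$ vertices is mutation-equivalent to a quiver of affine type $\tilde A_{a,b}$ with $a+b=n$ and $a,b\ge 1$, and such a quiver is the adjacency quiver of a triangulation of an annulus with $a$ and $b$ marked points on its two boundary circles. Passing to the universal cover turns the annulus into an infinite strip, on which $G$ acts by deck transformations, and I would check that (for a suitable choice of initial triangulation) $A_Q$ together with its $G$-action is exactly the lift of this picture: the vertices of $A_Q$ are the arcs of the lifted triangulation, $\Sigma$ is the generating deck transformation, and orbit-mutation of $A_Q$ at a $G$-orbit of vertices is the lift of the flip of the single corresponding arc of the annulus. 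Since flips of a triangulated surface never produce loops or $2$-cycles, and since the deck group acts freely on arcs (no compact arc is isotopic to a non-trivial translate of itself), every quiver in the orbit-mutation class of $A_Q$ is the adjacency quiver of an equivariant strip triangulation and is therefore strongly $G$-admissible; no virtual $2$-cycle can occur, which is what we needed.

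The main obstacle, I expect, is precisely making this dictionary between orientations of $A_\infty$ and triangulations rigorous: exhibiting the triangulation of the strip whose adjacency quiver is $A_Q$ (as the lift of a triangulation of the annulus whose quiver is the given oriented $n$-cycle), verifying that orbit-mutation really is the lift of a flip, and --- the delicate point --- checking that every triangulation in the resulting mutation class lifts to a \emph{strongly} $G$-admissible quiver, i.e.\ that no triangle ever acquires two sides lying in one $G$-orbit and no such pair of arcs is ever joined by an oriented $2$-path; the small values of $n$, where orbits of vertices are not yet widely spread out, should be handled directly. A purely combinatorial version of the argument would replace the surface model by an explicit induction maintaining a normal form for the quivers in the orbit-mutation class of $A_Q$ (as $G$-periodic ``triangulations of a strip''), which is equivalent but more laborious.
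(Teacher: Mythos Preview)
Your approach is essentially the paper's: model $Q$ as a triangulation of an annulus, lift to a $\Sigma$-periodic triangulation of an infinite strip whose adjacency quiver is $A_Q$, identify orbit-mutation with equivariant flips, and reduce (via Lemma~\ref{mutation-behavior} and Remark~\ref{remark-vitrual-2-cycles}) to checking that no $\Sigma$-stable triangulation of the strip has a quiver with virtual $2$-cycles.

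The gap is exactly where you place it. The sentence ``since flips of a triangulated surface never produce loops or $2$-cycles, and since the deck group acts freely on arcs \ldots\ therefore strongly $G$-admissible; no virtual $2$-cycle can occur'' is a non-sequitur: absence of genuine $2$-cycles in a triangulation quiver, together with freeness of the $G$-action on arcs, gives you conditions~\ref{G-inv}--\ref{virt-loop} but says nothing about condition~\ref{virt-cyc}. A virtual $2$-cycle involves two \emph{distinct} arcs $\beta_1,\beta_2$ in one $\Sigma$-orbit connected to a third arc by arrows in opposite directions; nothing you have written rules this out. You recognise this yourself in the next paragraph, but then leave it unproved.

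The paper carries out precisely this verification. A virtual $2$-cycle forces $\beta_1$ and $\beta_2=\Sigma^a\beta_1$ to share an endpoint and to sit as two sides of a quadrilateral in the triangulation, the other two sides being arcs $\delta_1,\delta_2$ meeting at a fourth marked point $m_3$. One then argues by cases on the position of $m_3$ (on the same boundary as $m_1$ and to its left; on the same boundary and to the right of $\Sigma^{2a}m_1$; on the opposite boundary): in each case a suitable $\Sigma$-translate of $\delta_1$ or $\delta_2$ is forced to cross another arc already in the triangulation, contradicting $\Sigma$-stability. This short case analysis is the actual content of the ``if'' direction, and it is what your proposal is missing. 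No separate treatment of small $n$ is needed.
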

\begin{proof}
	From the corollary above, we only need to show that if \(Q\) is not cyclically oriented, then \(A_Q\) is strongly globally foldable with respect to \(G\). \par
	From \cite{FST08}, if \(Q\) is not cyclically oriented, then it is the quiver of a triangulation \(T\) of an annulus with \(k_1\) marks on one boundary component and \(k_2\) marks on the other boundary component such that \(k_1+k_2 = n\). Here \(k_1\) is the number of arrows in \(Q\) pointing clockwise. \par
	Mutations of \(Q\) correspond to flipping the associated arcs in \(T\). We remark that at least one arc in \(T\) connects one boundary component with the other. We enumerate the marks on the outer boundary by \(v_1,...,v_{k_1}\) and the inner boundary by \(u_1,...,u_{k_2}\), both in a clockwise orientation, and such that there is an arc in \(T\) connecting \(v_1\) and \(u_1\). \par
	Let us lift this triangulation to a periodic triangulation of an infinite ribbon \(\mathcal{S}\). To do so, we consider an infinite rectangle with two infinite line boundary components, with an infinite countable set of marked points \(\mathcal{M}\) on each boundary component. We index the marked points on the upper boundary by \(\{\Sigma^i v_j \mid i \in \mathbb{Z}, 1\leq j \leq k_1\}\) and those on the lower boundary \(\{\Sigma^i u_j \mid i \in \mathbb{Z}, 1\leq j \leq k_2\}\), in such a way that \(\Sigma^i v_j\) is to the left of \(\Sigma^{i'} v_{j'}\) if and only if \(i<i'\) or \(i=i'\) and \(j<j'\). We define a shift \(\Sigma : \mathcal{M} \rightarrow \mathcal{M}\) in the obvious way, that is, for \(m \in \mathcal{M}\) we have
	\begin{align*}
		\Sigma(m) = 
		\begin{cases}
			\Sigma^{i+1}v_j & \text{if } m = \Sigma^{i}v_j \\
			\Sigma^{i+1}u_j & \text{if } m = \Sigma^{i}u_j
		\end{cases}.
	\end{align*}
	Arcs in \((\mathcal{S},\mathcal{M})\) are in bijection with non-neighboring pairs of distinct points in \(\mathcal{M}\). We can thus extend the shift \(\Sigma\) to arcs as well; if \(\beta\) is the arc with end points \(m_1,m_2 \in \mathcal{M}\) then \(\Sigma(\beta)\) is the arc with end points \(\Sigma(m_1),\Sigma(m_2)\). \par
	We add an arc \(\gamma_i\) connecting \(\Sigma^i v_1\) with \(\Sigma^i u_1\) for every \(i \in \mathbb{Z}\). We let \(R_i\) be the rectangle bounded by \(\gamma_i\) and \(\gamma_{i+1}\). We triangulate \(R_i\) in such a way that if we identify \(\Sigma^i v_1\) with \(\Sigma^{i+1}v_1\), \(\Sigma^{i}u_1\) with \(\Sigma^{i+1}u_1\) and \(\gamma_i\) with \(\gamma_{i+1}\) we obtain the annulus with its triangulation \(T\). \par	
	We let \(\tilde{T}\) be the triangulation of \(\mathcal{S}\) which coincides with the triangulation on \(R_i\) for each \(i \in \mathbb{Z}\). We see that \(\tilde{T}\) is stable under the shift \(\Sigma\) of arcs. It is not difficult to see that the quiver of this triangulation is exactly \(A_Q\), and that the automorphism \(\Sigma\) of \((A_Q)_0\) corresponds to the shift \(\Sigma\) of arcs. \par	
	\begin{figure}[ht]
		\begin{center}
			\def\svgwidth{0.92\textwidth}
			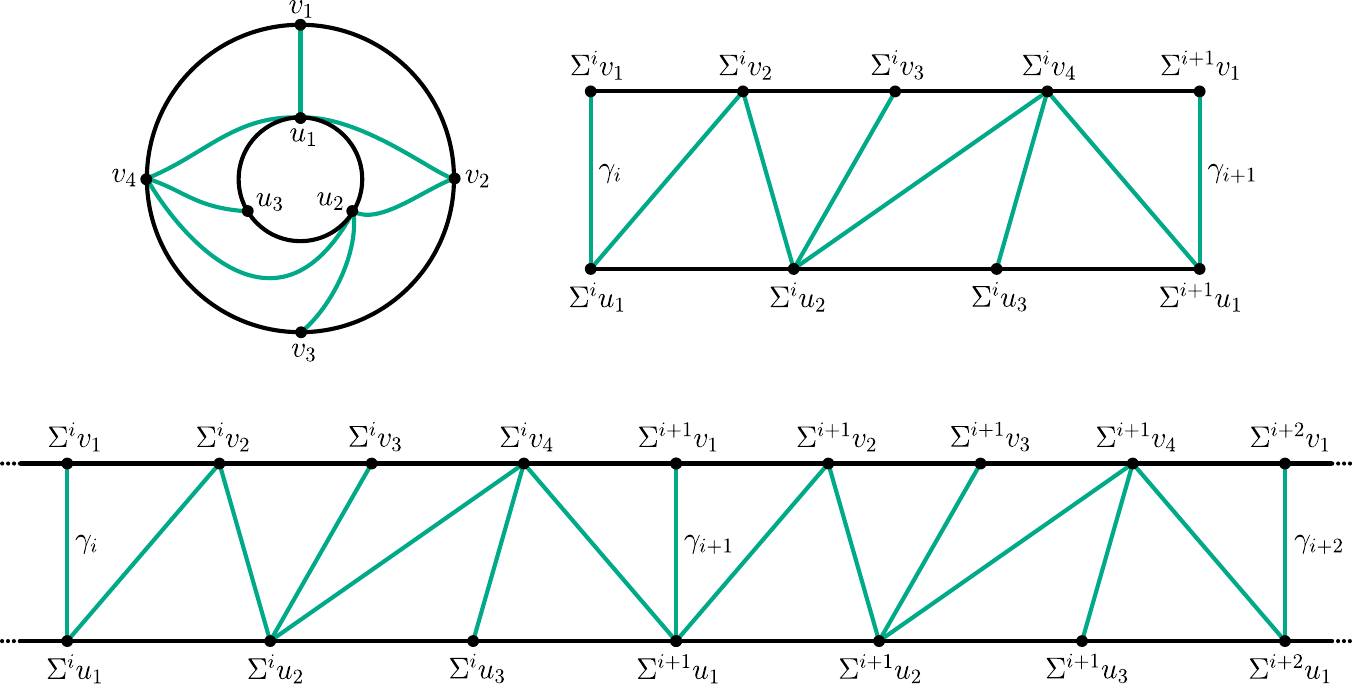
		\end{center}
		\caption{Constructing a triangulation of the infinite ribbon from a triangulation of the annulus}
	\end{figure}
	As \(A_Q\) is strongly \(G\)-admissible, we can mutate at \(\Sigma\)-orbits. This is equivalent to being able to flip simultaneously \(\Sigma\)-orbits of arcs in \(\tilde{T}\). It is clear that the new triangulation we obtain remains \(\Sigma\)-stable. In order to prove that \(A_Q\) is strongly globally foldable, following \Cref{remark-vitrual-2-cycles}, it is enough to prove that given a \(\Sigma\)-stable triangulation of \((\mathcal{M},\mathcal{S})\), its quiver will not contain virtual \(2\)-cycles. \par 
	A virtual \(2\)-cycle in a \(\Sigma\)-stable triangulation corresponds to a pair of arcs \(\beta_1\) and \(\beta_2\) belonging to a common \(\Sigma\)-orbit, sharing an end point, bounding a rectangle. Denote the end points of \(\beta_1\) by \(m_1\) and \(m_2\), with \(m_2\) being an end point of \(\beta_2\) as well. Without loss of generality \(\beta_2 = \Sigma^{a} \beta_1\) for \(a \geq 1\), and \(m_1 = v_i\). In particular, the end points of \(\beta_1\) are \(m_1 = v_i\) and \(m_2 = \Sigma^{a} m_1 = \Sigma^{a} v_i\), and the end points of \(\beta_2 = \Sigma^{a} \beta_1\) are \(\Sigma^{a} m_1 = \Sigma^{a} v_i\) and \(\Sigma^{a} m_2 = \Sigma^{2a} v_i\). \par
	Since \(\beta_1\) and \(\beta_2\) lie on a rectangle, there is a marked point \(m_3\) and arcs \(\delta_1\) and \(\delta_2\) belonging to a common \(\Sigma\)-stable triangulation, connecting \(m_1\) and \(\Sigma^{2a} m_1\) with \(m_3\), respectively. We consider the possible configurations of \(m_3\). 
	
	\begin{enumerate}
		\item[] Case 1: \(m_3 = \Sigma^{j} v_k\) and it lies to the left of \(m_1\). The triangulation is \(\Sigma\)-stable, \(\Sigma^{-a} \delta_2\) is also an arc in the triangulation. But \(\Sigma^{-a} \delta_2\) and \(\delta_2\) intersect. So this configuration is not possible.
		\item[] Case 2: \(m_3 = \Sigma^{j} v_k\) and it lies to the right of \(\Sigma^{a} m_2\). Similarly to the above case, \(\Sigma^a \delta_1\) and \(\delta_1\) intersect, so this is an impossible configuration. 
		\item[] Case 3: \(m_3 = \Sigma^{j} u_k\). In this case \(\Sigma^a \delta_1\) intersects \(\delta_2\), which is impossible. 
	\end{enumerate}
	See \Cref{three_cases} for a visual description of these cases. Overall, we have shown that it is impossible to have a \(\Sigma\)-stable triangulation whose quiver contains a virtual \(2\)-cycle. Therefore \((A_Q,\Sigma)\) is strongly globally foldable.  
\end{proof}

\begin{figure}[ht]
	\begin{center}
		\def\svgwidth{0.92\textwidth}
		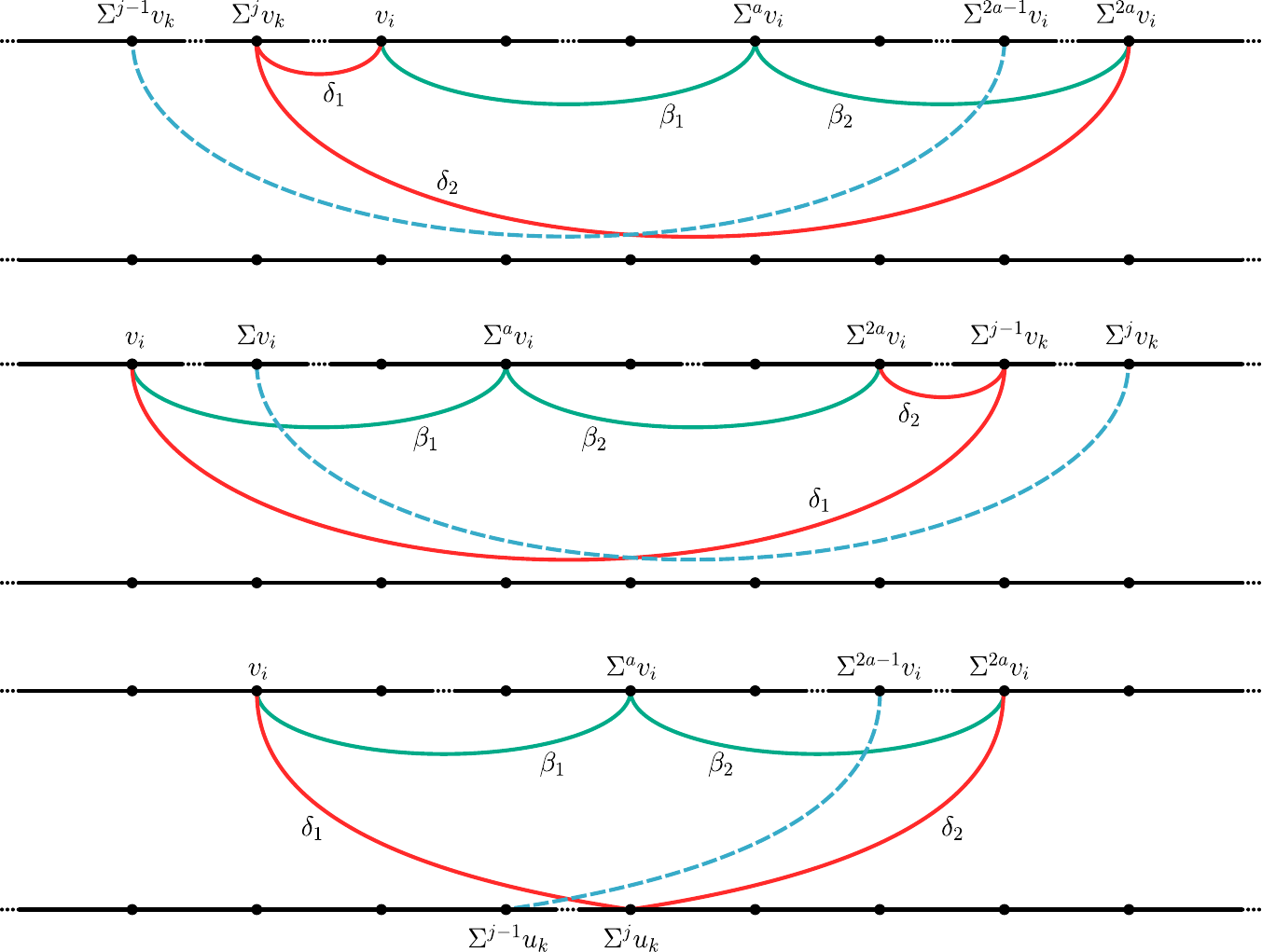
	\end{center}
	\caption{From top to bottom, cases 1-3 in the proof of \Cref{folding-theorem}}\label{three_cases}
\end{figure}

\begin{remark}
	From \Cref{tree-proposition} follows that a result analogous to \Cref{folding-theorem} for affine quivers different from  \(A\)-type cannot exist. That is to say, there is no infinite quiver \(\Gamma\) with a group \(G\) acting on its vertices such that \(\Gamma\) is strongly \(G\)-admissible and \(\Gamma^G\) is of affine-\(D\) or affine-\(E\) type.
\end{remark}

\begin{remark}
	The geometric model for \(A_\infty\) is the one considered in \cite{LiuPaq}, where they study the associated additive cluster category. See also \cite{GraGra}.
\end{remark}
\begin{remark}\label{stable-arc}
	It is not hard to see that any two \(\Sigma\)-stable triangulations of \((\mathcal{S},\mathcal{M})\) are connected by a finite sequence of \(\Sigma\)-orbit flips. Also, if \(\gamma\) is an arc which does not intersect \(\Sigma^{i}\gamma\) for any \(i\in\mathbb{Z}\), then it can be completed to a \(\Sigma\)-stable triangulation.
\end{remark}

\section{The category \(\mathcal{C}_1(\uqslinf)\)}\label{C1infty}

{In this section, we define the subcategory \(\mathcal{C}_1(\uqslinf)\) of the category \(\Oint(\uqslinf)\) and use the results of \cite{HL10} to identify its Grothendieck ring with a cluster algebra.

We remind the notation \(I_\infty = \mathbb{Z}\). We also set \((I_\infty)_0 = 2\mathbb{Z}\) and \((I_\infty)_1 = 2\mathbb{Z}+1\). \par
We define a quiver \(\Gamma_\infty\) as follows. Its vertex set is \(I_\infty \sqcup I_\infty'\) with \(I_\infty' = \mathbb{Z}\), so \((\Gamma_\infty)_0\) consists of two copies of \(\mathbb{Z}\). We denote by \(i\) the vertices in \(I_\infty\) and by \(i'\) the vertices in \(I_\infty'\). The arrows of \(\Gamma_\infty\) are given by
\begin{align*}
	&\Arr_{{\Gamma_\infty}}(i,j) = \begin{cases}
		\delta_{i,i+1}+\delta_{i,i-1} & i\in (I_\infty)_0 \\ 0 & i \in (I_\infty)_1
	\end{cases}, 
	&& \Arr_{{\Gamma_\infty}}(i',j') = 0, \\
	&\Arr_{{\Gamma_\infty}}(i',j) = \begin{cases}
		\delta_{i,j} & i\in (I_\infty)_0 \\ 0 & i \in (I_\infty)_1
	\end{cases},
	&& \Arr_{{\Gamma_\infty}}(i,j') = \begin{cases}
		0 & i\in (I_\infty)_0 \\ \delta_{i,j} & i \in (I_\infty)_1
	\end{cases}.
\end{align*}
We declare the vertices \(i'\) to be frozen. We think of \(\Gamma_\infty\) as a bi-infinite version of the quiver \(\Gamma_n\) introduced in \Cref{Recollection-C1}. \par
\begin{figure}[h]
	\begin{center}
		\[\begin{tikzcd}
			& \boxed{-3'} & \boxed{-2'} & \boxed{-1'} & \boxed{0'} & \boxed{1'} & \boxed{2'} & \boxed{3'} \\
			\cdots & {-3} & {-2} & {-1} & 0 & 1 & 2 & 3 & \cdots
			\arrow[from=1-3, to=2-3]
			\arrow[from=1-5, to=2-5]
			\arrow[from=1-7, to=2-7]
			\arrow[from=2-1, to=2-2]
			\arrow[from=2-2, to=1-2]
			\arrow[from=2-3, to=2-2]
			\arrow[from=2-3, to=2-4]
			\arrow[from=2-4, to=1-4]
			\arrow[from=2-5, to=2-4]
			\arrow[from=2-5, to=2-6]
			\arrow[from=2-6, to=1-6]
			\arrow[from=2-7, to=2-6]
			\arrow[from=2-7, to=2-8]
			\arrow[from=2-8, to=1-8]
			\arrow[from=2-9, to=2-8]
		\end{tikzcd}\]
	\end{center}
	\caption{The quiver \(\Gamma_\infty\). The square vertices are frozen}
\end{figure}
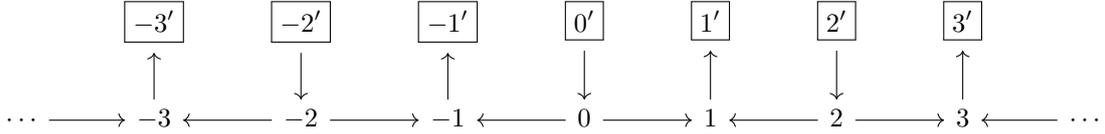 
We let \(\xi : I_\infty \rightarrow \{0,1\}\) be the map defined by \(\xi(i) = \begin{cases}
	0 & i \in (I_\infty)_0 \\
	1 & i \in (I_\infty)_1
\end{cases}\). \par 
We let \(\mathcal{M}_\infty \subseteq \Asub{\infty}\) be the subgroup of monomials generated by \(Y_{i,q^{\xi(i)}}\) and \(Y_{i,q^{\xi(i)+2}}\) for all \(i\in I_\infty\). We define the category \(\mathcal{C}_1(\uqslinf)\) to be the full subcategory of \(\Oint(\uqslinf)\) consisting of modules with finite composition series, such that every composition factor is isomorphic to \(L(m)\) for some \(m \in \mathcal{M}_\infty\). This is an analogue of the category \(\mathcal{C}_1(\uqsl{n+1})\) of \Cref{Recollection-C1}. \par
Following \cite{H05,H07}, the category of integrable (not necessarily finite length) \(\uqslinf\)-modules admits a fusion product.
\begin{proposition}
	Let \(V\) and \(W\) be two modules in \(\Oint(\uqslinf)\), so they have finite composition series. Then their fusion product \(V\ast W\) has a finite composition series. In particular, the fusion product on \(\Oint(\uqslinf)\) equips the Grothendieck group \(\mathcal{R}(\Oint(\uqslinf))\) with a commutative ring structure.
\end{proposition}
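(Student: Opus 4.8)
The strategy is to detect the finite length of $V\ast W$ on its $q$-character, using that the fusion product of \cite{H05,H07} satisfies $\chi_q(V\ast W)=\chi_q(V)\chi_q(W)$ (on $l$-weight spaces the fusion product is the ``convolution'' $\bigoplus_{\gamma'+\gamma''=\gamma}V_{\gamma'}\otimes W_{\gamma''}$, so multiplicativity of $q$-characters is built into the construction). First I would check that $V\ast W$ again lies in $\Oint(\uqslinf)$: it is integrable by construction, and if the weights of $V$ lie in $\bigcup_{j=1}^s(\lambda_j-Q^+)$ and those of $W$ in $\bigcup_{k=1}^t(\mu_k-Q^+)$, then since $\omega(m_1m_2)=\omega(m_1)+\omega(m_2)$ every monomial of $\chi_q(V)\chi_q(W)$ has weight in the finite union $\bigcup_{j,k}(\lambda_j+\mu_k-Q^+)$. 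In particular, by \Cref{htheorem-simples}, the composition factors of $V\ast W$ are modules $L(m)$ for dominant monomials $m\in\Asub{\infty}$.

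Next I would bound the composition factors. As $V$ and $W$ have finite length, $\chi_q(V)$ (resp. $\chi_q(W)$) is a finite $\mathbb{N}$-linear combination of the $\chi_q(L(m_i^V))$ (resp. $\chi_q(L(m_j^W))$), where $m_i^V$ (resp. $m_j^W$) runs over the finitely many highest monomials of its composition factors; by \Cref{corollary-order} every monomial of $\chi_q(V)$ is $\leq m_i^V$ for some $i$, and likewise for $W$. Hence every dominant monomial occurring in $\chi_q(V\ast W)=\chi_q(V)\chi_q(W)$ is $\leq m_i^Vm_j^W$ for some pair $(i,j)$. I would then use that the set of dominant monomials below a fixed dominant monomial of $\Asub{\infty}$ is finite; this is well known for the quantum affine algebras $\uqsl{N}$, and for $\uqslinf$ it follows by truncating to $\hat{\mathcal{U}}_{[-n,n]}\simeq\uqsl{2n+2}$ and applying \Cref{hproposition-char}, because the weight drop $\omega(m_i^Vm_j^W)-\omega(m)\in Q^+$ has finite support, so the statement reduces weight-by-weight to the finite-rank case. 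Thus the set $D$ of candidate highest monomials of composition factors of $V\ast W$ is finite, so $V\ast W$ has at most $|D|$ pairwise non-isomorphic composition factors. Their multiplicities $[V\ast W:L(m)]$ are then determined recursively, top-down along the Nakajima order on the finite poset $D$, by the coefficients of $\chi_q(V)\chi_q(W)$ at monomials in $D$; and each such coefficient is a finite sum of products (coefficient of $m_1$ in $\chi_q(V)$) $\times$ (coefficient of $m_2$ in $\chi_q(W)$), finite because $m_1$ is confined to a finite Nakajima-interval $m(m_j^W)^{-1}\leq m_1\leq m_i^V$ and because $V,W$ have finite-dimensional weight spaces. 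Therefore $V\ast W$ has finite length.

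For the last assertion: by the analogue for $\uqslinf$ of Theorem 5.15 of \cite{H05} (obtained from the finite-rank statement via \Cref{hproposition-char}), $\chi_q$ induces an injective additive map $\mathcal{R}(\Oint(\uqslinf))\to\Ysub{\infty}$; its image contains $1=\chi_q(L(1))$, and by what precedes it is closed under the product of $\Ysub{\infty}$, since $\chi_q(V)\chi_q(W)=\chi_q(V\ast W)$ with $V\ast W$ a finite-length object of $\Oint(\uqslinf)$. Hence the image is a subring of the commutative ring $\Ysub{\infty}$, and transporting this structure along $\chi_q$ makes $\mathcal{R}(\Oint(\uqslinf))$ a commutative ring in which $[V]\cdot[W]=[V\ast W]$ is automatically well defined; in particular $\chi_q$ becomes an injective ring homomorphism.

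The main obstacle is the first input, namely that the fusion product of \cite{H05,H07} --- a priori only defined on all integrable $\uqslinf$-modules --- really does restrict to $\Oint(\uqslinf)$ with $\chi_q(V\ast W)=\chi_q(V)\chi_q(W)$; here one must lean on the details of that construction (the deformation argument and the behaviour of $l$-weight spaces). The secondary, $\uqslinf$-specific point is the finiteness of the set of dominant monomials below a given one: over $\uqsl{N}$ it is immediate from finite-dimensionality of the modules involved, whereas over $\uqslinf$ it has to be extracted from the exhaustion $\uqslinf=\bigcup_n\hat{\mathcal{U}}_{[-n,n]}$ together with \Cref{hproposition-char}.
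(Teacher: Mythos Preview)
Your proposal is correct and follows essentially the same strategy as the paper: reduce to simple $V$ and $W$, use multiplicativity $\chi_q(V\ast W)=\chi_q(V)\chi_q(W)$, and conclude finite length by showing that only finitely many dominant monomials can occur. The paper's argument for this last step is slightly more direct than yours: rather than invoking the Nakajima partial order on monomials and reducing to finite rank via \Cref{hproposition-char}, it observes that any dominant weight $\lambda$ of $V\ast W$ must lie in $(\omega(m_{\gamma_1})+\omega(m_{\gamma_2})-Q^+)\cap P^+$, and this intersection is finite in type $A_\infty$ (since $\omega(m_{\gamma_1})+\omega(m_{\gamma_2})$ has finite support). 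Combined with finite-dimensionality of weight spaces, this immediately bounds the dominant monomials. Your route via the monomial order works too, but the weight-level argument avoids the detour through $\hat{\mathcal{U}}_{[-n,n]}$.
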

\begin{proof}
	Without loss of generality, we can assume \(V\) and \(W\) are simple, we write \(V = L(m_{\gamma_1})\) and \(W = L(m_{\gamma_2})\) for \(\gamma_1,\gamma_2 \in P_l^+\). Then \(\chi_q(V\ast W) = \chi_q(V)\cdot \chi_q(W)\), and it will be enough to show that \(\chi_q(V\ast W)\) contains finitely many dominant monomials. If \(\lambda \in P^+\) is such that \((V\ast W)_\lambda \neq 0\), then \(\lambda \in (\omega(m_{\gamma_1})+\omega(m_{\gamma_2})- Q^+) \cap P^+\). But \(\lvert (\omega(m_{\gamma_1})+\omega(m_{\gamma_2})- Q^+) \cap P^+ \rvert < \infty\), so there are finitely many dominant monomials appearing in \(\chi_q(V\ast W)\).
\end{proof}

\begin{lemma}\label{grothendieck-polynomial}
	The category \(\mathcal{C}_1(\uqslinf)\) is closed under the fusion product, and its Grothendieck ring \(\mathcal{R}(\mathcal{C}_1(\uqslinf))\) is isomorphic to the polynomial ring in countable infinitely many variables
	\begin{align*}
		\mathbb{Z}\left[[L(Y_{i,q^{\xi(i)}})],[L(Y_{i,q^{\xi(i)+2}})] \mid i \in I_\infty \right].
	\end{align*}
\end{lemma}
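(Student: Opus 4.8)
The plan is to bootstrap from the finite-rank statement of \cite{HL10} (recorded in \Cref{HL-grothendieck-polynomial}) by letting $n\to\infty$, using the stabilization of $q$-characters from \Cref{hproposition-char} and \Cref{remark-char-product}. Throughout I identify $\hat{\mathcal{U}}_{[-n,n]}\simeq\uqsl{2n+2}$ and let $n$ run over those values (e.g.\ $n$ odd) for which the induced bipartition of $[-n,n]$ matches the one defining $\mathcal{C}_1(\uqsl{2n+2})$; then any $m\in\mathcal{M}_\infty$ involving only variables $Y_{i,a}$ with $|i|\le n$ corresponds to an element of $\mathcal{M}_{2n+1}$, and the module $L_n(m)$ of \Cref{htheorem-simples} is precisely the simple object $L(m)$ of $\mathcal{C}_1(\uqsl{2n+2})$. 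The proof then has three parts: closure of $\mathcal{C}_1(\uqslinf)$ under the fusion product, generation of its Grothendieck ring by the indicated classes, and the algebraic independence of those classes.

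For closure, it suffices by bilinearity to treat $V=L(m_1)$, $W=L(m_2)$ with $m_1,m_2\in\mathcal{M}_\infty$; by the preceding proposition $V\ast W$ has finite length, so $\chi_q(V\ast W)=\chi_q(V)\chi_q(W)$ is a finite nonnegative combination $\sum_j\chi_q(L(p_j))$. I would then pick $n$ large enough that $m_1,m_2$ involve only $Y_{i,a}$ with $|i|\le n$ and that, by \Cref{remark-char-product}, $\chi_q(V)\chi_q(W)$ agrees in each weight $\le\omega(m_1)+\omega(m_2)$ with $\chi_q(N_n)$, where $N_n:=L_n(m_1)\ast L_n(m_2)\in\mathcal{C}_1(\uqsl{2n+2})$ by \Cref{HL-grothendieck-polynomial}. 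Since $\chi_q(N_n)$ only involves $Y_{i,a}$ with $|i|\le n$, comparing the triangular expansions of $\chi_q(V\ast W)$ and $\chi_q(N_n)$ into $q$-characters of simples (matching the coefficients of $\chi_q(L(p))$ and $\chi_q(L_n(p))$ via \Cref{hproposition-char}) shows that the simple constituents of $V\ast W$ coincide with those of $N_n$; hence each $p_j$ lies in $\mathcal{M}_{2n+1}\subseteq\mathcal{M}_\infty$, so $V\ast W\in\mathcal{C}_1(\uqslinf)$. It follows that $\mathcal{R}(\mathcal{C}_1(\uqslinf))$ is a commutative ring, free as an abelian group on $\{[L(m)]\mid m\in\mathcal{M}_\infty\}$. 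This is the delicate step: one must choose a single $n$ controlling all the finitely many dominant monomials that occur and match composition multiplicities across ranks, invoking the precise monoidal categorification of \cite{HL10}.

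For generation, I would show by induction on the dominant weight $\omega(m)$ — well-founded since $\{\lambda\in P^+\mid\lambda\le\omega(m)\}$ is finite — that each $[L(m)]$, $m\in\mathcal{M}_\infty$, is a polynomial in the classes $[L(Y_{i,q^{\xi(i)}})]$ and $[L(Y_{i,q^{\xi(i)+2}})]$. Write $m=\prod_i Y_{i,q^{\xi(i)}}^{a_i}Y_{i,q^{\xi(i)+2}}^{b_i}$ and let $N$ be the fusion product of the generators with these multiplicities, so $[N]$ is a polynomial in them. Since the partial order $\le$ on $\Asub{\infty}$ is compatible with products, \Cref{corollary-order} shows every monomial of $\chi_q(N)$ is $\le m$, and $m$ occurs there with coefficient $1$; therefore $[N]=[L(m)]+\sum_{m'\in\mathcal{M}_\infty,\,m'<m}c_{m'}[L(m')]$, and $m'<m$ forces $\omega(m')<\omega(m)$. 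By the inductive hypothesis each $[L(m')]$, and hence $[L(m)]$, is a polynomial in the generators; since the $[L(m)]$ span $\mathcal{R}(\mathcal{C}_1(\uqslinf))$, the generators generate the whole ring.

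For independence, apply the injective map $\chi_q$ to a putative relation $P(\dots)=0$ and grade $P$ by declaring the generator at node $i$ to have the weight $\omega(Y_{i,q^{\xi(i)}})=\omega(Y_{i,q^{\xi(i)+2}})$; write $P=\sum_\Lambda P_\Lambda$ accordingly. By \Cref{corollary-order}, each $\chi_q(L(Y_{i,q^{\xi(i)}}))$, resp.\ $\chi_q(L(Y_{i,q^{\xi(i)+2}}))$, has top monomial $Y_{i,q^{\xi(i)}}$, resp.\ $Y_{i,q^{\xi(i)+2}}$, occurring with coefficient $1$, all other monomials having strictly smaller weight; hence for $\Lambda$ maximal with $P_\Lambda\ne 0$, the weight-$\Lambda$ component of $P$ applied to the $\chi_q(L(\cdot))$ equals $P_\Lambda$ evaluated at these monomials. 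As the $Y_{i,q^{\xi(i)}}$ and $Y_{i,q^{\xi(i)+2}}$ are pairwise distinct elements of the free generating set of the free abelian group $\Asub{\infty}$, they are algebraically independent in $\Ysub{\infty}$, so $P_\Lambda=0$ — a contradiction unless $P=0$. The only genuine obstacle is the closure step; once it is established, generation and independence are essentially formal.
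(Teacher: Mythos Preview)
Your proposal is correct. The paper's own proof is a single sentence, ``This is completely analogous to \Cref{HL-grothendieck-polynomial}'', meaning that the argument of \cite{HL10} for finite rank is to be rerun verbatim in the $A_\infty$ setting: one shows directly that any dominant monomial $m'\le m_1m_2$ with $m_1,m_2\in\mathcal{M}_\infty$ lies again in $\mathcal{M}_\infty$ (the only $A_{i,a}^{-1}$ that can occur have $a=q^{\xi(i)+1}$, and these preserve $\mathcal{M}_\infty$), so closure follows; generation and independence then go as you wrote.

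Your closure argument takes a genuinely different route: rather than redoing the combinatorial core of \cite{HL10}, you use \Cref{HL-grothendieck-polynomial} as a black box and transfer the finite-rank conclusion to $\uqslinf$ via the stabilization results \Cref{hproposition-char} and \Cref{remark-char-product}. This works, but note a quantifier slip: one cannot arrange that $\chi_q(V)\chi_q(W)$ and $\chi_q(N_n)$ agree at \emph{every} weight $\le\omega(m_1m_2)$ for a single $n$ (the former is an infinite sum); what you actually need, and what suffices for the triangular comparison of composition factors, is agreement at the finitely many \emph{dominant} weights $\le\omega(m_1m_2)$, together with agreement of the finitely many coefficients of dominant $p$ in $\chi_q(L(p'))$ versus $\chi_q(L_n(p'))$. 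Both are finite collections, so a single large $n$ does the job. Your approach buys modularity (no need to reopen the HL10 argument) at the cost of this extra bookkeeping; the paper's intended direct approach is shorter once one is willing to rerun the combinatorics. Your generation and independence arguments are standard and match what the direct approach would give.
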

\begin{proof}
	This is completely analogous to \Cref{HL-grothendieck-polynomial}.
\end{proof}

We consider the algebra \(\mathcal{F}_\infty = \mathbb{Z}[f_i, x_i^{\pm 1} \mid i \in \mathbb{Z}]\). We let \(\mathcal{A}_\infty \subseteq \mathcal{F}_\infty\) be the cluster algebra associated to \(\Gamma_\infty\), with initial cluster \((f_i,x_i)_{i\in I_\infty}\) such that \(x_i\) corresponds to the mutable vertex \(i\) and \(f_i\) to the frozen vertex \(i'\). \par
It is clear that every sequence of mutation of \(\Gamma_\infty\) is supported on the vertices of \([-n,n]\) for \(n\) large enough. In particular, we deduce that the cluster variables are in bijection with the almost positive roots of the root system of \(A_\infty\), and these can be read from their denominator vectors. \par
For an almost positive root \(\alpha \in \Delta\), we let \(x[\alpha] \in \mathcal{A}_\infty\) be the associated (non-frozen) cluster variable. In particular \(x[-\alpha_i] = x_i\). \par
\begin{lemma}\label{cluster-polynomial}
	The cluster algebra \(\mathcal{A}_\infty\) is isomorphic to the polynomial algebra in countably infinitely many variables
	\begin{align*}
		\mathbb{Z}\left[x[-\alpha_i], x[\alpha_i] \mid i \in I_\infty\right].
	\end{align*}
\end{lemma}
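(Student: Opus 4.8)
The plan is to reduce to the finite-rank statement \Cref{HL-cluster-polynomial} by exhausting $\Gamma_\infty$ with finite full subquivers. For each even $N \ge 0$ let $\Gamma^{(N)}$ be the full ice subquiver of $\Gamma_\infty$ on the vertices $\{i, i' \mid |i| \le N+1\}$. Because $N+2$ is even, the relabelling $i \mapsto i+N+2$ identifies $\Gamma^{(N)}$ with the quiver $\Gamma_{2N+3}$ of \Cref{Recollection-C1}, matching frozen vertices, arrows and parities exactly; so, viewing $\mathcal{A}(\Gamma^{(N)})$ as a subring of $\mathcal{F}_\infty$ in the obvious way, \Cref{HL-cluster-polynomial} gives $\mathcal{A}(\Gamma^{(N)}) = \mathbb{Z}[x[-\alpha_i], x[\alpha_i] \mid |i| \le N+1]$, a polynomial subring of $\mathcal{F}_\infty$.

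The heart of the argument is a compatibility lemma: if $\sigma$ is a finite sequence of mutations of $\Gamma_\infty$ supported on the vertices $\{i \mid |i| \le N\}$, then applying $\sigma$ to the initial seed of $\Gamma^{(N)}$ yields, at every vertex of $\Gamma^{(N)}$, the same cluster variable as applying $\sigma$ to the initial seed of $\Gamma_\infty$. I would prove this by induction on the length of $\sigma$, keeping track of three facts: the two clusters agree on $\{i, i' \mid |i|\le N+1\}$, the two quivers agree on $\{i, i' \mid |i|\le N+1\}$, and in $\Gamma_\infty$ no arrow ever joins a vertex with $|i| \le N+1$ to a vertex with $|i| \ge N+2$ other than the two arrows present in the initial quiver. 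The inductive step uses that a mutation at a vertex $z$ with $|z| \le N$ only affects arrows incident to $z$ and only creates arrows between former neighbours of $z$, so the relevant data never leaves $\{i, i' \mid |i|\le N+1\}$ and coincides in both quivers by the inductive hypothesis. Since, as recalled just before the lemma, every cluster variable of $\mathcal{A}_\infty$ comes from a finite mutation sequence supported on some $\{|i| \le N\}$ — and the non-frozen ones are labelled by almost positive roots via denominator vectors compatibly with the finite-rank labelling — it follows from the compatibility lemma and the first paragraph that every cluster variable of $\mathcal{A}_\infty$, frozen $f_i$ or non-frozen $x[\alpha]$, lies in $\mathbb{Z}[x[-\alpha_i], x[\alpha_i] \mid i \in I_\infty]$.

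As $\mathcal{A}_\infty$ is generated by its cluster variables, this shows $\mathcal{A}_\infty \subseteq \mathbb{Z}[x[-\alpha_i], x[\alpha_i] \mid i \in I_\infty]$; the reverse inclusion is clear since each $x[-\alpha_i]$ (initial cluster) and each $x[\alpha_i]$ (one mutation away) is a cluster variable of $\mathcal{A}_\infty$. Finally, the $x[\pm\alpha_i]$ are algebraically independent over $\mathbb{Z}$: a polynomial relation among them would involve finitely many variables, hence live in some $\mathbb{Z}[x[\pm\alpha_i] \mid |i| \le N+1] = \mathcal{A}(\Gamma^{(N)})$, contradicting \Cref{HL-cluster-polynomial}. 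Hence $\mathcal{A}_\infty$ is the polynomial ring in the countably many variables $x[-\alpha_i], x[\alpha_i]$, $i \in I_\infty$. I expect the compatibility lemma to be the only real obstacle — one must be careful that mutations supported on $\{|i| \le N\}$ genuinely do not ``leak out'' of $\Gamma^{(N)}$, which is the role of the one-vertex buffer — while the rest is a routine exhaustion layered on \Cref{HL-cluster-polynomial}.
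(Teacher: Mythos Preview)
Your proposal is correct and follows essentially the same approach as the paper: reduce to the finite-rank result \Cref{HL-cluster-polynomial} via the observation that any finite mutation sequence of \(\Gamma_\infty\) is supported on a finite interval. The paper's proof is a one-line pointer to this observation (the paragraph preceding the lemma) together with \Cref{HL-cluster-polynomial}, whereas you spell out in detail the compatibility lemma that makes this reduction work; the buffer argument you sketch is exactly what is implicit in the paper's ``discussion above''.
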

\begin{proof}
	This follows from the discussion above and \Cref{HL-cluster-polynomial}
\end{proof}
To an almost positive root \(\alpha\in \Delta\) we associate a monomial \(m_\alpha \in \mathcal{M}_{\infty}\) as follows. We set
\begin{align*}
	m_{-\alpha_i} = \begin{cases}
		Y_{i,q^{\xi(i)+2}} & i \in (I_\infty)_0, \\
		Y_{i,q^{\xi(i)}} & i \in (I_\infty)_1,
	\end{cases} && \text{and} && m_{\alpha_i} = \begin{cases}
	Y_{i,q^{\xi(i)}} & i \in (I_\infty)_0, \\
	Y_{i,q^{\xi(i)+2}} & i \in (I_\infty)_1,
	\end{cases}
\end{align*}
for the simple roots. For a non-simple positive root \(\alpha = \sum_{i \in [i_1,i_2]} \alpha_i\) we set \(m_{\alpha} = \prod_{i\in[i_1,i_2]} m_{\alpha_i}\). \par
We let 
\begin{align*}
	\iota : \mathcal{A}_\infty \rightarrow \mathcal{R}(\mathcal{C}_1(\uqslinf))
\end{align*} 
be the ring homomorphism defined by \(\iota(x[\pm \alpha_i]) = [L(m_{\pm\alpha_i})]\). By \Cref{grothendieck-polynomial} and \Cref{cluster-polynomial}, the map \(\iota\) is a well defined algebra isomorphism. 
\begin{proposition}\label{slinfty-cluster}
	The isomorphism \(\iota\) satisfies
	\begin{align*}
		\iota(x[\alpha]) = [L(m_\alpha)] && \text{and} && \iota(f_i) = [L(Y_{i,q^{\xi(i)}}Y_{i,q^{\xi(i)+2}})]
	\end{align*}
	for any almost positive root \(\alpha\) and \(i \in \mathbb{Z}\). 
\end{proposition}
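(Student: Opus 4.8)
The plan is to deduce the proposition from its finite-rank counterpart, \Cref{HR-sln-cluster}, using that $\uqslinf$ is the union of the subalgebras $\hat{\mathcal{U}}_{[-n,n]}\cong\uqsl{2n+2}$ and that both $\mathcal{A}_\infty$ and $\mathcal{R}(\mathcal{C}_1(\uqslinf))$ arise as ``limits'' of the corresponding finite-rank objects. First I would set up the finite approximation on the cluster side. Fix an almost positive root $\alpha$ with $\Supp(\alpha)\subseteq[-m,m]$ and take $n\ge m$ with $n$ odd; then the full subquiver of $\Gamma_\infty$ on the vertices $\{i,i'\mid i\in[-n,n]\}$ is isomorphic, via the parity-preserving relabelling $i\mapsto i+n+1$, to the quiver $\Gamma_{2n+1}$ of \Cref{Recollection-C1}. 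Write $N=2n+1$ and let $\alpha'$ be the image of $\alpha$. By the discussion preceding \Cref{cluster-polynomial}, every mutation sequence of $\Gamma_\infty$ is eventually supported on such an interval, so for $n$ large $x[\alpha]$ is a cluster variable of the sub-cluster-algebra $\mathcal{A}_N\hookrightarrow\mathcal{A}_\infty$ attached to that subquiver, and it corresponds there to $x[\alpha']$. Since $\mathcal{A}_N\cong\mathbb{Z}[x[\pm\alpha'_j]]$ is a polynomial ring (\Cref{HL-cluster-polynomial}), we may write $x[\alpha']=\Phi(x[\pm\alpha'_j])$ for a polynomial $\Phi$, and hence $x[\alpha]=\Phi(x[\pm\alpha_j])$ in $\mathcal{A}_\infty$ with the same (re-indexed) $\Phi$; the same discussion applies verbatim with $x[\alpha]$ replaced by a frozen variable $f_i$.

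Next I would transfer the identity through $q$-characters. As $\iota$ is a ring homomorphism with $\iota(x[\pm\alpha_j])=[L(m_{\pm\alpha_j})]$, we have $\iota(x[\alpha])=\Phi([L(m_{\pm\alpha_j})])$, and it is enough to prove this equals $[L(m_\alpha)]$; by injectivity of $\chi_q$ (which follows from \Cref{corollary-order}) it suffices to compare $q$-characters weight space by weight space. Fix a weight $\lambda$. Because $\chi_q$ turns fusion products into products and, by \Cref{hproposition-char} and \Cref{remark-char-product}, each of the finitely many products occurring in $\Phi$ stabilises in the weight space $\lambda$, for $n$ large we obtain
\begin{align*}
	\Phi\bigl(\chi_q(L(m_{\pm\alpha_j}))\bigr)_\lambda
	= \Phi\bigl(\chi_q(L_n(m_{\pm\alpha_j}))\bigr)_\lambda
	= \chi_q\bigl(\iota_N(x[\alpha'])\bigr)_\lambda
	= \chi_q\bigl(L_n(m_\alpha)\bigr)_\lambda
	= \chi_q\bigl(L(m_\alpha)\bigr)_\lambda ,
\end{align*}
where the first equality is \Cref{remark-char-product}, the second uses the cluster identity $x[\alpha']=\Phi(x[\pm\alpha'_j])$ together with $\iota_N$ being a ring homomorphism and $\chi_q$ turning products into products, the third is \Cref{HR-sln-cluster} in rank $N+1=2n+2$ (using that $\xi$ and the monomials $m_{\pm\alpha_i}$, $m_\alpha$ depend only on the parity of the index, so that $m_{\alpha'}$ read inside $\uqsl{N+1}$ matches $m_\alpha$ under $\hat{\mathcal{U}}_{[-n,n]}\cong\uqsl{2n+2}$), and the last equality is again \Cref{hproposition-char}. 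As $\lambda$ was arbitrary, $\iota(x[\alpha])=[L(m_\alpha)]$. The computation for $f_i$ is identical, now using that \Cref{HR-sln-cluster} identifies $\iota_N$ of the frozen variable corresponding to $f_i$ with the class of $L_n(Y_{i,q^{\xi(i)}}Y_{i,q^{\xi(i)+2}})$.

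The step I expect to be the main obstacle is making the two finite approximations genuinely compatible: on the cluster side, the parity bookkeeping ensuring that the chosen subquiver of $\Gamma_\infty$ is exactly $\Gamma_N$ and that $x[\alpha]$ is realised by a mutation sequence confined to it, so that $\Phi$ really is the finite polynomial; and on the representation side, choosing a single $n$ that works simultaneously for the fixed $\lambda$ and for all of the finitely many monomials appearing in $\Phi$, so that \Cref{hproposition-char} and \Cref{remark-char-product} can be applied uniformly. Once these bookkeeping points are settled, no representation-theoretic input beyond \Cref{HR-sln-cluster} is required.
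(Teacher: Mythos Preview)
Your proposal is correct and follows essentially the same approach as the paper: the paper's proof consists of the single sentence ``This is a consequence of \Cref{hproposition-char}, \Cref{remark-char-product} and \Cref{HR-sln-cluster}'', and you have unpacked precisely these three ingredients (weight-space stabilisation, its multiplicative extension, and the finite-rank theorem) into a detailed argument. Your parity bookkeeping and the choice of a single $n$ large enough for all monomials in $\Phi$ are exactly the routine verifications the paper leaves implicit.
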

\begin{proof}
	This is a consequence of \Cref{hproposition-char}, \Cref{remark-char-product} and \Cref{HR-sln-cluster}
\end{proof}
\begin{remark}
	In the following, we will identify \(\mathcal{R}(\mathcal{C}_1(\uqslinf))\) with its image under the \(q\)-character map, and we may see \(\iota\) as a map from \(\mathcal{A}_\infty\) into \(\mathcal{Y}_\infty\).
\end{remark}
}
\section{Folding and \(\uqtor{2n}\)}\label{uqtor}
In this section, we follow the results of \cite{N11}, restricted to the \(A^{(1)}_n\)-case. Because \cite{N11} requires the Dynkin diagram to have no odd cycles, we actually only consider \(A^{(1)}_{2n-1}\), for \(n > 0\). In particular, we work with the algebras \(\uqtor{2n}\). We write \(I_{2n} = \mathbb{Z}/2n{\mathbb{Z}}\), \(\Asub{2n}\coloneq \tilde{A}(\uqtor{2n})\) and \(\mathcal{Y}_{2n}\coloneq \Ytor{2n}\).
\subsection{A subcategory of \(\Oint(\uqtor{2n})\)}
We would like to introduce a subcategory of \(\Oint(\uqtor{2n})\) similar to \(\mathcal{C}_1(\uqslinf)\) introduced before, but we need to be a bit more delicate with the definition. \par
We let \((I_{2n})_0\) be the subset of even integers and \((I_{2n})_1\) be the subset of odd integers of \(I_{2n}\).
We define the map \(\xi : I_{2n} \rightarrow \{0,1\}\) by
\begin{align*}
	\xi(i) = \begin{cases}
		0 & i \in (I_{2n})_0 \\
		1 & i \in (I_{2n})_1
	\end{cases}.
\end{align*}  
We let \(\mathcal{M} \subseteq \Asub{2n,\tor}\) be the subgroup generated by the monomials of the following form:
\begin{itemize}
	\item \(k_{\omega} Y_{i,q^{\xi(i)}}\) with \(\omega \in P\) such \(\omega(\alpha_j^\vee) = \delta_{i,j} \),
	\item \(k_{\omega} Y_{i,q^{\xi(i)+2}}\) with \(\omega \in P\) such \(\omega(\alpha_j^\vee) = \delta_{i,j} \),
\end{itemize}  
for \(i\in I_{2n}\). \par
We let \(\mathcal{C}_1(\uqtor{2n})\) be the full subcategory of \(\Oint(\uqtor{2n})\) of modules with finite composition series such that every composition factor \(S\) is isomorphic to \(L(m)\) for \(m \in \mathcal{M}\). \par
The proof of \Cref{HL-grothendieck-polynomial} shows that the category \(\mathcal{C}_1(\uqtor{2n})\) is closed under fusion product of \cite{H05,H07} and the tensor product of \cite{Lau25}. In particular, the Grothendieck group \(\mathcal{R}(\mathcal{C}_1(\uqtor{2n}))\) has a ring structure (the ring structure given by the fusion product and the tensor product is the same, see Section 6 of \cite{Lau25}).
\begin{remark}
	Contrary to the \(A\)-type case, given a monomial \(m \in \tilde{A}_{2n,\tor}\), the weight \(\omega(m)\) is not fully determined by \({(u_{i,a}(m))}_{i\in I_{2n}, a\in\mathbb{C}^*}\). In particular, if two monomials \(m,m' \in \tilde{A}_{2n,\tor}\) satisfy that \({(u_{i,a}(m))}_{i\in I_{2n}, a\in\mathbb{C}^*} = {(u_{i,a}(m'))}_{i\in I_{2n}, a\in\mathbb{C}^*}\), then \(m^{-1}m' = k_{t\delta}\) for some \(t\in \mathbb{C}\), where \(\delta = \sum_{i=0}^{2n-1} \alpha_i\). \par
	We will now show how to restrict our attention to a smaller class of monomials in a way that captures the structure of \(\mathcal{C}_1(\uqtor{2n})\). This will allow us to better interpret the results of \cite{H-folding} and \cite{N11}.
\end{remark}
For \(i\in I_{2n}\), let \(\omega_{i,\xi(i)}, \omega_{i,\xi(i)+2} \in P\) be such that \(k_{\omega_{i,\xi(i)}}Y_{\omega_{i,\xi(i)}}, k_{\omega_{i,\xi(i)+2}}Y_{\omega_{i,\xi(i)+2}} \in \mathcal{M}\). Let \(\mathcal{M}'\) be the subgroup of \(\mathcal{M}\) generated by \(k_{\omega_{i,\xi(i)}}Y_{\omega_{i,\xi(i)}}, k_{\omega_{i,\xi(i)+2}}Y_{\omega_{i,\xi(i)+2}}\). It follows that if \(m \in \mathcal{M}'\), then \(\omega(m)\) is determined by \({(u_{i,a}(m))}_{i\in I_n, a\in\mathbb{C}^*}\). \par
Let \(\mathcal{C}_1(\mathcal{M}')\) be the subcategory of \(\mathcal{C}_1(\uqtor{2n})\) consisting of modules with composition factors isomorphic to \(L(m)\) for some \(m\in \mathcal{M}'\). There is, a priori, no reason for \(\mathcal{C}_1(\mathcal{M}')\) to be closed under the fusion product, but we can show that \(\mathcal{M}'\) can be chosen such that it does happen.
\begin{lemma}
	For \(i\in I_{2n}\), there is a (non-unique) choice of \(\omega_{i,\xi(i)}, \omega_{i,\xi(i)+2} \in P\) such that \(\mathcal{C}_1(\mathcal{M}')\) is closed under the fusion product. 
\end{lemma}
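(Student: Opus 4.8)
The plan is to choose the lifts $\omega_{i,\xi(i)},\omega_{i,\xi(i)+2}$ so that the resulting subgroup $\mathcal{M}'$ contains the subgroup $\mathcal{N}:=\mathcal{M}\cap\langle A_{i,a}\mid i\in I_{2n},\,a\in\mathbb{C}^{*}\rangle$ of those products of the $A_{i,a}$ that happen to lie in $\mathcal{M}$, and then to observe that this single requirement already forces $\mathcal{C}_{1}(\mathcal{M}')$ to be closed under fusion. First I would record the reduction. Since $\mathcal{C}_{1}(\uqtor{2n})$ is closed under fusion (by the proof of \Cref{HL-grothendieck-polynomial}), and since every composition factor of $L(m_{1})\ast L(m_{2})$ is an $L(m')$ with $m'$ a dominant monomial of $\chi_{q}(L(m_{1}))\chi_{q}(L(m_{2}))$ — all of which therefore already lie in $\mathcal{M}$ — it is enough to prove that such an $m'$ lies in $\mathcal{M}'$ whenever $m_{1},m_{2}\in\mathcal{M}'$. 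Writing $m'=m_{1}'m_{2}'$ with $m_{i}'$ a monomial of $\chi_{q}(L(m_{i}))$ and applying \Cref{Htheorem-char-order} to the quantum affinization $\uqtor{2n}$, one has $m_{i}'m_{i}^{-1}=\prod_{j,a}A_{j,a}^{-n_{j,a}^{(i)}}$ with $n_{j,a}^{(i)}\in\mathbb{Z}_{\ge0}$ (finitely supported). Hence $a_{0}:=m'(m_{1}m_{2})^{-1}=\prod_{j,a}A_{j,a}^{-(n^{(1)}+n^{(2)})_{j,a}}$ is a product of the $A_{j,a}$, and since $m'$ and $m_{1}m_{2}$ lie in $\mathcal{M}$ we get $a_{0}\in\mathcal{N}$; so if $\mathcal{N}\subseteq\mathcal{M}'$ then $m'=m_{1}m_{2}\,a_{0}\in\mathcal{M}'$.

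Next I would set up the splitting. Let $K=\{k_{t\delta}\mid t\in\mathbb{C}\}$, which is the kernel of the projection $\pi\colon\mathcal{M}\to\overline{\mathcal{M}}$ that forgets the weight and keeps only the data $(u_{i,a})$; here $\overline{\mathcal{M}}$ is free abelian (a subgroup of a free abelian group), with basis the images of the generators $k_{\omega}Y_{i,q^{\xi(i)}}$, $k_{\omega}Y_{i,q^{\xi(i)+2}}$, and any complement of $K$ in $\mathcal{M}$ — equivalently any splitting of $\pi$ — is automatically freely generated by monomials of the prescribed shape $k_{\omega_{i,\xi(i)}}Y_{i,q^{\xi(i)}}$, $k_{\omega_{i,\xi(i)+2}}Y_{i,q^{\xi(i)+2}}$. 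Since $K\cong(\mathbb{C},+)$ is divisible, it is an injective $\mathbb{Z}$-module, so $\pi$ splits; moreover a difference-of-sections argument (using injectivity of $K$ to extend a partial section) shows that any section of $\pi$ defined on a subgroup of $\overline{\mathcal{M}}$ extends to a section on all of $\overline{\mathcal{M}}$. Consequently, \emph{provided} $\mathcal{N}\cap K=\{1\}$ — so that $\mathcal{N}$ maps isomorphically onto $\pi(\mathcal{N})$ and is itself such a partial section — we may choose a complement $\mathcal{M}'\supseteq\mathcal{N}$, which by the previous paragraph completes the argument.

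It remains to establish $\mathcal{N}\cap K=\{1\}$, i.e. multiplicative independence of the $A_{i,a}$; this is the heart of the matter. All the $A_{i,a}$ that can occur have $a\in q^{\mathbb{Z}}$ (the monomials of $\mathcal{M}$, and the monomials occurring in the relevant $q$-characters, involve only $Y_{i,q^{s}}$ with $s\in\{0,1,2,3\}$, and $q$-characters stay within a single $q^{\mathbb{Z}}$-orbit of spectral parameters), so it suffices to show: if $\prod_{j\in I_{2n},\,s\in\mathbb{Z}}A_{j,q^{s}}^{c_{j,s}}$ has trivial $(u_{i,a})$-data and $(c_{j,s})$ is finitely supported, then $c\equiv0$. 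Reading off the exponent of $Y_{j,q^{t}}$ in $A_{i,a}=k_{\alpha_{i}}Y_{i,aq^{-1}}Y_{i,aq}Y_{i-1,a}^{-1}Y_{i+1,a}^{-1}$, the vanishing condition is $c_{j,t+1}+c_{j,t-1}=c_{j-1,t}+c_{j+1,t}$ for all $j\in\mathbb{Z}/2n\mathbb{Z}$, $t\in\mathbb{Z}$. Passing to the generating Laurent polynomials $D_{j}(z)=\sum_{s}c_{j,s}z^{s}$ this reads $(z+z^{-1})D_{j}(z)=D_{j-1}(z)+D_{j+1}(z)$, i.e. $M(z)\,(D_{0},\dots,D_{2n-1})^{\mathsf{T}}=0$ with $M(z)=(z+z^{-1})\,\mathrm{Id}-\mathrm{Adj}(C_{2n})$ and $C_{2n}$ the $2n$-cycle. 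Since $\det M(z)=\prod_{k=0}^{2n-1}\bigl(z+z^{-1}-2\cos(\pi k/n)\bigr)$ is a nonzero element of $\mathbb{C}[z,z^{-1}]$, the matrix $M(z)$ is invertible over $\mathbb{C}(z)$, forcing every $D_{j}\equiv0$ and hence $c\equiv0$.

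The main obstacle is exactly this last step — the absence of nontrivial ``$A$-relations'' for $\uqtor{2n}$ — and it is where the affine type of the Dynkin diagram enters: it reduces to invertibility of the $q$-Cartan-type matrix of the cycle $C_{2n}$, which is precisely the phenomenon that breaks down for odd cycles (hence the standing hypothesis in \cite{N11} ruling them out, and the restriction here to $A^{(1)}_{2n-1}$). By contrast the reduction through the Nakajima partial order and the homological extension of the splitting are routine, as is the verification that $\overline{\mathcal{M}}$ is free abelian, so that the chosen complement $\mathcal{M}'$ really is generated by monomials of the form demanded in the statement.
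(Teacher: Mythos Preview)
Your proof is correct but takes a genuinely different route from the paper's. The paper makes the same initial reduction via the Nakajima order, but then argues much more concretely: it invokes Section~7 of \cite{HL10} to see that the only $A$-monomials that can occur are the $2n$ elements $A_{i,q^{\xi(i)+1}}$, writes out explicitly the resulting linear conditions
\[
\alpha_i = \omega_{i,\xi(i)} + \omega_{i,\xi(i)+2} - \omega_{i-1,\xi(i)+1} - \omega_{i+1,\xi(i)+1}
\]
on the unknown lifts, and observes a triangular structure (for $i$ even the variable $\omega_{i,\xi(i)}$ appears only in the $i$-th equation, and for $i$ odd the variable $\omega_{i,\xi(i)+2}$ does), so that an explicit solution can be read off immediately. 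By contrast, you recast the problem as choosing a complement to the divisible subgroup $K=\{k_{t\delta}\}$ containing $\mathcal{N}=\mathcal{M}\cap\langle A_{i,a}\rangle$, use injectivity of $K$ to extend a partial section, and reduce to the multiplicative independence of \emph{all} $A_{i,q^{s}}$, which you verify by a $q$-Cartan determinant. Your argument is more conceptual and proves more than is needed (only the $2n$ monomials $A_{i,q^{\xi(i)+1}}$ matter, and the paper avoids any independence check entirely), while the paper's is shorter and produces explicit lifts. One small correction to your closing remark: the determinant $\prod_{k}\bigl(z+z^{-1}-2\cos(2\pi k/m)\bigr)$ is a nonzero Laurent polynomial for every $m$, so the independence of the $A_{i,a}$ does not in fact fail for odd cycles; the bipartiteness hypothesis in \cite{N11} is needed elsewhere, not here.
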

\begin{proof}
	Following the argument of the proof of \Cref{HL-grothendieck-polynomial}, it is enough to show that if \(m \in \mathcal{M}'\) and \(A \in \mathbb{Z}[A_{i,a} \mid i\in I_{2n}, a\in\mathbb{C}^*]\) are such that \(mA^{-1}\) is a dominant monomial, then \(mA^{-1} \in \mathcal{M}'\).\par 
	So suppose \(m\) and \(A\) are as above, so that \(mA^{-1}\) is dominant. It follows from Section 7 of \cite{HL10} that \(A\) is actually a product of \(A_{i,q^{\xi(i)+1}}\) for \(i \in I_{2n}\). But as
	\begin{align*}
		A_{i,q^{\xi(i)+1}} = k_{\alpha_i}Y_{i,q^{\xi(i)}}Y_{i,q^{\xi(i)+2}}Y_{i-1,q^{\xi(i)+1}}^{-1}Y_{i+1,q^{\xi(i)+1}}^{-1},
	\end{align*}
	we see that if 
	\begin{align}\label{k-equation}
		k_{\alpha_i} = k_{\omega_{i,\xi(i)}} k_{\omega_{i,\xi(i)+2}}  k_{\omega_{i-1,\xi(i)+1}}^{-1}k_{\omega_{i+1,\xi(i)+1}}^{-1}
	\end{align} 
	holds, then \(mA^{-1} \in \mathcal{M}'\). We notice that for \(i\in (I_{2n})_0\), the term \(k_{\omega_{i,\xi(i)}}\) appears only in \(k_{\alpha_i}\). Similarly, for \(i\in (I_{2n})_1\), the term \(k_{\omega_{i,\xi(i)+2}}\) appears only in \(k_{\alpha_i}\). We can, therefore, choose \(\omega_{i,\xi(i)}\) and \(\omega_{i,\xi(i)+2}\) such that \eqref{k-equation} holds, and the claim of the lemma follows.
\end{proof}
From now on let us fix \(\mathcal{M}'\) such that \(C_1(\mathcal{M}')\) is closed under the fusion product. We obtain a ring embedding 
\begin{align*}
	\mathcal{R}(\mathcal{C}_1(\mathcal{M}')) \hookrightarrow \mathcal{R}(\mathcal{C}_1(\uqtor{2n})).
\end{align*} 
It should be noted that every simple module \(L(m)\) in \(\mathcal{C}_1(\uqtor{2n})\) is isomorphic to a product \(L(k_{t\delta})\ast L(m')\) for unique \(m' \in \mathcal{M}'\) and \(t \in \mathbb{C}\). So the structure of \(\mathcal{C}_1(\uqtor{2n})\) is determined by the structure of \(\mathcal{C}_1(\mathcal{M}')\), and we have an isomorphism of rings
\begin{align*}
	\theta : \mathbb{Z}[\mathbb{C}] \otimes_\mathbb{Z} \mathcal{R}(\mathcal{C}_1(\mathcal{M}')) \rightarrow \mathcal{R}(\mathcal{C}_1(\uqtor{2n}))
\end{align*} 
given by
\begin{align*}
	\theta(t\otimes [L(m')]) = [L(k_{t\delta})\ast L(m')] = [L(k_{t\delta}\cdot m')],
\end{align*}
where \(\mathbb{Z}[\mathbb{C}]\) is the group algebra of the additive group of complex numbers. The multiplication in \(\mathbb{Z}[\mathbb{C}] \otimes_\mathbb{Z} \mathcal{R}(\mathcal{C}_1(\mathcal{M}'))\) is given by 
\begin{align*}
	(t\otimes [L(m')]) \cdot (t'\otimes [L(m'')]) = (t+t')\otimes [L(m')\ast L(m'')].
\end{align*}
This justifies working with the category \(\mathcal{C}_1(\mathcal{M}')\) and its Grothendieck ring. \par
Because of how we defined \(\mathcal{M}'\), for simplicity of notation, we can drop the \(k_\omega\) variables when describing simple modules and \(q\)-characters of modules in \(\mathcal{C}_1(\mathcal{M}')\). We thus ignore from now on the variables \(k_\omega\) appearing in \(\Asub{2n}\) and \(\Ysub{2n}\). We set \(\mathcal{K}_{2n,\tor} \coloneq \mathcal{R}(\mathcal{C}_1(\mathcal{M}'))\) and identify it with its image in \(\Ysub{2n}\).

\subsection{The map \(\phi_{2n}\) and folding of \(q\)-characters}

\begin{definition}[\cite{H-folding}]
	Let \(\phi_{2n} : \Asub{\infty} \rightarrow \Asub{2n}\) be the group homomorphism given by
	\begin{align*}
		\phi_{2n}(Y_{i,a}) = Y_{[i],a},
	\end{align*}
	where \(i \in \mathbb{Z}\), \([i] \in \mathbb{Z}/2n\mathbb{Z}\) is its equivalence class modulo \(2n\), and \(a \in \mathbb{C}^*\).
\end{definition}

\begin{remark}
	Because we ignore the variables \(k_\omega\), there is ambiguity in how \(\phi_{2n}\) should extend to a morphism \(\Ysub{\infty} \rightarrow \mathcal{Y}_{2n}\). However, we will require that \(\phi_{2n}(A_{i,a})=A_{[i],a}\), so that in practice we are able to apply the map \(\phi_{2n}\) to \(q\)-characters of simple \(\uqslinf\)-modules, in the following sense. Let \(V\) be a simple integrable \(\uqslinf\)-module, with highest dominant monomial \(m\). Therefore its \(q\)-character is of the form
	\begin{align*}
		\chi_q(V) = \sum_{m' \leq m} a_{m'} m'
	\end{align*} 
	with \(m' \in \Asub{\infty}\), \(a_{m'} \in \mathbb{N}\) and \(a_m = 1\). We shall understand \(\phi_{2n}(\chi_q(V))\) as
	\begin{align*}
		\phi_{2n}(\chi_q(V)) = \sum_{m' \leq m} a_n k_{\omega_{m'}} \phi_{2n}(m') \in \Ysub{2n},
	\end{align*}
	where \(\omega_{m}\in P\) is such that \(k_{\omega_{m}}\phi_{2n}(m) \in \Asub{2n}\) (so \(\omega_m\) is determined by \(m\) up to \(t\delta\)), and \(\omega_{m'} \in P\) is the unique element satisfying \(k_{\omega_{m'}} \phi_{2n}(m') \leq k_{\omega_{m}}\phi_{2n}(m)\) in \(\Asub{2n}\). If for every dominant monomial \(m\) we fix such an \({\omega_{m}}\), we can extend \(\phi_{2n}\) linearly to obtain a homomorphism of abelian groups from \(\mathcal{R}(\Oint(\uqslinf))\) to \(\Ysub{2n}\). \par
	In \cite{H-folding}, Hernandez shows that \(\phi_{2n}\) takes \(q\)-characters of integrable \(\uqslinf\)-modules to virtual \(q\)-characters in \(\mathcal{R}(\Oint(\uqtor{2n}))\). More explicitly, this means that if \(V\) is a module in \(\Oint(\uqslinf)\), then
	\begin{align*}
		\phi_{2n}(\chi_q(V)) = \sum_{\substack{m'\in \Asub{2n},\\ m' \text{ dominant}}} a_{m'} \chi_q(L(m')),
	\end{align*}
	where \(a_{m'} \in \mathbb{Z}\), and \(a_{m'} = 0\) for almost all \(m'\). \par 
	In particular, by fixing appropriate values of \({\omega_m}\), we can understand \(\phi_{2n}\) as a ring homomorphism from \(\mathcal{R}(\mathcal{C}_1(\uqslinf))\) to \(\mathcal{R}(\mathcal{C}_1(\mathcal{M}'))\).
\end{remark}

In \cite{H-folding}, Hernandez conjectures the following.

\begin{conjecture}[Conjecture 5.3 in \cite{H-folding}]\label{conj-fold}
	The map \(\phi_{2n}\) takes \(q\)-characters of integrable \(\uqslinf\)-modules to \(q\)-characters of actual \(\uqtor{2n}\)-modules. 
\end{conjecture}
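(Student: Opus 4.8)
Since Conjecture~\ref{conj-fold} concerns \emph{all} integrable $\uqslinf$-modules, I do not expect to prove it in full; the plan is to establish it for the class of simple modules whose $q$-characters are cluster variables belonging to an orbit-cluster of $\mathcal{A}_\infty$ — this is precisely \Cref{theorem-ice-folding-intro}, and it already yields the first examples outside the Kirillov--Reshetikhin case. The overall strategy is to realize $\phi_{2n}$ as the cluster-folding map $\psi$ of \Cref{folding} under the monoidal categorifications of $\mathcal{C}_1(\uqslinf)$ and $\mathcal{C}_1(\uqtor{2n})$.

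\emph{Step A: the folding is well behaved.} I would let $G=\langle\Sigma\rangle$ act on $(\Gamma_\infty)_0$ by $\Sigma(i)=i+2n$, $\Sigma(i')=(i+2n)'$. Since $2n$ is even, $\Sigma$ preserves the parity of vertices and hence the alternating orientation, so the explicit description of the arrows of $\Gamma_\infty$ immediately gives that $(\Gamma_\infty,F)$ is strongly $G$-admissible (freeness is clear; there are no virtual loops or $2$-cycles because vertices in a common orbit are at distance $2n\geq 2$ in $A_\infty$ and even vertices are sources of the principal part), and that $\Gamma_\infty^G=\Gamma_{2n}$. The real content is strong global foldability: by \Cref{remark-vitrual-2-cycles} it suffices to check that no sequence of orbit-mutations ever creates a virtual $2$-cycle. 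On the principal (mutable) part this is exactly \Cref{folding-theorem} applied to $Q$ the alternating $2n$-cycle, which is not cyclically oriented; I would then extend the infinite-ribbon/triangulation argument of \Cref{section_folding_Ainf} to account for the $\Sigma$-stable family of pendant frozen vertices $i'$, checking directly that flips of $\Sigma$-orbits of arcs never make a frozen vertex the apex of a virtual $2$-cycle, using that frozen vertices are never flipped and the local picture around each is controlled.

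\emph{Step B: identifying $\psi$ with $\phi_{2n}$.} From \Cref{proposition-cluster-folding}, $\psi\colon\mathcal{F}_\infty\to\mathcal{F}_{2n}$, $x_i\mapsto x_{[i]}$, $f_i\mapsto f_{[i]}$, sends each orbit-cluster of $\mathcal{A}_\infty$ to a genuine cluster of $\mathcal{A}_{2n}=\mathcal{A}^G$. Writing $\iota$ for the isomorphism of \Cref{slinfty-cluster} and $\iota_{2n}$ for Nakajima's analogue $\mathcal{A}_{2n}\xrightarrow{\ \sim\ }\mathcal{R}(\mathcal{C}_1(\uqtor{2n}))$, I would prove that $\phi_{2n}\circ\iota=\iota_{2n}\circ\psi$. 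Both composites are ring homomorphisms out of the polynomial ring $\mathcal{A}_\infty$, so it is enough to match them on the generators $x[-\alpha_i]=x_i$, $x[\alpha_i]$ and the frozen $f_i$: there $\iota$ returns the $q$-character of a fundamental module $L(Y_{i,a})$ (resp.\ the Kirillov--Reshetikhin module $L(Y_{i,q^{\xi(i)}}Y_{i,q^{\xi(i)+2}})$), and Hernandez's theorem that $\phi_{2n}$ carries $q$-characters of Kirillov--Reshetikhin $\uqslinf$-modules to those of Kirillov--Reshetikhin $\uqtor{2n}$-modules, together with the facts that $\xi$ depends only on parity and $\phi_{2n}(A_{i,a})=A_{[i],a}$, forces agreement. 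The delicate bookkeeping here is fixing the $k_\omega$-ambiguity in $\phi_{2n}$ compatibly with the Nakajima partial orders on $\Asub{\infty}$ and $\Asub{2n}$ (\Cref{corollary-order}), so that $\phi_{2n}$ is genuinely defined on $q$-characters and the square commutes.

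\emph{Step C and the main obstacle.} Given a simple $L$ whose class lies in an orbit-cluster, write $\chi_q(L)=\iota(v)$ for an orbit-cluster variable $v$; then $\phi_{2n}(\chi_q(L))=\iota_{2n}(\psi(v))$ with $\psi(v)$ a cluster variable of $\mathcal{A}_{2n}$ by Step~B and \Cref{proposition-cluster-folding}, hence — by Nakajima's monoidal categorification — the $q$-character of a real simple $\uqtor{2n}$-module. This proves \Cref{conj-fold} for all such $L$, and the corollary to \Cref{proposition-cluster-folding} shows every cluster variable of $\mathcal{A}_{2n}$ arises this way. I expect the main obstacles to be (i) carrying the surface model of \Cref{section_folding_Ainf} through in the presence of the frozen vertices, which forces an adaptation of the clean annulus/ribbon picture; and (ii) the structural limitation that this argument only reaches classes that are cluster \emph{monomials} — extending \Cref{conj-fold} beyond orbit-clusters would require genuinely new input on how $\phi_{2n}$ behaves on non-cluster (e.g.\ imaginary) classes, which is exactly what \Cref{imag-conj-intro} is meant to probe.
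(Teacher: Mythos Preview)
Your overall architecture is right and matches the paper: reduce \Cref{conj-fold} to the orbit-cluster case, prove strong global foldability of $(\Gamma_\infty,F)$ with respect to $G=\langle\Sigma\rangle$, identify the folding map $\psi$ with $\phi_{2n}$, and then invoke Nakajima's categorification on the target. Step~B (checking $\phi_{2n}\circ\iota=\iota_{2n}\circ\psi$ on polynomial generators, using \Cref{Hphi-theorem} for the fundamental and KR modules) is clean and correct; the paper does not isolate this as a separate step but it is implicit.

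The substantive divergence is your Step~A for the frozen vertices. You propose to carry the ribbon/triangulation model of \Cref{section_folding_Ainf} through with pendant frozen vertices and argue geometrically that no virtual $2$-cycle involving a frozen orbit is ever created. The paper does \emph{not} do this. Instead it proves foldability of the full ice quiver by a representation-theoretic bootstrap: assuming the first $l{-}1$ orbit-mutations are admissible, it writes the two exchange relations at a vertex $i$ (one in $\mathcal{A}_\infty$ and one in $\mathcal{A}_{2n,\tor}$), applies $\phi_{2n}$ to the first, and then invokes a tailored lemma (\Cref{lemma-frozen-determined}) about ``$\mu$-dominated'' elements of $\Ysub{2n}$. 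That lemma uses a parity-type degree function $d$ on $\mathcal{M}'$ with $d(A_{i,q^{\xi(i)+1}}^{-1})=-2$ and $d(\text{frozen})=0$ to show that in any relation $N\cdot M=S_1F_1+S_2F_2$ with $N$ $\mu$-dominated, the frozen monomial on the ``top'' side is uniquely determined by $M,S_1,S_2$; comparing the two exchange relations then forces the frozen arrow-counts to coincide, which is exactly condition~\ref{virt-cyc} via \Cref{remark-vitrual-2-cycles}. So the paper's argument for the frozen part lives entirely inside the $q$-character ring, not in a surface model.

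Your geometric route may well work, but as written it is a promise rather than an argument: you would need to specify how the frozen vertices sit in the ribbon model (boundary segments, laminations, \dots), and then actually rule out the virtual $2$-cycles case by case. The paper's trick sidesteps all of that, at the cost of the somewhat delicate \Cref{lemma-frozen-determined}. Conversely, a successful surface argument would be purely combinatorial and independent of the categorification on the toroidal side, which is conceptually attractive; but you should be aware that this is not how the paper proceeds, and that the obstacle you flag in (i) is precisely the step the author chose to avoid.
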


He then shows that the conjecture holds for a specific class of integrable \(\uqslinf\)-modules.

\begin{theorem}[Theorem 4.2 in \cite{H-folding}]\label{Hphi-theorem}
	Let \(i \in \mathbb{Z}\), let \(a \in \mathbb{C}^*\) and \(k \geq 0\) and set 
	\begin{align*}
		m_{i,a,k}^\mathrm{KR} = \prod_{j=1}^{k} Y_{i,aq^{2(j-1)}} \in \Asub{\infty} \qquad \text{and} \qquad 		m_{[i],a,k}^\mathrm{KR} = \prod_{j=1}^{k} Y_{[i],aq^{2(j-1)}} \in \Asub{2n}.
	\end{align*}
	Then \(\phi_{2n}(\chi_q(L(m_{i,a,k}^\mathrm{KR}))) = \chi_q(L(m_{[i],a,k}^\mathrm{KR}))\). That is, the map \(\phi_{2n}\) takes \(q\)-characters of Kirillov--Reshetikhin \(\uqslinf\)-modules to \(q\)-characters of Kirillov--Reshetikhin \(\uqtor{2n}\)-modules. \qed
\end{theorem}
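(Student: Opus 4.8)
The plan is to compute both $q$-characters explicitly and then check the folding identity directly on the resulting combinatorial descriptions, so the real work is to make those descriptions precise on each side.

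First I would handle the $\uqslinf$ side. By \Cref{htheorem-simples} and \Cref{hproposition-char}, for each weight $\lambda$ the component $\chi_q(L(m_{i,a,k}^{\mathrm{KR}}))_\lambda$ coincides with $\chi_q(L_m(m_{i,a,k}^{\mathrm{KR}}))_\lambda$ for $m$ large, where $L_m(m_{i,a,k}^{\mathrm{KR}})$ is a Kirillov--Reshetikhin module of $\hat{\mathcal{U}}_{[-m,m]} \simeq \uqsl{2m+2}$. Kirillov--Reshetikhin modules in type $A$ are \emph{special} (their $l$-highest weight monomial is the unique dominant monomial of $\chi_q$), so their $q$-characters are computed by the Frenkel--Mukhin algorithm and admit the classical closed formula as a sum over semistandard Young tableaux of a fixed rectangular shape with entries in $\{1,\dots,2m+2\}$, each tableau contributing an explicit monomial in the $Y_{j,b}$. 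Letting $m\to\infty$ yields a stable, weightwise-finite description of $\chi_q(L(m_{i,a,k}^{\mathrm{KR}}))$ as a sum over semistandard tableaux of that shape with entries in $\mathbb{Z}$; this fixes which variables $Y_{j,b}$ occur and gives an explicit bijection between tableaux and monomials.

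Next I would treat the $\uqtor{2n}$ side, which is the harder one. Here I would invoke Nakajima's geometric realization of the $\mathcal{C}_1$-modules via quiver varieties attached to the affine quiver of type $A_{2n-1}^{(1)}$ \cite{N11}, together with the fact that Kirillov--Reshetikhin modules remain special in this setting, so their $q$-characters are again governed by the Frenkel--Mukhin recursion; this should produce a combinatorial formula for $\chi_q(L(m_{[i],a,k}^{\mathrm{KR}}))$ of the same shape as above but with the linear alphabet $\{1,\dots,2m+2\}$ replaced by its cyclic analogue — equivalently a sum over integer tableaux taken up to the simultaneous shift of all entries by $2n$, each contributing the corresponding monomial in the $Y_{[j],b}$. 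An alternative, more algebraic route is to note that $\phi_{2n}$ is a ring homomorphism with $\phi_{2n}(Y_{i,a}) = Y_{[i],a}$ and $\phi_{2n}(A_{i,a}) = A_{[i],a}$, and that both families obey a $T$-system — the $A_\infty$-type one for $\uqslinf$, obtained from the finite-type $T$-systems via \Cref{remark-char-product}, and the $A_{2n-1}^{(1)}$-type one for $\uqtor{2n}$ from \cite{N11} — so that $\phi_{2n}$ carries the first $T$-system onto the second; one then inducts on $k$ with trivial base case $k=0$, using that the $T$-system together with specialness and positivity of $q$-characters determines its solution once the fundamental ($k=1$) cases are known.

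The final step matches the two sides under $\phi_{2n}$: reduction of tableau entries mod $2n$ induces a surjection from integer tableaux onto cyclic ones, one checks it intertwines the two monomial assignments ($\phi_{2n}(m_T)=m_{[T]}$), and one verifies that the number of integer tableaux lying over a given cyclic tableau equals the multiplicity of the associated monomial in $\chi_q(L(m_{[i],a,k}^{\mathrm{KR}}))$. I expect this last point to be the main obstacle. Already for $k=1$ the node-support of $\chi_q(L(Y_{i,a}))$ over $\uqslinf$ is unbounded, so infinitely many distinct $\uqslinf$-monomials collapse onto finitely many $\uqtor{2n}$-monomials under $\phi_{2n}$; no termwise comparison is available, and the whole content of the statement is that these collapses reassemble, with exactly the right multiplicities, into the $q$-character of the toroidal Kirillov--Reshetikhin module (which is itself larger, owing to the extra ``toroidal'' monomials absent on the $A_\infty$ side). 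In the $T$-system formulation the same obstacle is concentrated in the base case $\phi_{2n}(\chi_q(L(Y_{i,a}))) = \chi_q(L(Y_{[i],a}))$, which is precisely where one genuinely needs input about the representation theory of $\uqtor{2n}$ rather than formal manipulation of $q$-characters.
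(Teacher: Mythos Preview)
The paper does not supply its own proof here: the theorem is quoted from \cite{H-folding} and closed with a \qed, so there is no argument in the present paper to compare your proposal against.

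Your outline is sound as a strategy, and you have correctly located the crux. The $T$-system route is the cleaner of your two suggestions: once one knows that the $T$-system holds on both sides (for $\uqslinf$ via \Cref{remark-char-product} and the finite-type $T$-system, and for $\uqtor{2n}$ as in \cite{H05,N11}) and that $\phi_{2n}$ is a ring homomorphism with $\phi_{2n}(A_{i,a})=A_{[i],a}$, the induction on $k$ is automatic, and everything collapses to the fundamental case $k=1$. You say this yourself.

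The gap is that you stop at \emph{naming} the $k=1$ case rather than proving it. Your tableau-matching proposal for this step is not workable as stated. You posit that the toroidal $\chi_q(L(Y_{[i],a}))$ is described by ``integer tableaux taken up to the simultaneous shift of all entries by $2n$''; but that quotient has only finitely many classes for each fixed shape, whereas the toroidal fundamental $q$-character is an infinite sum (the modules in $\Oint(\uqtor{2n})$ are not finite-dimensional). So no such model can be correct, and in any case none is established in the references you cite. The argument in \cite{H-folding} proceeds differently: one shows that Kirillov--Reshetikhin modules are \emph{special} on the toroidal side as well, so that the Frenkel--Mukhin algorithm computes $\chi_q$ on both sides, and then checks that $\phi_{2n}$ intertwines the two runs of the algorithm --- essentially because the local $\uqsl{2}$-reductions that drive the algorithm depend only on Cartan data that $\phi_{2n}$ respects. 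This bypasses any need for a closed combinatorial formula on the toroidal side, which is precisely the input you are missing.
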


It is worth noting that outside the case of Kirillov--Reshetikhin modules, the conjecture remains open. Later in this paper, we will give examples of non-Kirillov--Reshetikhin modules that satisfy this conjecture (see \Cref{cor-new-cases} and \Cref{further}).

\subsection{Monoidal categorification of \(C_1(\mathcal{M}')\)}

We now return to our main goal for this section. For \(n > 0\), we let \(\tilde{\Gamma}_{2n}\) be the quiver with \(4n\) vertices, denoted by \(\{1,...,2n\}\cup\{1',...,2n'\}\). We think of them as elements of \(I_{2n}\coloneq \mathbb{Z}/2n\mathbb{Z}\). The arrows of \(\tilde{\Gamma}_{2n}\) are given by 

\begin{align*}
	&\Arr_{\tilde{\Gamma}_{2n}}(i,j) = \begin{cases}
		\delta_{i,j+1}+\delta_{i,j-1} & i\in (I_{2n})_0 \\ 0 & i \in (I_{2n})_1
	\end{cases}, 
	&& \Arr_{\tilde{\Gamma}_{2n}}(i',j') = 0, \\
	&\Arr_{\tilde{\Gamma}_{2n}}(i',j) = \begin{cases}
		\delta_{i,j} & i\in (I_{2n})_0 \\ 0 & i \in (I_{2n})_1
	\end{cases},
	&& \Arr_{\tilde{\Gamma}_{2n}}(i,j') = \begin{cases}
		0 & i\in (I_{2n})_0 \\ \delta_{i,j} & i \in (I_{2n})_1
	\end{cases}.
\end{align*}
We set the vertices \(i'\) to be frozen. 
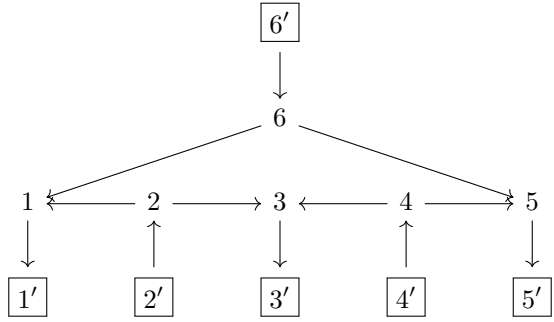
\begin{figure}[h]
	\begin{center}
		\[\begin{tikzcd}
			&& \boxed{6'} \\
			&& 6 \\
			1 & 2 & 3 & 4 & 5 \\
			\boxed{1'} & \boxed{2'} & \boxed{3'} & \boxed{4'} & \boxed{5'}
			\arrow[from=1-3, to=2-3]
			\arrow[from=2-3, to=3-1]
			\arrow[from=2-3, to=3-5]
			\arrow[from=3-1, to=4-1]
			\arrow[from=3-2, to=3-1]
			\arrow[from=3-2, to=3-3]
			\arrow[from=3-3, to=4-3]
			\arrow[from=3-4, to=3-3]
			\arrow[from=3-4, to=3-5]
			\arrow[from=3-5, to=4-5]
			\arrow[from=4-2, to=3-2]
			\arrow[from=4-4, to=3-4]
		\end{tikzcd}\]
	\end{center}
	\caption{The quiver \(\tilde{\Gamma}_{2n}\) with \(n = 3\). The square vertices are frozen}
\end{figure} 

We can identify \(\tilde{\Gamma}_{2n}\) with the folding \(\Gamma_{\infty}^G\) of \(\Gamma_{\infty}\) with respect to the group \(G\subseteq \Sym((\Gamma_{\infty})_0)\), generated by the shift \(\Sigma\) by \(2n\) of \((\Gamma_{\infty})_0\), which is given by
\begin{align*}
	\Sigma(i) = i+2n \qquad \text{and} \qquad \Sigma(i') = (i+2n)'
\end{align*}
for \(i \in I_\infty\). \par 
We consider the algebra \(\mathcal{F}_{2n,\tor} = \mathbb{Z}[f_i,x_i^{\pm1} \mid i \in I_{2n}]\). We let \(\mathcal{A}_{2n,\tor} \subseteq \mathcal{F}_{2n,\tor}\) be the cluster algebra associated to \(\tilde{\Gamma}_{2n}\), with initial cluster \((x_i,f_i)_{i\in I_{2n}}\) such that \(x_i\) corresponds to the vertex \(i\) and \(f_i\) corresponds to the vertex \(i'\).
\begin{theorem}[Section 5, Propositions 7.5, 7.6 in \cite{N11}]\label{tor-cluster}
	There is an isomorphism of algebras
	\begin{align*}
		\iota : \mathcal{A}_{2n,\tor} \rightarrow \mathcal{K}_{2n,\tor}
	\end{align*}
	such that 
	\begin{align*}
		\iota(x_i) = \begin{cases}
			\chi_q(L(Y_{i,q^{\xi(i)+2}})) & \text{for } i \in (I_{2n})_0 \\
			\chi_q(L(Y_{i,q^{\xi(i)}})) & \text{for } i \in (I_{2n})_1
		\end{cases} \qquad \text{and} \qquad \iota(f_i) = \chi_q(L(Y_{i,q^{\xi(i)}}Y_{i,q^{\xi(i)+2}})).
	\end{align*}
	Every non-initial cluster variable is of the form
	\begin{align*}
		\chi_q\left(L\left(\prod_{i \in (I_{2n})_0}Y_{i,q^{\xi(i)}}^{a_i}\prod_{i \in (I_{2n})_1}Y_{i,q^{\xi(i)+2}}^{b_i}\right)\right)
	\end{align*} 
	for some \(a_i,b_i \in \mathbb{N}\).\par
	Finally, let \(m_1,...,m_s\) be dominant monomials such that \(\chi_q(L(m_1)),...,\chi_q(L(m_s))\) correspond to cluster variables. Then 
	\begin{align*}
		\prod_{i=1}^s\chi_q(L(m_i)) = \chi_q\left(L\left(\prod_{i=1}^sm_i\right)\right)
	\end{align*}
	if and only if \(\chi_q(L(m_1)),...,\chi_q(L(m_s))\) belong to the same cluster. \qed
\end{theorem}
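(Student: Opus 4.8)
The plan is to deduce \Cref{tor-cluster} from the general monoidal categorification theorem of Nakajima \cite{N11}, specialized to the Dynkin diagram of affine type \(A^{(1)}_{2n-1}\), and to carry out the necessary identification of combinatorial data. Since \(2n\) is even, the cycle \(A^{(1)}_{2n-1}\) contains no odd cycle, so Nakajima's hypotheses apply to \(\uqtor{2n}\) (and, for \(n=1\), to the modified algebra of \cite{H-folding}): he constructs a category \(\mathcal{C}_1\), an ice quiver, and an algebra isomorphism from the Grothendieck ring of \(\mathcal{C}_1\) onto the associated cluster algebra under which cluster monomials correspond to classes of real simple modules.

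First I would recall Nakajima's construction precisely enough to pin down the quiver and the spectral parameters. His quiver is a bipartite ``half'' of the repetition quiver of the Dynkin diagram, carrying one mutable and one frozen vertex per node, with the spectral parameter at node \(i\) governed by a \(2\)-coloring \(\xi\) of the diagram. For \(A^{(1)}_{2n-1}\) this coloring is exactly the even/odd coloring of \(I_{2n}=\mathbb{Z}/2n\mathbb{Z}\) fixed in this section, and the resulting quiver coincides, up to labelling conventions, with \(\tilde{\Gamma}_{2n}\): its principal part is the \(2n\)-cycle with alternating orientation, and the frozen vertices \(i'\) are attached as drawn. This is the periodic analogue of the finite type \(A\) computation of \cite{HL10} recalled in \Cref{Recollection-C1}; it simultaneously determines which of \(Y_{i,q^{\xi(i)}}\), \(Y_{i,q^{\xi(i)+2}}\) sits at each mutable vertex, and comparing with the definitions of \(\mathcal{M}\) and \(\mathcal{M}'\) yields the stated formulas for \(\iota(x_i)\) and \(\iota(f_i)\).

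Next I would take care of the \(k_\omega\) variables. Nakajima's isomorphism targets \(\mathcal{R}(\mathcal{C}_1(\uqtor{2n}))\), whereas \Cref{tor-cluster} is phrased with \(\mathcal{K}_{2n,\tor}=\mathcal{R}(\mathcal{C}_1(\mathcal{M}'))\). Using the preceding lemma (a choice of \(\mathcal{M}'\) closed under fusion) together with the isomorphism \(\theta\), one checks that the initial cluster of Nakajima's cluster algebra already lies in \(\mathcal{K}_{2n,\tor}\), and that every exchange relation expresses a new cluster variable through fusion products of simple modules in \(\mathcal{C}_1(\mathcal{M}')\); hence the whole cluster structure descends and \(\iota\) restricts to the desired isomorphism \(\mathcal{A}_{2n,\tor}\to\mathcal{K}_{2n,\tor}\). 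The description of non-initial cluster variables as \(q\)-characters of simples \(L\!\left(\prod_{i\in(I_{2n})_0}Y_{i,q^{\xi(i)}}^{a_i}\prod_{i\in(I_{2n})_1}Y_{i,q^{\xi(i)+2}}^{b_i}\right)\), and the multiplicativity criterion — that \(\prod_i\chi_q(L(m_i))=\chi_q(L(\prod_i m_i))\) holds exactly when the \(\chi_q(L(m_i))\) lie in a common cluster — are then Propositions 7.5 and 7.6 of \cite{N11} transported through this identification: they encode that cluster monomials are classes of real simple modules and that a product of simple classes coming from cluster variables is again a simple class precisely when the factors belong to one cluster.

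The main obstacle is the convention-matching of the second step: verifying that Nakajima's general quiver, specialized to \(A^{(1)}_{2n-1}\), agrees on the nose with \(\tilde{\Gamma}_{2n}\) — orientations, placement of the frozen vertices, and the \(Y_{i,q^{\xi(i)}}\) versus \(Y_{i,q^{\xi(i)+2}}\) assignment — given that \cite{N11}, \cite{HL10} and the present paper use different normalizations; one must also confirm that Nakajima's no-odd-cycle hypothesis is used only through \(2n\) being even. A secondary difficulty is making the passage from \(\mathcal{C}_1(\uqtor{2n})\) to \(\mathcal{C}_1(\mathcal{M}')\) fully rigorous, i.e.\ checking that dropping the \(k_\omega\) factors loses no information and that the factorization \(L(m)\cong L(k_{t\delta})\ast L(m')\) is compatible with the exchange relations — the lemma above is what makes this go through.
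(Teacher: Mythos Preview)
Your proposal is correct and matches the paper's approach: the paper simply cites this as a result from \cite{N11} (Section 5, Propositions 7.5, 7.6) without giving any proof, marking it with a \qed. Your sketch of how to specialize Nakajima's general construction to \(A^{(1)}_{2n-1}\), identify his quiver with \(\tilde{\Gamma}_{2n}\), and descend from \(\mathcal{C}_1(\uqtor{2n})\) to \(\mathcal{C}_1(\mathcal{M}')\) is exactly the elaboration one would write to justify this citation.
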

\begin{remark}
	We can rephrase the last part of the theorem in the following way: the simple modules \(L(m_1),...,L(m_s)\) correspond to cluster variables which belong to a common cluster if and only if 
	\begin{align*}
		L(m_1)\ast \cdots \ast L(m_s)
	\end{align*} 
	is a simple module, if and only if \(\prod_{i=1}^{s}[L(m_i)]\) is a class of a simple module. 
\end{remark}

In what follows, using the isomorphisms \(\iota\), we will identify \(q\)-characters with their the corresponding classes in the Grothendieck ring, and with the elements in the associated cluster algebra. 
\subsection{Folding \(\mathcal{A}_\infty\) onto \(\mathcal{A}_{2n,\tor}\)}
As we remarked earlier, the quiver \(\tilde{\Gamma}_{2n}\) is the folding of \(\Gamma_{\infty}\) with respect to the group \(G\) generated by the shift \(\Sigma\). By \Cref{folding-theorem}, the mutable part of \(\Gamma_\infty\) is strongly globally foldable with respect to \(G\), and folds onto the mutable part of \(\tilde{\Gamma}_{2n}\). Our goal is to use the results above to show that the whole ice quiver \(\Gamma_{\infty}\) is strongly globally foldable with respect to \(G\). \par
Before doing so, we need the following technical lemma. For \(L \in \Ysub{2n}\), we write \(D(L) \subseteq \Asub{2n}\times \mathbb{Z}_{\neq 0}\) for the set of monomials appearing in \(L\) with their multiplicity. This means that \((m',a_{m'}) \in D(L)\) if and only if the monomial \(m'\) appears in \(L\) with coefficient \(a_{m'} \neq 0\). We will say that \(L\) is \(\mu\)-dominated with \(\mu\)-dominant weight \(m \in \mathcal{M}'\) if
\begin{enumerate}
	\item \(m\) is a dominant monomial such that \((m,1) \in D(L)\),
	\item if \((m',a_{m'}) \in D(L)\) then \(m' \leq m\),
	\item \(m\cdot Y_{i,\xi(i)}^{-1}Y_{i,\xi(i)+2}^{-1}\) is not dominant for any \(i \in I_{2n}\).
\end{enumerate}
For example, following \Cref{tor-cluster}, every simple module corresponding to a cluster variable in \(\mathcal{K}_{2n,\tor}\) is \(\mu\)-dominated, with \(\mu\)-dominant weight one of the following:
\begin{itemize}
	\item \(Y_{i,q^{\xi(i)+2}}\) with \(i \in (I_{2n})_0\),
	\item \(Y_{i,q^{\xi(i)}}\) with \(i \in (I_{2n})_1\),
	\item \(\prod_{i \in (I_{2n})_0}Y_{i,q^{\xi(i)}}^{a_i}\prod_{i \in (I_{2n})_1}Y_{i,q^{\xi(i)+2}}^{b_i}\) with \(a_i,b_i \in \mathbb{N}\).
\end{itemize}
\begin{lemma}\label{lemma-frozen-determined}
	Let \(M,S_1,S_2 \in \mathcal{K}_{2n,\tor}\) be \(q\)-characters of simple integrable \(\uqtor{2n}\)-modules, and let \(F_1,F_2 \in \mathcal{K}_{2n,\tor}\) be a pair of cluster monomials in frozen variables such that
	\begin{align}\label{first_lemma_equation}
		N\cdot M = S_1\cdot F_1 + S_2 \cdot F_2
	\end{align}
	for some \(\mu\)-dominated \(N \in \Ysub{2n}\). Suppose \(F_1',F_2' \in \mathcal{K}_{2n,\tor}\) is another pair of cluster monomials in frozen variables such that
	\begin{align*}
		N'\cdot M = S_1\cdot F_1' + S_2 \cdot F_2'
	\end{align*}
	for some \(\mu\)-dominated \(N' \in \Ysub{2n}\). Then \(F_1 = F_1'\) or \(F_2 = F_2'\).
\end{lemma}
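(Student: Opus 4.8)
The plan is to compare, for every $q$-character in sight, its highest monomial with respect to the Nakajima order, and to read off the exponents of $F_1$ and $F_2$ from that data, using crucially that $N$ and $N'$ are $\mu$-dominated. Write $\hat{Y}_j = Y_{j,q^{\xi(j)}}Y_{j,q^{\xi(j)+2}}$ for the highest monomial of the frozen variable $f_j$, and write $F_1 = \prod_j f_j^{a_j}$, $F_2 = \prod_j f_j^{b_j}$, $F_1' = \prod_j f_j^{a_j'}$, $F_2' = \prod_j f_j^{b_j'}$ with all exponents $\ge 0$, so that the highest monomial of $F_1$ is $\prod_j \hat{Y}_j^{a_j}$, and similarly for the others. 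Let $m_M, m_{S_1}, m_{S_2} \in \mathcal{M}'$ be the highest monomials of $M, S_1, S_2$; since these are $q$-characters of simple modules, they are dominant and occur with coefficient one, and I write $m_M = \prod_j Y_{j,q^{\xi(j)}}^{u_j}Y_{j,q^{\xi(j)+2}}^{v_j}$ and $m_{S_k} = \prod_j Y_{j,q^{\xi(j)}}^{s_j^{(k)}}Y_{j,q^{\xi(j)+2}}^{t_j^{(k)}}$ for $k = 1, 2$. The only facts about the Nakajima order I would need are that the highest monomial of a product of $q$-characters with coefficient-one highest monomials is the product of those monomials, again with coefficient one, and that every monomial of the $\mu$-dominated element $N$ is $\le m_N$, the $\mu$-dominant weight of $N$, which occurs with coefficient one; hence $m_N m_M$ is the unique highest monomial of $N\cdot M$, occurring with coefficient one, and likewise $m_{N'} m_M$ for $N'\cdot M$.

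First I would analyse \eqref{first_lemma_equation} at the level of highest monomials. The monomials $m_{S_1}\prod_j \hat{Y}_j^{a_j}$ and $m_{S_2}\prod_j \hat{Y}_j^{b_j}$ are the highest monomials of $S_1 F_1$ and $S_2 F_2$, each with coefficient one; since $S_1 F_1$ and $S_2 F_2$ are products of simple $q$-characters, hence $q$-characters with non-negative coefficients, both of these monomials still occur with positive coefficient in the sum $N\cdot M$, so both are $\le m_N m_M$. As $m_N m_M$ occurs with coefficient exactly one in $N\cdot M$ and its coefficient there equals the sum of its coefficients in $S_1 F_1$ and in $S_2 F_2$ — each of which is $1$ if $m_N m_M$ equals the relevant highest monomial and $0$ otherwise — exactly one of $m_{S_1}\prod_j \hat{Y}_j^{a_j}$, $m_{S_2}\prod_j \hat{Y}_j^{b_j}$ equals $m_N m_M$, and it is then strictly larger than the other. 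The same holds for the second equation: exactly one of $m_{S_1}\prod_j \hat{Y}_j^{a_j'}$, $m_{S_2}\prod_j \hat{Y}_j^{b_j'}$ equals $m_{N'} m_M$, and dominates the other. I would then split into the \emph{aligned} case, where both equations have their highest monomial on the same index (say $m_N m_M = m_{S_1}\prod_j \hat{Y}_j^{a_j}$ and $m_{N'} m_M = m_{S_1}\prod_j \hat{Y}_j^{a_j'}$, or the same with all $1$'s replaced by $2$'s), and the \emph{mixed} case, where they sit on opposite indices.

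The crux is to exclude the mixed case; say $m_N m_M = m_{S_1}\prod_j \hat{Y}_j^{a_j}$ and $m_{N'} m_M = m_{S_2}\prod_j \hat{Y}_j^{b_j'}$. Then $m_{S_2}\prod_j \hat{Y}_j^{b_j} < m_{S_1}\prod_j \hat{Y}_j^{a_j}$ and $m_{S_1}\prod_j \hat{Y}_j^{a_j'} < m_{S_2}\prod_j \hat{Y}_j^{b_j'}$; multiplying these, $\left(\prod_j \hat{Y}_j^{a_j+b_j'}\right)\left(\prod_j \hat{Y}_j^{a_j'+b_j}\right)^{-1}$ is a nontrivial product of the variables $A_{i,a}$ with non-negative exponents. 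Since $\prod_j \hat{Y}_j^{a_j+b_j'}$ and $\prod_j \hat{Y}_j^{a_j'+b_j}$ are dominant monomials in $\mathcal{M}'$, Section 7 of \cite{HL10} (as already used above) forces this product to be a nontrivial product $\prod_i A_{i,q^{\xi(i)+1}}^{e_i}$ with $e_i \ge 0$. On the other hand, writing $A_{i,q^{\xi(i)+1}} = \hat{Y}_i\, Y_{i-1,q^{\xi(i)+1}}^{-1}Y_{i+1,q^{\xi(i)+1}}^{-1}$ and noting that $Y_{i\pm1,q^{\xi(i)+1}}$ is one of the two generators $Y_{i\pm1,q^{\xi(i\pm1)}}, Y_{i\pm1,q^{\xi(i\pm1)+2}}$ of $\mathcal{M}'$ at the vertex $i\pm1$, a direct bookkeeping of the exponent of each generator, vertex by vertex, shows that any identity $\prod_i \hat{Y}_i^{d_i} = \prod_i A_{i,q^{\xi(i)+1}}^{e_i}$ with all $e_i \ge 0$ forces $e_i = 0$ for every $i$. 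This contradicts nontriviality, so the mixed case cannot occur.

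Finally, in the aligned case (say with the highest monomial on index $1$), from $m_N m_M = m_{S_1}\prod_j \hat{Y}_j^{a_j}$ I solve $m_N = \prod_j Y_{j,q^{\xi(j)}}^{\,s_j^{(1)}+a_j-u_j}\,Y_{j,q^{\xi(j)+2}}^{\,t_j^{(1)}+a_j-v_j}$. The $\mu$-dominated hypothesis forces $m_N$ to be dominant and not divisible by any $\hat{Y}_j$ (the last condition in the definition of $\mu$-dominated); equivalently $s_j^{(1)}+a_j-u_j \ge 0$, $t_j^{(1)}+a_j-v_j \ge 0$ and at least one of them is $0$. Together with $a_j \ge 0$ this determines $a_j = \max(u_j - s_j^{(1)},\, v_j - t_j^{(1)},\, 0)$, which depends only on $M$ and $S_1$; the identical computation for the second equation gives the same value for $a_j'$, so $F_1 = F_1'$. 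Symmetrically, if the highest monomial sits on index $2$ one obtains $F_2 = F_2'$, proving the claim. I expect the real obstacle to be the third step: one must both justify passing from a product of arbitrary $A_{i,a}$'s lying in $\mathcal{M}'$ to a product of the $A_{i,q^{\xi(i)+1}}$'s, and carry out the exponent bookkeeping showing that a nontrivial product of the $\hat{Y}_i$'s is never such a product of the $A_{i,q^{\xi(i)+1}}$'s.
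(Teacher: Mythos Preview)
Your proof is correct, and the final determination of the exponents $a_j$ from the $\mu$-dominated condition is exactly what the paper does. However, your route to showing that the \emph{same} index wins in both equations differs from the paper's.

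The paper avoids the aligned/mixed split entirely by introducing a group homomorphism $d:\mathcal{M}'\to\mathbb{Z}$ with $d(Y_{i,q^{\xi(i)}})=\mp1$ and $d(Y_{i,q^{\xi(i)+2}})=\pm1$ according to the parity of $i$. One checks in one line that $d(\hat{Y}_j)=0$ and $d(A_{i,q^{\xi(i)+1}})=2$; hence $d$ is constant on the frozen factor, strictly decreases along the Nakajima order, and the winning index in \eqref{first_lemma_equation} is simply the one with the larger $d(s_k)$. Since $d(s_1),d(s_2)$ are intrinsic to $S_1,S_2$, the same index wins in the second equation automatically, and your mixed case never needs to be considered.

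Your exclusion of the mixed case is valid but does more work: you multiply the two strict inequalities and then carry out the vertex-by-vertex bookkeeping to show that no nontrivial product $\prod_i A_{i,q^{\xi(i)+1}}^{e_i}$ with $e_i\ge 0$ equals a Laurent monomial in the $\hat{Y}_j$'s. That bookkeeping is essentially a hidden rediscovery of the function $d$ (comparing, at each vertex, the two exponents of $Y_{j,q^{\xi(j)}}$ and $Y_{j,q^{\xi(j)+2}}$). So the two arguments are equivalent in spirit; the paper's packaging via $d$ is shorter and makes the ``same index wins'' step immediate, while yours is more hands-on and also yields the explicit formula $a_j=\max(u_j-s_j^{(1)},\,v_j-t_j^{(1)})$, which the paper leaves implicit. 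One small cosmetic point: the extra $0$ in your $\max$ is harmless but redundant, since the existence of the $\mu$-dominated $N$ already forces $\max(u_j-s_j^{(1)},\,v_j-t_j^{(1)})\ge 0$.
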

\begin{proof}
	Write \(m\), \(s_1\) and \(s_2\) for the highest dominant monomials of \(M\), \(S_1\) and \(S_2\), respectively. Denote by \(k\) the \(\mu\)-dominant monomial of \(N\). Finally, write \(l_1\) and \(l_2\) for the highest dominant monomials of \(F_1\) and \(F_2\), respectively. \par
	We define a group morphism \(d:\mathcal{M}'\rightarrow \mathbb{Z}\) by
	\begin{align*}
	d(Y_{i,q^{\xi(i)}}) = \begin{cases}
		-1 & i \in (I_{2n})_0\\
		1 & i \in (I_{2n})_1
	\end{cases} \qquad \text{and} \qquad d(Y_{i,q^{\xi(i)+2}}) = \begin{cases}
		1 & i \in (I_{2n})_0\\
		-1 & i \in (I_{2n})_1
	\end{cases}.
	\end{align*}
	We can compute easily 
	\begin{align*}
		d(Y_{i,q^{\xi(i)}}Y_{i,q^{\xi(i)+2}}) = d(Y_{i,q^{\xi(i)}})+d(Y_{i,q^{\xi(i)+2}}) = 0
	\end{align*}
	for every \(i \in I_{2n}\), so that \(d(l_1)=d(l_2) = 0\). \par
	Similarly, we can compute
	\begin{align*}
		d(A_{i,\xi(i)+1}^{-1}) & = d(Y_{i,q^{\xi(i)}}^{-1}Y_{i,q^{\xi(i)+2}}^{-1}Y_{i-1,q^{\xi(i)+1}}Y_{i+1,q^{\xi(i)+1}}) \\
		& = -d(Y_{i,q^{\xi(i)}}Y_{i,q^{\xi(i)+2}})+d(Y_{i-1,q^{\xi(i)+1}})+d(Y_{i+1,q^{\xi(i)+1}}) \\
		& = d(Y_{i-1,q^{\xi(i)+1}})+d(Y_{i+1,q^{\xi(i)+1}}) \\
		& = \begin{cases}
			d(Y_{i-1,q^{\xi(i-1)+2}})+d(Y_{i+1,q^{\xi(i+1)+2}}) & i \in (I_{2n})_0\\
			d(Y_{i-1,q^{\xi(i-1)}})+d(Y_{i+1,q^{\xi(i+1)}}) & i \in (I_{2n})_1
		\end{cases}.
	\end{align*}
	Therefore \(d(A_{i,\xi(i)+1}^{-1}) = -2 \) for any \(i \in I_{2n}\). \par
	We deduce that if \(m'\) is any dominant monomial of \(N\cdot M\), then \(d(m') \leq d(k\cdot m)\). Moreover, \(d(m') = d(k \cdot m)\) if and only \(m' = k \cdot m\). \par
	By \eqref{first_lemma_equation}, we know \(s_1\cdot l_1\) and \(s_2\cdot l_2\) are monomials appearing in \(N\cdot M\), and exactly one of them equals \(k\cdot m\). We deduce \(d(s_1) = d(s_1\cdot l_1) < d(s_2\cdot l_2) = d(s_2)\) or \(d(s_2) = d(s_2\cdot l_2) < d(s_1\cdot l_1) = d(s_1)\). Without loss of generality, assume the latter holds. Then \(k\cdot m = s_1 \cdot l_1\), which is to say \(k = m^{-1}\cdot s_1\cdot l_1\). As
	\begin{align*}
		l_1 =  \prod_{i \in I_{2n}}(Y_{i,q^{\xi(i)}}Y_{i,q^{\xi(i)+2}})^{a_i}
	\end{align*}
	for some \(a_i \in \mathbb{N}\), and because \(k\) is a \(\mu\)-dominant monomial, the values \(a_i\) are completely determined by \(m^{-1}\cdot s_1\). This means \(l_1\) is completely determined by \(m^{-1}\cdot s_1\). \par 
	Now let \(l_1'\) and \(l_2'\) be the highest dominant monomials of \(F_1'\) and \(F_2'\), respectively. By a completely analogous argument for the equation 
	\begin{align*}
		N'\cdot M = S_1\cdot F_1' + S_2 \cdot F_2',
	\end{align*}
	we deduce that \(l_1 = l_1'\). This shows that \(F_1 = F_1'\), which concludes the proof.
\end{proof}

We now prove the main theorem of this section, which is also the principal result of this paper. The main takeaway of the theorem is that simple modules of \(\uqslinf\) are related to simple modules of \(\uqtor{2n}\), via the map \(\phi_{2n}\). Moreover, this map is a folding map of the associated cluster algebras. \par
In the proof, given a cluster \(X\) of a cluster algebra, we will denote by \(X_v\) the cluster variable in \(X\) corresponding to the vertex \(v\).

\begin{theorem}\label{theorem-ice-folding}
	The ice quiver \(\Gamma_\infty\) is strongly globally foldable with respect to \(G\). \par
	Let \(\psi : \mathcal{F}_\infty \rightarrow \mathcal{F}_{2n,\tor}\) be the homomorphism defined by \(\psi(f_i) = f_{[i]}\) and \(\psi(x_i) = x_{[i]}\) for \(i\in\mathbb{Z}\) (see \Cref{proposition-cluster-folding}). If \(\chi_q(L(m))\) is a cluster variable belonging to an orbit-cluster of \(\mathcal{A}_\infty\), then the folded cluster variable satisfies
	\begin{align*}
		\psi(\chi_q(L(m))) = \phi_{2n}(\chi_q(L(m))).
	\end{align*}
\end{theorem}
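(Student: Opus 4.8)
The plan is to prove the two assertions simultaneously, by induction on the length $l$ of a sequence of orbit-mutations, maintaining three hypotheses: (a) the quiver $\Gamma^{(l)}$ of the resulting orbit-cluster is strongly $G$-admissible; (b) its folding $(\Gamma^{(l)})^G$ is the corresponding $l$-fold mutation of $\tilde{\Gamma}_{2n}$; (c) for every cluster variable $u$ of this orbit-cluster, writing $\iota(u)=\chi_q(L(m_u))$ as in \Cref{slinfty-cluster}, one has $\psi(u)=\phi_{2n}(\chi_q(L(m_u)))$. Granting the induction, (a) for all $l$ is exactly strong global foldability, and (c) is the second assertion. The base case $l=0$ combines \Cref{slinfty-cluster} and \Cref{tor-cluster} with \Cref{Hphi-theorem}: the initial cluster variables of $\mathcal{A}_\infty$ correspond to $[L(Y_{i,q^{\xi(i)}})]$, $[L(Y_{i,q^{\xi(i)+2}})]$ and, for the frozen ones, $[L(Y_{i,q^{\xi(i)}}Y_{i,q^{\xi(i)+2}})]$; applying \Cref{Hphi-theorem} for $k=1$ and $k=2$, together with $\xi(i)=\xi([i])$, identifies $\phi_{2n}$ of each with $\iota$ of the corresponding initial variable of $\mathcal{A}_{2n,\tor}$, which is precisely $\psi$ of the original; and $(\Gamma_\infty)^G=\tilde{\Gamma}_{2n}$ holds by construction.

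For the inductive step, orbit-mutate $\Gamma^{(l)}$ at a mutable orbit $K=Gv$. The principal part is already under control: the principal part of $\Gamma_\infty$ is the alternating orientation of $A_\infty$, which is the quiver $A_Q$ attached to the quiver $Q$ given by the alternating orientation of the $(2n)$-cycle; since $Q$ is not cyclically oriented, \Cref{folding-theorem} shows $A_Q$ is strongly globally foldable with respect to $G$, and mutation at a mutable vertex changes the arrows between two mutable vertices exactly as it does in the principal part (frozen vertices do not enter the relevant length-two path count). Hence the principal part of $\Gamma^{(l+1)}$ is strongly $G$-admissible and folds to the corresponding mutation of the principal part of $\tilde{\Gamma}_{2n}$; in particular no virtual $2$-cycle between two mutable vertices can arise, and no virtual $2$-cycle between two frozen vertices can arise either, since $\Gamma_\infty$ has no arrows between frozen vertices and mutation at a mutable vertex creates none.

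The crux is therefore to rule out a virtual $2$-cycle incident to a frozen vertex, equivalently to show that the frozen-incident arrows of $\Gamma^{(l+1)}$ fold correctly. Here I would exploit that the orbit-cluster variables are $q$-characters of simple modules, so the exchange relations among them are rigid. Concretely, transport the $\mathcal{A}_\infty$ exchange relation for the new cluster variable $u_v^{(l+1)}$ along the ring homomorphism $\phi_{2n}:\mathcal{R}(\mathcal{C}_1(\uqslinf))\to\mathcal{R}(\mathcal{C}_1(\mathcal{M}'))$; the products involved are finite by local finiteness of $\Gamma_\infty$, and grouping them over $G$-orbits, using that $\phi_{2n}$ is invariant under $\Sigma$ and the inductive hypothesis (c), yields an identity
\begin{align*}
	\phi_{2n}\big(\chi_q(L(m_v^{(l+1)}))\big)\cdot\psi(u_v^{(l)}) = S_1 F_1 + S_2 F_2 \qquad \text{in } \mathcal{K}_{2n,\tor},
\end{align*}
with $S_1,S_2$ the mutable factors and $F_1,F_2$ cluster monomials in frozen variables. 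Comparing with the exchange relation in $\mathcal{A}_{2n,\tor}$ obtained by mutating at $K$ the cluster $\psi(\underline{u})$, where $\underline{u}$ is the orbit-cluster at level $l$ (a genuine cluster by (b)), whose mutable factors are forced to agree by the mutable-part folding, \Cref{lemma-frozen-determined} forces the frozen factors $F_1,F_2$ to coincide on both sides. This is exactly the statement that the frozen-incident arrows of $\Gamma^{(l+1)}$ match those of the mutation of $\tilde{\Gamma}_{2n}$, hence (a) and (b) hold at level $l+1$ (no new virtual $2$-cycle); cancelling the nonzero common factor $\psi(u_v^{(l)})$ then gives (c) at level $l+1$, closing the induction.

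The main obstacle is precisely this last step. The mutable part is disposed of cleanly by the already-established \Cref{folding-theorem} and its surface model, but the frozen-incident arrows are not governed by that model; preventing an orbit-mutation from creating a virtual $2$-cycle against a frozen vertex requires the rigidity of $q$-characters of simple modules, packaged in \Cref{lemma-frozen-determined}. Everything else — the base case, the mutable-part argument, and the bookkeeping of transporting exchange relations through $\phi_{2n}$ and $\psi$ — is routine once one knows that $\phi_{2n}$ and $\psi$ are ring homomorphisms agreeing on the initial seed.
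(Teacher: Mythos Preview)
Your overall architecture matches the paper's exactly: \Cref{folding-theorem} disposes of the mutable part, and \Cref{lemma-frozen-determined} is the tool for the frozen-incident arrows. Two points need repair.

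First, your induction is mis-indexed. The exchange relation you write for $u_v^{(l+1)}$ has frozen exponents $b_j=\Arr_{\Gamma^{(l)}}(v,j')$ and $d_j=\Arr_{\Gamma^{(l)}}(j',v)$: it encodes the arrows of $\Gamma^{(l)}$ at $v$, not those of $\Gamma^{(l+1)}$. Under your hypotheses (a) and (b) at level $l$, these arrows are already known to fold, so the comparison of $F_1,F_2$ with the $\mathcal{A}_{2n,\tor}$ side is vacuous and says nothing about $\Gamma^{(l+1)}$. The paper organises the induction differently: it assumes admissibility only up to level $l{-}1$ (enough to define the level-$l$ orbit-cluster and to have $\psi=\phi_{2n}$ on it via \Cref{proposition-cluster-folding}), then mutates the level-$l$ quiver at an \emph{arbitrary} vertex $i$; the exponents appearing are precisely the unverified arrows of $\Gamma^{(l)}$, and it is these that \Cref{lemma-frozen-determined} constrains. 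In your framework you would instead need to mutate $\Gamma^{(l+1)}$ at an arbitrary vertex, using the level-$(l{+}1)$ orbit-cluster.

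Second, \Cref{lemma-frozen-determined} only gives $F_1=F_1'$ \emph{or} $F_2=F_2'$, not both; you overclaim here. The paper closes this with the equivalence recorded in \Cref{remark-vitrual-2-cycles}: for each pair of orbits, $\Arr_{\hat\mu_K Q}(Gv_1,v_2)=\Arr_{\mu_K(Q^G)}(Gv_1,Gv_2)$ holds if and only if the same equality holds with the roles of $v_1,v_2$ swapped. So once the lemma forces one frozen factor to match, this equivalence forces the other, and only then does condition~\ref{virt-cyc} follow.
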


\begin{proof}
	Following \Cref{strongly-gf}, in order to show that \(\Gamma_\infty\) is strongly globally foldable, we need to prove that given a finite sequence \(K_1,...,K_{l}\) of mutable \(G\)-orbits in \(\Gamma_\infty\), such that \((\hmu_{K_{i}}\circ\cdots\circ\hmu_{K_1}) (\Gamma_{\infty})\) is strongly \(G\)-admissible for \(1 \leq i \leq l-1\), then the quiver \((\hmu_{K_l}\circ\cdots\circ\hmu_{K_1}) (\Gamma_{\infty})\) is also strongly \(G\)-admissible. \par
	In the situation above, an immediate consequence of the proof of \Cref{proposition-cluster-folding} is that if \(X\) is the orbit-cluster in \(\mathcal{A}_\infty\) corresponding to the sequence of orbit-mutation at \(K_1,...,K_{l}\) and \(\tilde{X}\) is the cluster in \(\mathcal{A}_{2n,\tor}\) corresponding the the sequence of mutation at \(K_{1},...,K_{l}\), then \(\tilde{X}_{[i]}= \psi(X_i)=\phi_{2n}(X_i)\). We will use this fact in the course of the proof. Moreover, it shows that in order to prove the theorem, it will suffice to show that \(\Gamma_\infty\) is strongly globally foldable. \par
	So let us show that \(\Gamma_{\infty}\) is strongly globally foldable. \par
	Let \(K_1,...,K_{l}\) be a finite sequence of mutable \(G\)-orbits in \(\Gamma_\infty\), such that \((\hmu_{K_{i}}\circ\cdots\circ\hmu_{K_1}) (\Gamma_{\infty})\) is strongly \(G\)-admissible for \(1 \leq i \leq l-1\). Let \(X\) and \(\tilde{X}\) be the cluster of \(\mathcal{A}_\infty\) and \(\mathcal{A}_{2n,\tor}\), respectively, corresponding to sequence of mutations at \(K_1,...,K_l\). \par 
	To show that \((\hmu_{K_l}\circ \cdots \circ\hmu_{K_1})(\Gamma_{\infty})\) is strongly \(G\)-admissible, following \Cref{mutation-behavior} and \Cref{remark-vitrual-2-cycles}, it suffices to verify 
	\begin{align}\label{suffices-to-verify}
		\Arr_{(\hmu_{K_l}\circ \cdots \circ \hmu_{K_1})(\Gamma_\infty)}(Gv_1,v_2) = \Arr_{(\mu_{K_l}\circ \cdots \circ \mu_{K_1})(\Gamma_{2n})}(Gv_1,Gv_2)
	\end{align}
	for any \(v_1,v_2 \in (\Gamma_\infty)_0\). According to \Cref{folding-theorem}, this equality holds for \(v_1,v_2\) mutable. So we only need to show the equality for either \(v_1\) frozen or \(v_2\) frozen. \par
	We denote by \(f_j\) the frozen variable of \(\mathcal{A}_{\infty}\) corresponding to vertex \(j'\) of \(\gamma_{\infty}\). Similarly we denote by \(f_{[j]}\) the frozen variables of \(\mathcal{A}_{2n,\tor}\) corresponding to vertex \([j]'\) of \(\tilde{\Gamma}_{2n}\).
	Fix some \(i \in \mathbb{Z}\), and mutate \((\hmu_{K_l}\circ \cdots \circ\hmu_{K_1})(\Gamma_{\infty})\) at \(i\) and \((\mu_{K_l}\circ \cdots \circ \mu_{K_1})(\Gamma_{2n})\) at \([i]\). The corresponding exchange relations are
	\begin{align}\label{exchange_infty}
		X'_i\cdot X_i = \prod_{j\in \mathbb{Z}}X_j^{a_j} f_j^{b_j} + \prod_{j\in \mathbb{Z}}X_j^{c_j} f_j^{d_j}, 
	\end{align}
	\begin{align}\label{exchange_tor}
		\tilde{X}'_{[i]} \cdot \tilde{X}_{[i]} = \prod_{[j]\in \mathbb{Z}/2n\mathbb{Z}}\tilde{X}_{[j]}^{a_{[j]}} f_{[j]}^{b_{[j]}} + \prod_{[j]\in \mathbb{Z}/2n\mathbb{Z}}\tilde{X}_{[j]}^{c_{[j]}} f_{[j]}^{d_{[j]}},
	\end{align}
	with 
	\begin{align*}
		a_j = \Arr_{(\hmu_{K_l}\circ \cdots \circ \hmu_{K_1})(\Gamma_\infty)}(i,j), && b_j = \Arr_{(\hmu_{K_l}\circ \cdots \circ \hmu_{K_1})(\Gamma_\infty)}(i,j'), \\
		c_j = \Arr_{(\hmu_{K_l}\circ \cdots \circ \hmu_{K_1})(\Gamma_\infty)}(j,i), && d_j = \Arr_{(\hmu_{K_l}\circ \cdots \circ \hmu_{K_1})(\Gamma_\infty)}(j',i), 
	\end{align*}
	and
	\begin{align*}
		a_{[j]} = \Arr_{(\mu_{K_l}\circ \cdots \circ \mu_{K_1})(\Gamma_{2n,\tor})}([i],[j]), && b_{[j]} = \Arr_{(\mu_{K_l}\circ \cdots \circ \mu_{K_1})(\Gamma_{2n,\tor})}([i],[j]'), \\
		c_{[j]} = \Arr_{(\mu_{K_l}\circ \cdots \circ \mu_{K_1})(\Gamma_{2n,\tor})}([j],[i]), && d_{[j]} = \Arr_{(\mu_{K_l}\circ \cdots \circ \mu_{K_1})(\Gamma_{2n,\tor})}({[j]}',[i]).
	\end{align*}
	From \Cref{mutation-behavior}, \Cref{remark-vitrual-2-cycles} and \Cref{folding-theorem}, we see that
	\begin{align*}
		\sum_{s\in j\mathbb{Z}} a_s = a_{[j]}, && \sum_{s\in j\mathbb{Z}} b_s \geq  b_{[j]}, && 
		\sum_{s\in j\mathbb{Z}} c_s = c_{[j]}, && \sum_{s\in j\mathbb{Z}} d_s \geq d_{[j]},
	\end{align*}
	for every \(j \in \mathbb{Z}\).
	Moreover, from \Cref{remark-vitrual-2-cycles} we deduce for every \(j \in \mathbb{Z}\) that
	\begin{align}\label{iff-arrows}
		\sum_{s\in j\mathbb{Z}} b_s =  b_{[j]} \iff \sum_{s\in j\mathbb{Z}} d_s = d_{[j]}.
	\end{align}	
	Applying the map \(\phi_{2n}\) to \eqref{exchange_infty}, we get
	\begin{align}\label{exchange_phi}
		\phi_{2n}(X_i')\cdot \tilde{X}_{[i]} = \prod_{[j]\in \mathbb{Z}/2n\mathbb{Z}}\tilde{X}_{[j]}^{a_{[j]}} f_{[j]}^{\sum_{s\in [j]}b_{s}} + \prod_{[j]\in \mathbb{Z}/2n\mathbb{Z}}\tilde{X}_{[j]}^{c_{[j]}} f_{[j]}^{\sum_{s\in [j]}d_{s}}.
	\end{align}
	Both \(\phi_{2n}(X_i')\) and \(\tilde{X}'_i\) are \(\mu\)-dominated in the sense of \Cref{lemma-frozen-determined}. Applying \Cref{lemma-frozen-determined} to the pair of equations \eqref{exchange_tor} and \eqref{exchange_phi}, we deduce \(\sum_{s\in j\mathbb{Z}} b_s =  b_{[j]}\) for every \(j \in \mathbb{Z}\) or \(\sum_{s\in j\mathbb{Z}} d_s = d_{[j]}\) for every \(j \in \mathbb{Z}\). \par
	But \eqref{iff-arrows} tells us both hold, hence \eqref{suffices-to-verify} is satisfied. Therefore \((\hmu_{K_l}\circ \cdots \circ \hmu_{K_1})(\Gamma_{\infty})\) is strongly \(G\)-admissible. This concludes the proof.
\end{proof}

\begin{corollary}\label{cor-ice-folding}
	Let \(\chi_q(L(m))\) be a cluster monomial belonging to an orbit-cluster of \(\mathcal{A}_\infty\). The folded cluster variable satisfies
	\begin{align*}
		\psi(\chi_q(L(m))) = \phi_{2n}(\chi_q(L(m))) = \chi_q(L(\phi_{2n}(m))).
	\end{align*}
\end{corollary}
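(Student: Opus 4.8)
The first displayed equality is already \Cref{theorem-ice-folding} for a single cluster variable; the new content is the identification of the folded cluster variable with $\chi_q(L(\phi_{2n}(m)))$, and the passage from cluster variables to cluster monomials. I would first establish the statement for a single non-frozen (or frozen) cluster variable $\chi_q(L(m_r))$ lying in an orbit-cluster. By \Cref{theorem-ice-folding} the folded cluster variable $\psi(\chi_q(L(m_r))) = \phi_{2n}(\chi_q(L(m_r)))$ is a cluster variable of $\mathcal{A}_{2n,\tor}$, hence by \Cref{tor-cluster} it equals $\chi_q(L(m_r'))$ for a unique dominant monomial $m_r' \in \mathcal{M}'$, whose highest dominant monomial is $m_r'$ itself. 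It then suffices to identify $m_r'$ with $\phi_{2n}(m_r)$.

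For this I would compare highest dominant monomials via the Nakajima order. Writing $\chi_q(L(m_r)) = \sum_{m' \le m_r} a_{m'} m'$ with $a_{m_r} = 1$ by \Cref{corollary-order}, and using the convention $\phi_{2n}(A_{j,a}) = A_{[j],a}$, every monomial appearing in $\phi_{2n}(\chi_q(L(m_r)))$ is of the form $\phi_{2n}(m')$ with $m'm_r^{-1} \in \mathbb{Z}[A_{j,a}^{-1}]$, hence $\phi_{2n}(m')\phi_{2n}(m_r)^{-1} \in \mathbb{Z}[A_{[j],a}^{-1}]$, i.e. $\phi_{2n}(m') \le \phi_{2n}(m_r)$ in $\Asub{2n}$. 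Moreover $\phi_{2n}(m_r)$ is again dominant, since $u_{[j],a}(\phi_{2n}(m_r)) = \sum_{j' \equiv [j]} u_{j',a}(m_r) \ge 0$, and it occurs with coefficient $1$. Therefore $\phi_{2n}(m_r)$ is the unique maximal dominant monomial of $\phi_{2n}(\chi_q(L(m_r))) = \chi_q(L(m_r'))$, so $m_r' = \phi_{2n}(m_r)$ (in particular $\phi_{2n}(m_r) \in \mathcal{M}'$). This proves $\psi(\chi_q(L(m_r))) = \phi_{2n}(\chi_q(L(m_r))) = \chi_q(L(\phi_{2n}(m_r)))$ for each cluster variable of the orbit-cluster.

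Finally I would extend this multiplicatively. An arbitrary cluster monomial of $\mathcal{A}_\infty$ lying in an orbit-cluster $X$ is $\chi_q(L(m)) = \prod_r \chi_q(L(m_r))^{n_r}$, where the $\chi_q(L(m_r))$ are the cluster variables of $X$ (by \Cref{slinfty-cluster} and the monoidal categorification of $\mathcal{C}_1(\uqslinf)$, so that $m = \prod_r m_r^{n_r}$ in $\mathcal{M}_\infty$). Since $\psi$ and $\phi_{2n}$ are ring homomorphisms agreeing on every $\chi_q(L(m_r))$ by the previous step, they agree on $\chi_q(L(m))$, and
\[
	\phi_{2n}(\chi_q(L(m))) = \prod_r \chi_q\bigl(L(\phi_{2n}(m_r))\bigr)^{n_r}.
\]
By \Cref{proposition-cluster-folding} the orbit-cluster $X$ folds onto a single cluster $\psi(X)$ of $\mathcal{A}_{2n,\tor}$, so all the cluster variables $\chi_q(L(\phi_{2n}(m_r)))$ lie in one common cluster; hence by the multiplicativity part of \Cref{tor-cluster} the right-hand side equals $\chi_q\bigl(L(\prod_r \phi_{2n}(m_r)^{n_r})\bigr) = \chi_q(L(\phi_{2n}(m)))$, which completes the proof. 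The main obstacle is the middle paragraph: pinning down, through the $k_\omega$-bookkeeping in the definition of $\phi_{2n}$, that the folded cluster variable is labelled by exactly the dominant monomial $\phi_{2n}(m_r)$ and that this monomial lies in $\mathcal{M}'$, so that \Cref{tor-cluster} can be invoked; the remaining steps are formal manipulations with ring homomorphisms and the folding of clusters.
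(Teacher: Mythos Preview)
Your proof is correct and follows essentially the same route as the paper: the key step is the identification of $m'$ with $\phi_{2n}(m)$ via the Nakajima order, using that $\phi_{2n}(A_{i,a}) = A_{[i],a}$ so that $\phi_{2n}(m)$ is the maximal monomial of $\phi_{2n}(\chi_q(L(m)))$, which is exactly the paper's argument. You additionally spell out the passage from cluster variables to cluster monomials using \Cref{proposition-cluster-folding} and the multiplicativity in \Cref{tor-cluster}; the paper's terse proof leaves this step implicit (and the wording of the corollary, with ``cluster monomial'' followed by ``The folded cluster variable'', is slightly loose on this point), so your extra paragraph is a welcome clarification rather than a different approach.
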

\begin{proof}
	We only need to verify the last equality. \par
	From \Cref{tor-cluster} and \Cref{theorem-ice-folding}, we know that 
	\begin{align*}
		\phi_{2n}(\chi_q(L(m))) = \chi_q(L(m'))
	\end{align*} 
	for some \(m' \in \mathcal{M}'\). But \(\phi_{2n}(A_{i,a}) = A_{[i],a}\) for any \(i\in \mathbb{Z}\) and \(a \in \mathbb{C}^*\). In particular, \(\phi_{2n}(m)\) is a maximal monomial of \(L(m')\) with respect to the Nakajima partial order. The equality \(m' = \phi_{2n}(m)\) then follows.
\end{proof}

An immediate consequence is the following. 

\begin{corollary}\label{cor-new-cases}
	The simple \(\uqslinf\)-modules which correspond to cluster monomials of \(\mathcal{A}_\infty\) satisfy \Cref{conj-fold}. \qed
\end{corollary}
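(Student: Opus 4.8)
The plan is to read this off directly from \Cref{cor-ice-folding}. Let $L(m)$ be a simple $\uqslinf$-module whose class $\chi_q(L(m))$ is a cluster monomial belonging to an orbit-cluster of $\mathcal{A}_\infty$. Then \Cref{cor-ice-folding} gives
\[
\phi_{2n}(\chi_q(L(m))) = \chi_q(L(\phi_{2n}(m))),
\]
and the right-hand side is by definition the $q$-character of the \emph{actual} integrable $\uqtor{2n}$-module $L(\phi_{2n}(m))$ (this module is genuine, and in fact simple, because $\phi_{2n}(m)$ is a dominant monomial of $\Asub{2n}$ lying in $\mathcal{M}'$, by \Cref{tor-cluster}). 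This is precisely the statement of \Cref{conj-fold} for $L(m)$, so there is nothing more to prove.

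The one point to keep straight is the class of modules covered by \Cref{cor-new-cases}: it is exactly those whose $q$-characters are cluster monomials supported inside a single orbit-cluster, i.e. products $\prod_v X_v^{c_v}$ of cluster variables of an orbit-cluster $X$ of $\mathcal{A}_\infty$. For such monomials $\phi_{2n}$ distributes over the product (since on an orbit-cluster it coincides with the multiplicative homomorphism $\psi$, by \Cref{theorem-ice-folding}) and lands on the cluster monomial $\prod_v \tilde X_{[v]}^{c_v}$ of a single cluster $\tilde X$ of $\mathcal{A}_{2n,\tor}$ (by \Cref{proposition-cluster-folding}); combining this with \Cref{tor-cluster} recovers the displayed identity and its interpretation as the $q$-character of a simple $\uqtor{2n}$-module.

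There is no genuine obstacle: the substantive work — showing that the folded $q$-character lands on an honest simple module rather than a merely virtual combination, and that $\phi_{2n}$ is a folding map of the two cluster structures — has already been carried out in \Cref{theorem-ice-folding} (strong global foldability of the ice quiver $\Gamma_\infty$) and in \Cref{tor-cluster} (the monoidal categorification of $\mathcal{C}_1(\mathcal{M}')$ due to Nakajima). \Cref{cor-new-cases} is only their formal combination, which is why it is recorded with no separate argument.
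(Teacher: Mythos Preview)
Your proposal is correct and matches the paper's approach: the corollary is recorded with a bare \qed\ because it is meant as an immediate consequence of \Cref{cor-ice-folding} (together with \Cref{tor-cluster}), exactly as you explain. Your added clarification about the intended scope---cluster monomials supported in a single orbit-cluster, consistent with the introduction and with \Cref{cor-ice-folding}---is apt, since Section~\ref{further} shows that not every cluster variable of $\mathcal{A}_\infty$ lies in an orbit-cluster for a fixed $n$.
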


\section{Further remarks and perspectives}\label{further}
We recall the notation of \Cref{C1infty}. To an almost positive root of \(A_\infty\), we associate a monomial \(m_\alpha \in \Asub{\infty}\) as follows. We set
\begin{align*}
	m_{-\alpha_i} = \begin{cases}
		Y_{i,q^{\xi(i)+2}} & i \in (I_\infty)_0 \\
		Y_{i,q^{\xi(i)}} & i \in (I_\infty)_1
	\end{cases} && \text{and} && m_{\alpha_i} = \begin{cases}
		Y_{i,q^{\xi(i)}} & i \in (I_\infty)_0 \\
		Y_{i,q^{\xi(i)+2}} & i \in (I_\infty)_1
	\end{cases}
\end{align*}
for the negative and positive simple roots, and \(m_\alpha = \prod_{k=i}^{j}m_{\alpha_k}\) for a positive root \(\alpha = \sum_{k=i}^{j}\alpha_k\).
To make notation easier, for \(i \leq j\), we will denote by \(\alpha_{i,j}\) the root \(\sum_{k=i}^{j}\alpha_k\). When \(n\) is fixed, we will write \(\overline{\zeta} \coloneq \phi_{2n}(\zeta)\) for \(\zeta \in \mathcal{Y}_\infty\). Also, we will denote by \(\chi_q^\infty(m)\) the \(q\)-character of the simple \(\uqslinf\)-module \(L(m)\) and by \(\chi_q^\tor(\overline{m})\) the \(q\)-character of the simple \(\uqtor{2n}\)-module \(L(\overline{m})\). \par
For a given \(n\), using the geometric model of \Cref{folding-theorem} and \Cref{stable-arc}, it is not hard to see that \(\chi_q^\infty (m_{\alpha})\) is a cluster variable of \(\mathcal{A}_\infty\) belonging to an orbit-cluster if and only if one of the following happens:
\begin{enumerate}
	\item the root \(\alpha\) has odd length,
	\item the root \(\alpha\) has even length smaller than \(2n\). 
\end{enumerate}
It follows from \Cref{theorem-ice-folding} and \Cref{cor-ice-folding} that the cluster variables of \(\mathcal{A}_{2n,\tor}\) are exactly \(\chi_q^\tor (\overline{m_{\alpha}})\) for \(\alpha\) of this form. But what is \(\chi_q^\tor(\overline{m_{\alpha_{1,2n}}})\) is not clear from this description. We expect it behaves differently, because \(\overline{m_{\alpha_{1,2n}}}\) corresponds, under our description, to the imaginary root of \(A_{2n-1}^{(1)}\).
\begin{proposition}
	Suppose \(n \geq 3\). Then
	\begin{align*}
		\overline{\chi_q^\infty(m_{\alpha_{1,2n}})} = \chi_q^\tor(\overline{m_{\alpha_{1,2n}}}) + \chi_q^\tor(\overline{m_{\alpha_{3,2n-2}}})\cdot f,
	\end{align*}
	where \(f\) is some cluster monomial in frozen variables of \(\mathcal{A}_{2n,\tor}\).
\end{proposition}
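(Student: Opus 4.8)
The plan is to realize $\chi_q^\infty(m_{\alpha_{1,2n}})$ through a cluster exchange relation in $\mathcal{A}_\infty$ whose remaining terms all lie in orbit-clusters, to fold that relation via $\phi_{2n}$, and then to recognise the outcome inside $\mathcal{A}_{2n,\tor}$. First I would produce the relation. In the geometric model of the cluster algebra of type $A_\infty$ from the proof of \Cref{folding-theorem} (arcs of the infinite ribbon), the arc for $\alpha_{1,2n-1}$ and the arc for $\alpha_{2,2n}$ are the two diagonals of a quadrilateral, and the resulting Ptolemy relation is
\[
\chi_q^\infty(m_{\alpha_{1,2n-1}})\,\chi_q^\infty(m_{\alpha_{2,2n}}) \;=\; \chi_q^\infty(m_{\alpha_{1,2n}})\,\chi_q^\infty(m_{\alpha_{2,2n-1}}) \;+\; g,
\]
where $g$ is a monomial in the frozen variables of $\mathcal{A}_\infty$ (the product of the two boundary-edge variables of the quadrilateral, which moreover lie in a single $G$-orbit). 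Here $\alpha_{1,2n-1}$ and $\alpha_{2,2n}$ have odd length and $\alpha_{2,2n-1}$ has even length $2n-2<2n$, so by the criterion recalled just before the statement, $\chi_q^\infty(m_{\alpha_{1,2n-1}})$, $\chi_q^\infty(m_{\alpha_{2,2n}})$ and $\chi_q^\infty(m_{\alpha_{2,2n-1}})$ are cluster variables of $\mathcal{A}_\infty$ lying in orbit-clusters, whereas $\chi_q^\infty(m_{\alpha_{1,2n}})$ is not — it is precisely the one we want.

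Next I would apply $\phi_{2n}$, which is a ring homomorphism $\mathcal{R}(\mathcal{C}_1(\uqslinf))\to\mathcal{R}(\mathcal{C}_1(\mathcal{M}'))$ sending $f_i$ to $f_{[i]}$ and, by \Cref{cor-ice-folding}, sending each orbit-cluster variable $\chi_q^\infty(m_\beta)$ to $\chi_q^\tor(\overline{m_\beta})$. The relation becomes
\[
\chi_q^\tor(\overline{m_{\alpha_{1,2n-1}}})\,\chi_q^\tor(\overline{m_{\alpha_{2,2n}}}) \;=\; \overline{\chi_q^\infty(m_{\alpha_{1,2n}})}\cdot \chi_q^\tor(\overline{m_{\alpha_{2,2n-1}}}) \;+\; \overline{g},
\]
with $\overline g$ a monomial in the frozen variables of $\mathcal{A}_{2n,\tor}$; since $\mathcal{F}_{2n,\tor}$ is a domain and $\chi_q^\tor(\overline{m_{\alpha_{2,2n-1}}})\neq 0$, this determines $\overline{\chi_q^\infty(m_{\alpha_{1,2n}})}$ uniquely.

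It then suffices to check that $\chi_q^\tor(\overline{m_{\alpha_{1,2n}}})+\chi_q^\tor(\overline{m_{\alpha_{3,2n-2}}})\,f$ satisfies the same relation for a suitable frozen monomial $f$. For this I would compute the two products of $\uqtor{2n}$ cluster variables using \Cref{tor-cluster}: the arcs attached to $\overline{m_{\alpha_{1,2n-1}}}$ and $\overline{m_{\alpha_{2,2n}}}$ are loops around the core of the annulus, so their product has exactly two composition factors — one the frozen $\overline g$, and one through which the imaginary class $\chi_q^\tor(\overline{m_{\alpha_{1,2n}}})$ enters — while the nestedness of the remaining intervals (for instance $[3,2n-2]\subset[2,2n-1]$) keeps the other products simple. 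Comparing highest monomials, using the monomial identities $m_{\alpha_{1,2n-1}}m_{\alpha_{2,2n}}=m_{\alpha_{1,2n}}m_{\alpha_{2,2n-1}}$ and the analogous rearrangement involving $\alpha_{3,2n-2}$, then pins down $f$, and dividing by $\chi_q^\tor(\overline{m_{\alpha_{2,2n-1}}})$ gives the claim.

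The hard part will be this last step: the explicit decomposition in $\mathcal{R}(\mathcal{C}_1(\mathcal{M}'))$ of the products of simple $\uqtor{2n}$-modules attached to crossing loops in the annulus, together with the verification that under $\phi_{2n}$ no monomial of $\chi_q^\infty(m_{\alpha_{1,2n}})$ strictly below $m_{\alpha_{1,2n}}$ folds onto $\overline{m_{\alpha_{1,2n}}}$ — so that the leading term of $\overline{\chi_q^\infty(m_{\alpha_{1,2n}})}$ still has multiplicity one and $L(\overline{m_{\alpha_{1,2n}}})$ genuinely occurs. Here one uses that $\chi_q^\infty(m_{\alpha_{1,2n}})$ is a thin $q$-character admitting an explicit combinatorial description, and that $\overline{\chi_q^\infty(m_{\alpha_{1,2n}})}$ is a virtual $q$-character by \cite{H-folding}; everything else is formal, given \Cref{cor-ice-folding} and the geometric model.
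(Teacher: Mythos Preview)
Your approach has a genuine gap at the verification step, and it is not just a matter of filling in details.

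After folding, your relation reads
\[
\chi_q^\tor(\overline{m_{\alpha_{1,2n-1}}})\,\chi_q^\tor(\overline{m_{\alpha_{2,2n}}})
\;=\;
\overline{\chi_q^\infty(m_{\alpha_{1,2n}})}\cdot \chi_q^\tor(\overline{m_{\alpha_{2,2n-1}}}) \;+\; \overline{g}.
\]
To check that the candidate \(C=\chi_q^\tor(\overline{m_{\alpha_{1,2n}}})+\chi_q^\tor(\overline{m_{\alpha_{3,2n-2}}})\,f\) solves this, you must compute either the left-hand side or the product \(\chi_q^\tor(\overline{m_{\alpha_{1,2n}}})\cdot \chi_q^\tor(\overline{m_{\alpha_{2,2n-1}}})\) independently. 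Neither is available. \Cref{tor-cluster} tells you only when a product of toroidal cluster variables is simple; it gives no decomposition otherwise, and the two odd-length arcs \(\overline{\alpha_{1,2n-1}}\), \(\overline{\alpha_{2,2n}}\) are not loops but bridging arcs in the annulus whose product you have no tool to decompose. On the right-hand side, \(\overline{m_{\alpha_{1,2n}}}\) is precisely the non-cluster monomial whose \(q\)-character you are trying to determine, so \Cref{tor-cluster} does not apply to \(\chi_q^\tor(\overline{m_{\alpha_{1,2n}}})\cdot \chi_q^\tor(\overline{m_{\alpha_{2,2n-1}}})\) at all. The argument is therefore circular: the ``hard part'' you defer to the end is the whole problem.

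The paper avoids this by never letting the unknown appear multiplied by a nontrivial cluster variable. It folds one-step exchange relations \eqref{exc1}--\eqref{exc4} in which \(\chi_q^\infty(m_{\alpha_{1,2n}})\) occurs as an isolated summand; the other terms are products of orbit-cluster variables, hence fold to known simple toroidal classes by \Cref{cor-ice-folding}. Producing several such folded identities with the same left-hand side but different right-hand sides first forces \(\overline{\chi_q^\infty(m_{\alpha_{1,2n}})}\) to be an actual (non-virtual) character, then exhibits \(\chi_q^\tor(\overline{m_{\alpha_{3,2n-2}}})\cdot f\) as a summand, and finally bounds the number of summands from above by three and rules out three by a contradiction. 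Your single quadratic relation cannot replace this comparison of several linear ones.
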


\begin{proof}
	We write down a few standard exchange relations in \(\mathcal{A}_\infty\), with \(i + 2 \leq j\):
	\begin{align}
		\chi_q^\infty(m_{\alpha_{i,j}})\cdot\chi_q^\infty(m_{\alpha_{j+1}}) &= \chi_q^\infty(m_{\alpha_{i,j+1}}) + \chi_q^\infty(m_{\alpha_{i,j-2}})\cdot \chi_q^\infty(m_{-\alpha_{j+2}}) \cdot f, \label{exc1}\\
		\chi_q^\infty(m_{\alpha_{i-1}})\cdot\chi_q^\infty(m_{\alpha_{i,j}}) &= \chi_q^\infty(m_{\alpha_{i-1,j}}) + \chi_q^\infty(m_{\alpha_{i+2,j}})\cdot \chi_q^\infty(m_{-\alpha_{i-2}}) \cdot f, \label{exc2}\\
		\chi_q^\infty(m_{\alpha_{i,j}})\cdot\chi_q^\infty(m_{-\alpha_{j}}) &= \chi_q^\infty(m_{\alpha_{i,j-1}})\cdot f + \chi_q^\infty(m_{\alpha_{i,j-2}})\cdot \chi_q^\infty(m_{-\alpha_{j+1}}) \cdot f, \label{exc3}\\
		\chi_q^\infty(m_{-\alpha_{i}})\cdot\chi_q^\infty(m_{\alpha_{i,j}}) &= \chi_q^\infty(m_{\alpha_{i+1,j}})\cdot f + \chi_q^\infty(m_{\alpha_{i+2,j}})\cdot \chi_q^\infty(m_{-\alpha_{i-1}}) \cdot f. \label{exc4}
	\end{align}
	Here \(f\) stands for some cluster monomial in frozen variables. Applying \(\phi_{2n}\) to equations \eqref{exc1} with \(i=1\) and \(j=2n-1\) and \eqref{exc4} with \(i=1\) and \(j=2n-3\), we get
	\begin{align*}
		\chi_q^\tor(\overline{m_{\alpha_{1,2n-1}}})\cdot \chi_q^\tor(\overline{m_{\alpha_{2n}}}) = \overline{\chi_q^\infty(m_{\alpha_{1,2n}})} + \chi_q^\tor(\overline{m_{\alpha_{1,2n-3}}})\cdot\chi_q^\tor(L(\overline{m_{-\alpha_{2n+1}}}))\cdot \overline{f}
	\end{align*}
	and
	\begin{align*}
		\chi_q^\tor(\overline{m_{\alpha_{1,2n-3}}})\cdot \chi_q^\tor(\overline{m_{-\alpha_{1}}}) = \chi_q^\tor(\overline{m_{\alpha_{2,2n-3}}})\cdot\overline{f} + \chi_q^\tor(\overline{m_{\alpha_{3,2n-3}}})\cdot\chi_q^\tor(L(\overline{m_{-\alpha_{0}}}))\cdot \overline{f}.
	\end{align*}
	But as \(\chi_q^\tor(L(\overline{m_{-\alpha_{2n+1}}})) = \chi_q^\tor(L(\overline{m_{-\alpha_{1}}}))\), we can combine the two equations to have
	\begin{multline}\label{imchar}
		\chi_q^\tor(\overline{m_{\alpha_{1,2n-1}}})\cdot \chi_q^\tor(\overline{m_{\alpha_{2n}}}) = \\ \overline{\chi_q^\infty(m_{\alpha_{1,2n}})} + \chi_q^\tor(\overline{m_{\alpha_{2,2n-3}}})\cdot\overline{f} + \chi_q^\tor(\overline{m_{\alpha_{3,2n-3}}})\cdot\chi_q^\tor(L(\overline{m_{-\alpha_{0}}}))\cdot \overline{f}.
	\end{multline}
	A similar argument using \eqref{exc2} and \eqref{exc3} gives
	\begin{multline*}
		\chi_q^\tor(\overline{m_{\alpha_{1}}})\cdot\chi_q^\tor(\overline{m_{\alpha_{2,2n}}})  \\ = \overline{\chi_q^\infty(m_{\alpha_{1,2n}})} + \chi_q^\tor(\overline{m_{\alpha_{4,2n-1}}})\cdot\overline{f} + \chi_q^\tor(\overline{m_{\alpha_{4,2n-2}}})\cdot\chi_q^\tor(L(\overline{m_{-\alpha_{1}}}))\cdot \overline{f}.
	\end{multline*}
	\emergencystretch 3em The \(q\)-characters \(\chi_q^\tor(\overline{m_{\alpha_{2,2n-3}}})\cdot\overline{f}\), \(\chi_q^\tor(\overline{m_{\alpha_{3,2n-3}}})\cdot\chi_q^\tor(L(\overline{m_{-\alpha_{0}}}))\cdot \overline{f}\), \(\chi_q^\tor(\overline{m_{\alpha_{4,2n-1}}})\cdot\overline{f}\) and \(\chi_q^\tor(\overline{m_{\alpha_{4,2n-2}}})\cdot\chi_q^\tor(L(\overline{m_{-\alpha_{1}}}))\cdot \overline{f}\) are all \(q\)-characters of pairwise non-isomorphic simple \(\uqtor{2n}\)-modules (\Cref{tor-cluster}). We deduce that \(\overline{\chi_q^\infty(m_{\alpha_{1,2n}})}\) is the \(q\)-character of an actual \(\uqtor{2n}\)-module. Certainly, \(\overline{\chi_q^\infty(m_{\alpha_{1,2n}})}\) contains \(\chi_q^\tor(\overline{m_{\alpha_{1,2n}}})\). Let us find all its composition factors. \par
	By computing \eqref{exc2} with \(i = 1\) and \(j = 2n - 1\), and \eqref{exc3} with \(i=3\) and \(j=2n-1\), we can obtain (similar to the computation before)
	\begin{multline*}
		\chi_q^\tor(\overline{m_{\alpha_{1,2n-1}}})\cdot \chi_q^\tor(\overline{m_{\alpha_{2n}}}) = \\ \overline{\chi_q^\infty(m_{\alpha_{0,2n-1}})} + \chi_q^\tor(\overline{m_{\alpha_{3,2n-2}}})\cdot\overline{f} + \chi_q^\tor(\overline{m_{\alpha_{3,2n-3}}})\cdot\chi_q^\tor(L(\overline{m_{-\alpha_{0}}}))\cdot \overline{f}.
	\end{multline*}
	Comparing this expression with \eqref{imchar}, we can deduce that \(\chi_q^\tor(\overline{m_{\alpha_{3,2n-2}}})\cdot\overline{f}\) is a summand of \(\overline{\chi_q^\infty(m_{\alpha_{1,2n}})}\). Similar to the previous arguments, we can obtain the equations
	\begin{multline*}
		\overline{\chi_q^\infty(m_{\alpha_{1,2n}})}\cdot \chi_q^\tor(\overline{m_{-\alpha_1}}) = \\ \chi_q^\tor(\overline{m_{\alpha_{2,2n}}})\cdot \overline{f} +  \chi_q^\tor(\overline{m_{\alpha_{3,2n-1}}})\cdot \overline{f} +  \chi_q^\tor(\overline{m_{\alpha_{3,2n-2}}})\cdot \chi_q^\tor(\overline{m_{-\alpha_1}}) \cdot \overline{f}
	\end{multline*}
	and
	\begin{multline*}
		\overline{\chi_q^\infty(m_{\alpha_{1,2n}})}\cdot \chi_q^\tor(\overline{m_{-\alpha_{2n}}}) = \\ \chi_q^\tor(\overline{m_{\alpha_{1,2n-1}}})\cdot \overline{f} +  \chi_q^\tor(\overline{m_{\alpha_{2,2n-2}}})\cdot \overline{f} +  \chi_q^\tor(\overline{m_{\alpha_{3,2n-2}}})\cdot \chi_q^\tor(\overline{m_{-\alpha_{2n}}}) \cdot \overline{f}.
	\end{multline*}
	We can deduce from that that \(\overline{\chi_q^\infty(m_{1,2n})}\) has at most three summands. If it had exactly three, this would implies
	\begin{align*}
		\chi_q^\tor(\overline{m_{-\alpha_{2n}}})\cdot \chi_q^\tor(\overline{m_{\alpha_{3,2n-1}}})\cdot \overline{f} =  \chi_q^\tor(\overline{m_{-\alpha_1}})\cdot  \chi_q^\tor(\overline{m_{\alpha_{2,2n-2}}})\cdot \overline{f},
	\end{align*}
	which is absurd. This completes the proof.
\end{proof}
Similar computations give
\begin{fact}
	Let \(i,j \in \mathbb{Z}\) be such that \(j-i\leq 4n-2\). Then \(\overline{\chi_q^\infty(m_{\alpha_{i,j}})}\) is the \(q\)-character of an actual \(\uqtor{2n}\)-module. In particular, the simple \(\uqslinf\)-module \(L(m_{\alpha_{i,j}})\) satisfy \Cref{conj-fold}.
\end{fact}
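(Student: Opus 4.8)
\emph{Proof plan.} The assertion depends on $\alpha_{i,j}$ only through its length $\ell:=j-i+1$ and the parity of $i$, so it is natural to organise the argument by $\ell$. If $\ell$ is odd, or $\ell$ is even with $\ell<2n$, then by the description recalled just before this Fact (through the ribbon model of \Cref{folding-theorem} and \Cref{stable-arc}) the cluster variable $\chi_q^\infty(m_{\alpha_{i,j}})$ belongs to an orbit-cluster of $\mathcal{A}_\infty$; hence \Cref{theorem-ice-folding} and \Cref{cor-ice-folding} give $\overline{\chi_q^\infty(m_{\alpha_{i,j}})}=\chi_q^\tor(L(\overline{m_{\alpha_{i,j}}}))$, a $q$-character of a simple $\uqtor{2n}$-module, so \Cref{conj-fold} holds for $L(m_{\alpha_{i,j}})$ in this case. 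It remains to treat the even lengths $2n\le\ell\le 4n-2$, which I will do by induction on $\ell$ through the values $2n,2n+2,\dots,4n-2$.

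The base case $\ell=2n$ is the Proposition above: although it is stated for $\alpha_{1,2n}$, the exchange relations \eqref{exc1}--\eqref{exc4} and the geometric ribbon model are invariant under the shift $\Sigma$, so the same computation yields, for every length-$2n$ interval, $\overline{\chi_q^\infty(m_{\alpha_{i,j}})}=\chi_q^\tor(\overline{m_{\alpha_{i,j}}})+\chi_q^\tor(\overline{m_{\alpha_{i+2,j-2}}})\cdot f$ with $f$ a cluster monomial in frozen variables (note $\alpha_{i+2,j-2}$ has even length $2n-4<2n$, so $\chi_q^\tor(\overline{m_{\alpha_{i+2,j-2}}})$ is itself a simple $q$-character by the first step), which is an actual $q$-character; the small values of $n$ for which the Proposition is not literally applicable are checked by the same direct computation. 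For the inductive step ($\ell$ even, $2n<\ell\le 4n-2$) I follow the strategy of the Proposition. I choose two well-chosen pairs of standard exchange relations in $\mathcal{A}_\infty$ of the shapes \eqref{exc1}--\eqref{exc4}, each one arranged so that $\chi_q^\infty(m_{\alpha_{i,j}})$ appears by itself, without a cofactor, in one of the three terms; I apply the ring homomorphism $\phi_{2n}$ and fold the remaining terms: the cluster variables of odd length and the negative simple roots occurring there become $q$-characters of single simple modules by \Cref{cor-ice-folding}; the cluster variables of even length $<\ell$ occurring there become $q$-characters of actual (not necessarily simple) modules by the inductive hypothesis; the frozen monomials are pinned down using \Cref{lemma-frozen-determined}; and, by \Cref{tor-cluster}, the products of pairwise-compatible cluster variables and of cluster variables with frozen monomials that arise are again $q$-characters of simple modules. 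This produces two identities
\[
\chi_q^\tor(A)\cdot\chi_q^\tor(B)=\overline{\chi_q^\infty(m_{\alpha_{i,j}})}+E_1,\qquad
\chi_q^\tor(A')\cdot\chi_q^\tor(B')=\overline{\chi_q^\infty(m_{\alpha_{i,j}})}+E_2,
\]
in which $A,B,A',B'$ and every simple constituent of $E_1$ and $E_2$ are $q$-characters of simple $\uqtor{2n}$-modules.

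To finish, write $\overline{\chi_q^\infty(m_{\alpha_{i,j}})}=P-N$ for its decomposition into nonnegative and negative parts in the basis of $\mathcal{K}_{2n,\tor}$ consisting of $q$-characters of simple $\uqtor{2n}$-modules (this is meaningful because, by \cite{H-folding}, $\overline{\chi_q^\infty(m_{\alpha_{i,j}})}$ lies in the integral span of that basis). Since $\chi_q^\tor(A)\cdot\chi_q^\tor(B)$ is the $q$-character of an actual module and $P$ vanishes at every simple where $N$ does not, comparing coefficients in the first identity shows $N\le E_1$ coefficientwise; the second shows $N\le E_2$. If the two relations are chosen so that $E_1$ and $E_2$ share no simple constituent, this forces $N=0$, so $\overline{\chi_q^\infty(m_{\alpha_{i,j}})}=P$ is an actual $q$-character and \Cref{conj-fold} holds for $L(m_{\alpha_{i,j}})$. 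The main obstacle is precisely this last requirement: producing, for each $\alpha_{i,j}$ with $2n\le\ell\le 4n-2$, two pairs of exchange relations whose error terms $E_1,E_2$ are constituent-disjoint, and verifying the disjointness — which, via \Cref{tor-cluster}, reduces to seeing that the relevant cluster monomials of $\mathcal{A}_{2n,\tor}$ are pairwise distinct. For $\ell=2n$ this is immediate because $E_1$ and $E_2$ are sums of single cluster monomials, as in the Proposition; but as $\ell$ increases the error terms acquire extra constituents coming from the inductively-known characters of shorter even roots, and the bookkeeping must be organised — for instance so that each error term involves only roots of strictly smaller even length together with orbit-cluster variables — to keep the induction self-contained.
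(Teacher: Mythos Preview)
Your plan is essentially correct and matches the paper's approach: the paper itself offers no proof beyond ``Similar computations give'', and your outline is precisely how those computations go --- sort by length $\ell=j-i+1$, dispose of odd and short even lengths via orbit-clusters and \Cref{cor-ice-folding}, then handle even $\ell$ in the range $[2n,4n-2]$ by the two-relation comparison trick of the Proposition, organised as an induction on $\ell$.

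Two small corrections. First, \Cref{lemma-frozen-determined} plays no role here: the frozen monomials in \eqref{exc1}--\eqref{exc4} are already explicit in $\mathcal{A}_\infty$, and applying $\phi_{2n}$ sends them to explicit frozen monomials of $\mathcal{A}_{2n,\tor}$; that lemma is only used inside the proof of \Cref{theorem-ice-folding}. Second, your disjointness check for $E_1$ and $E_2$ is not automatic: after folding, the term $\chi_q^\tor(\overline{m_{\alpha_{i,j-3}}})\cdot\chi_q^\tor(\overline{m_{-\alpha_{j+1}}})\cdot\overline f$ is typically \emph{not} simple (since $[j+1]$ lands in the folded support of $\alpha_{i,j-3}$), so --- exactly as in the Proposition --- you must feed in a second exchange relation (of type \eqref{exc3} or \eqref{exc4}) to break it up before comparing constituents. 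This cascades once or twice as $\ell$ grows, which is the bookkeeping you flag at the end; it is routine but must actually be carried out to complete the argument.
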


However, it is not clear (to the author) if it is possible to deduce the same result for \(\alpha\) of length \(4n\), that is, for \(j-i = 4n-1\). \par
In Example 6.15 of \cite{N11}, Nakajima explains that for \(n=1\), the module \(L(\overline{m_{1,2n}})\) is not real, which in \(q\)-character theory amounts to
\begin{align*}
	\chi_q^\tor(\overline{m_{1,2n}})^2 \neq \chi_q^\tor(\overline{m_{1,4n}}).
\end{align*}
This raises the question whether this is a general phenomenon.
\begin{conjecture}\label{imag-conj}
	The simple \(\uqtor{2n}\)-module \(L(\overline{m_{1,2n}})\) is an imaginary module, that is, \(L(\overline{m_{1,2n}})\ast L(\overline{m_{1,2n}})\) is not simple. 
\end{conjecture}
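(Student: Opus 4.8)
The plan is to deduce \Cref{imag-conj} from \Cref{conj-fold}; the case $n=1$ is already unconditional by Example~6.15 of \cite{N11}, and the small cases $n=2,3$ can be checked by direct computation in $\mathcal{A}_{2n,\tor}$, so assume $n$ is large. Put $m:=m_{\alpha_{1,2n}}\in\Asub{\infty}$. Its class $x[\alpha_{1,2n}]$ is a cluster variable of $\mathcal{A}_\infty$ (\Cref{cluster-polynomial}), so by \Cref{slinfty-cluster} the module $L(m)$ is a real simple $\uqslinf$-module and $\chi_q^\infty(m)^2=\chi_q^\infty(m^2)$. Applying $\phi_{2n}$, which is a ring homomorphism on $\mathcal{R}(\mathcal{C}_1(\uqslinf))$, gives
\begin{align*}
	\overline{\chi_q^\infty(m)}^2=\overline{\chi_q^\infty(m^2)},
\end{align*}
and under \Cref{conj-fold} the right-hand side is the $q$-character of an actual $\uqtor{2n}$-module, hence a combination of $q$-characters of simples with nonnegative multiplicities. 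A direct check of the kind preceding \Cref{imag-conj} shows $\phi_{2n}(m^2)=\overline{m_{1,2n}}^2=\overline{m_{1,4n}}$, so \Cref{imag-conj} is equivalent to $\chi_q^\tor(\overline{m_{1,2n}})^2\neq\chi_q^\tor(\overline{m_{1,4n}})$, which I would prove by contradiction.

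Suppose $L(\overline{m_{1,2n}})$ is real, so $\chi_q^\tor(\overline{m_{1,2n}})^2=\chi_q^\tor(\overline{m_{1,4n}})$. Using the decomposition $\overline{\chi_q^\infty(m_{\alpha_{1,2n}})}=\chi_q^\tor(\overline{m_{1,2n}})+\chi_q^\tor(\overline{m_{\alpha_{3,2n-2}}})\cdot f$ established just above, and writing $S=\chi_q^\tor(\overline{m_{1,2n}})$, $T=\chi_q^\tor(\overline{m_{\alpha_{3,2n-2}}})$, we obtain
\begin{align*}
	\overline{\chi_q^\infty(m^2)}=S^2+2STf+T^2f^2=\chi_q^\tor(\overline{m_{1,4n}})+2STf+T^2f^2.
\end{align*}
Since $T$ is a cluster variable and $f$ a monomial in frozen variables, \Cref{tor-cluster} controls the three summands; tracking highest monomials with respect to the Nakajima order, the dominant monomial $\overline{m_{1,2n}}\cdot\overline{m_{\alpha_{3,2n-2}}}\cdot\hat f$ (with $\hat f$ the dominant monomial attached to $f$) then occurs in $\overline{\chi_q^\infty(m^2)}$ with coefficient exactly $2$, coming from the term $2STf$.

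The crux of the argument, and the step I expect to be the main obstacle, is to contradict this by computing $\overline{\chi_q^\infty(m^2)}=\overline{\chi_q^\infty(m)^2}$ independently from the $\uqslinf$ side: I would expand it as a positive combination of $q$-characters of genuine simple $\uqtor{2n}$-modules using the exchange relations \eqref{exc1}--\eqref{exc4} in $\mathcal{A}_\infty$, exactly in the style of the Proposition preceding the Fact, and then read off the multiplicity of $\overline{m_{1,2n}}\cdot\overline{m_{\alpha_{3,2n-2}}}\cdot\hat f$, which should equal $1$ rather than $2$ — forcing $\chi_q^\tor(\overline{m_{1,2n}})^2\neq\chi_q^\tor(\overline{m_{1,4n}})$. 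The delicate points are: (i) determining $f$, hence $\hat f$, explicitly from the exchange-relation bookkeeping and verifying that $\overline{m_{1,2n}}\cdot\overline{m_{\alpha_{3,2n-2}}}\cdot\hat f$ is dominant and strictly below $\overline{m_{1,4n}}$; (ii) excluding cancellation at this monomial among the summands $S^2$, $2STf$, $T^2f^2$, which requires the refinement of \Cref{corollary-order} (uniqueness of the dominant monomial for each simple involved) together with \Cref{tor-cluster}; and (iii) carrying the $\mathcal{A}_\infty$-side decomposition far enough to be sure no further simple factor reaches that monomial. Once (i)--(iii) are settled the contradiction is immediate, and \Cref{imag-conj} follows from \Cref{conj-fold}.
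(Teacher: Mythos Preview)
First, note that \Cref{imag-conj} is a \emph{conjecture} in the paper; it is not proved unconditionally. What the paper does establish is the subsequent Claim: \Cref{conj-fold} implies \Cref{imag-conj} for \(n\geq 3\). Your proposal is likewise a conditional argument, so the right comparison is between your route to the implication and the paper's.

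There is a genuine gap in your approach. You apply \Cref{conj-fold} to the module \(L(m^2)\) with \(m=m_{\alpha_{1,2n}}\), but this yields nothing: since \(L(m)\) is real in \(\mathcal{A}_\infty\), one has \(\chi_q^\infty(m^2)=\chi_q^\infty(m)^2\), and hence \(\overline{\chi_q^\infty(m^2)}=\overline{\chi_q^\infty(m)}^{\,2}=(S+Tf)^2\) is automatically the \(q\)-character of an actual \(\uqtor{2n}\)-module, with or without \Cref{conj-fold}. More seriously, your proposed contradiction cannot work as stated. The element \((S+Tf)^2\) is a fixed element of \(\mathcal{Y}_{2n}\); the coefficient of the monomial \(\overline{m_{1,2n}}\cdot\overline{m_{\alpha_{3,2n-2}}}\cdot\hat f\) in it does not depend on whether \(S\) happens to be real, and from the \(2STf\) term alone that coefficient is already at least \(2\). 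Any ``independent computation from the \(\uqslinf\) side'' necessarily reproduces \((S+Tf)^2\) and hence the same coefficient, so you cannot obtain \(1\). The assumption that \(S\) is real only reinterprets \(S^2\) as a single simple factor; it does not alter any monomial coefficients, so no monomial-counting argument inside \((S+Tf)^2\) can detect it.

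The paper's route is different in a crucial way: it brings in the \emph{other} \(\uqslinf\)-module \(L(m_{\alpha_{1,4n}})\), whose highest monomial also projects to \(\overline{m_{1,4n}}\) but which is not \(L(m^2)\). The paper shows (by computations of the same flavour as the Proposition you cite) that \(\chi_q^\tor(\overline{m_{1,2n}})^2=\chi_q^\tor(\overline{m_{1,4n}})+\lambda f\) with \(\lambda\in\{0,1,2\}\), and that \(\lambda=0\) holds precisely when \(\overline{\chi_q^\infty(m_{\alpha_{1,4n}})}\) is only virtual. It is to \emph{this} module that \Cref{conj-fold} is applied, forcing \(\lambda\neq 0\). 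If you want to repair your argument, you should replace \(L(m^2)\) by \(L(m_{\alpha_{1,4n}})\) throughout. Also, your assertion that \(n=2\) can be handled by direct computation in \(\mathcal{A}_{2n,\tor}\) is not addressed by the paper (its Claim is stated only for \(n\geq 3\)), so that case would require separate justification.
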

Although some examples of imaginary modules of affinized quantum Kac-Moody algebras are known, for example \cite{Lec03,BriCha,BriMou}, these are hard to find. Proving this conjecture would give a new interesting example for quantum toroidal algebras. \par
The difficulty of computing \(\chi_q^\tor(\overline{m_{1,4n}})\) does not allow us to prove or disprove this conjecture directly, in terms of the arguments above. For \(n \geq 3\) we do manage to show 
\begin{align*}
	\chi_q^\tor(\overline{m_{1,2n}})^2 = \chi_q^\tor(\overline{m_{1,4n}}) + \lambda f 
\end{align*}
with \(\lambda \in \{0,1,2\}\). However, \(\lambda = 0\) if and only if \(\overline{\chi_q^\infty(m_{1,4n})}\) is a virtual \(q\)-character. We can deduce
\begin{claim}
	Assume \Cref{conj-fold} holds. Then \(L(\overline{m_{1,2n}})\) is an imaginary \(\uqtor{2n}\)-module for \(n\geq 3\).
\end{claim}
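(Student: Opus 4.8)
The plan is to derive the claim from the two facts recorded in the paragraph preceding it, using \Cref{conj-fold} only to rule out one numerical possibility. Recall that for $n\geq 3$ we have
\[
\chi_q^\tor(\overline{m_{1,2n}})^2 = \chi_q^\tor(\overline{m_{1,4n}}) + \lambda f
\]
in $\mathcal{K}_{2n,\tor}$, with $\lambda\in\{0,1,2\}$ and $f$ a cluster monomial in frozen variables, and that $\lambda=0$ if and only if $\overline{\chi_q^\infty(m_{1,4n})}$ is a \emph{properly virtual} $q$-character --- that is, not the $q$-character of an actual $\uqtor{2n}$-module. Note that $\overline{m_{1,4n}}=\overline{m_{1,2n}}^2$ (this is what makes the monomial bookkeeping on the right-hand side consistent).

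First I would invoke \Cref{conj-fold}: the element $\overline{\chi_q^\infty(m_{1,4n})}=\phi_{2n}(\chi_q^\infty(m_{1,4n}))$ is then, by hypothesis, the $q$-character of an actual integrable $\uqtor{2n}$-module. Since $\chi_q$ is injective and the classes of the simple modules form a $\mathbb{Z}$-basis of the Grothendieck ring, a genuine $q$-character expands with nonnegative multiplicities in this basis, hence is not properly virtual. By the equivalence quoted above this forces $\lambda\neq 0$, so $\lambda\in\{1,2\}$ and $\lambda f\neq 0$; consequently
\[
\chi_q^\tor(\overline{m_{1,2n}})^2 \neq \chi_q^\tor(\overline{m_{1,4n}}).
\]

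Next I would convert this inequality into the non-simplicity of the fusion square. The module $L(\overline{m_{1,2n}})\ast L(\overline{m_{1,2n}})$ has $q$-character $\chi_q^\tor(\overline{m_{1,2n}})^2$. By the $\uqtor{2n}$ instance of \Cref{Htheorem-char-order}, every monomial of $\chi_q^\tor(\overline{m_{1,2n}})$ is $\leq\overline{m_{1,2n}}$ in the Nakajima order, so every monomial of the product is $\leq\overline{m_{1,2n}}^2=\overline{m_{1,4n}}$; moreover $\overline{m_{1,4n}}$ occurs with multiplicity exactly one, since a product $m'm''$ of two monomials $\leq\overline{m_{1,2n}}$ equals $\overline{m_{1,2n}}^2$ only when $m'=m''=\overline{m_{1,2n}}$ (the quotient $\overline{m_{1,2n}}(m')^{-1}$ being a product of $A_{i,a}^{-1}$ with nonnegative exponents). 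Hence $\overline{m_{1,4n}}$ is the unique maximal dominant monomial of $\chi_q^\tor(\overline{m_{1,2n}})^2$. If the fusion square were simple it would then be $L(\overline{m_{1,4n}})$, and its $q$-character would be $\chi_q^\tor(\overline{m_{1,4n}})$, contradicting the previous step. Therefore $L(\overline{m_{1,2n}})\ast L(\overline{m_{1,2n}})$ is not simple, i.e.\ $L(\overline{m_{1,2n}})$ is imaginary.

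Once the displayed identity and its refinement are in hand, the argument is essentially bookkeeping; the one point demanding care is the reading of ``virtual'' in the equivalence $\lambda=0\Leftrightarrow\overline{\chi_q^\infty(m_{1,4n})}$ virtual, which must mean ``not the $q$-character of an actual $\uqtor{2n}$-module'' --- precisely the property that \Cref{conj-fold} excludes. The genuine difficulty, which this conditional statement deliberately avoids, is to compute $\chi_q^\tor(\overline{m_{1,4n}})$ (equivalently, to decide whether $\lambda=0$) unconditionally; achieving that would settle \Cref{imag-conj} outright.
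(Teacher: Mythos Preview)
Your proof is correct and follows exactly the implicit deduction the paper intends: the claim is stated without explicit proof, immediately after the two facts you invoke, so the paper's ``proof'' is precisely the two-step argument you give --- use \Cref{conj-fold} to rule out $\lambda=0$ via the stated equivalence, then observe that $\lambda\neq 0$ forces the fusion square to be reducible. Your additional care in verifying $\overline{m_{1,4n}}=\overline{m_{1,2n}}^2$ and in justifying that the highest monomial appears with multiplicity one makes the deduction fully rigorous, and your reading of ``virtual'' as ``properly virtual'' is the correct interpretation in context.
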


\section*{References}
\addcontentsline{toc}{section}{References}
\emergencystretch 3em
\printbibliography[heading=none]

\bigskip
\bigskip

Lior Silberberg, \textsc{Université Paris Cité, Sorbonne Université, CNRS, IMJ-PRG, F-75013 Paris, France} \par
\textit{E-mail address}: \texttt{lior.silberberg@imj-prg.fr}

\end{document}